\numberwithin{equation}{section}
\newcommand{\N}{\mathbb{N}}
\newcommand{\R}{\mathbb{R}}
\newcommand{\T}{\mathbb{T}}
\newcommand{\Z}{\mathbb{Z}}
\newcommand{\ep}{\varepsilon}
\newcommand{\bfx}{\mathbf{x}}
\newcommand{\f}{\frac}
\newcommand{\lmd}{\lambda}
\newcommand{\Lmd}{\Lambda}
\newcommand{\cA}{\mathcal{A}}
\newcommand{\cB}{\mathcal{B}}
\newcommand{\cD}{\mathcal{D}}
\newcommand{\cE}{\mathcal{E}}
\newcommand{\cF}{\mathcal{F}}
\newcommand{\cH}{\mathcal{H}}
\newcommand{\cI}{\mathcal{I}}
\newcommand{\cJ}{\mathcal{J}}
\newcommand{\cN}{\mathcal{N}}
\newcommand{\cO}{\mathcal{O}}
\newcommand{\cR}{\mathcal{R}}
\newcommand{\fA}{\mathfrak{A}}
\newcommand{\fS}{\mathfrak{S}}
\newcommand{\psiep}{\psi^\ep}
\newcommand{\phiep}{\phi^\ep}
\newcommand{\tpsi}{\tilde{\psi}^\ep}
\newcommand{\tW}{\tilde{W}}
\newcommand{\Fav}{F_{\text{av}}}
\newcommand{\h}{\f{1}{2}}
\newcommand{\al}{\alpha}
\newcommand{\s}{\sigma}
\newcommand{\om}{\omega}
\newcommand{\e}{\eta}
\newcommand{\ka}{\kappa}
\newcommand{\z}{\zeta}
\newcommand{\Tmax}{T_{\text{max}}}
\newtheorem*{theorem*}{Theorem}
\newtheorem{thm}{Theorem}[section]
\newtheorem*{definition*}{Definition}
\newtheorem{lemma}[thm]{Lemma}
\newtheorem*{lemma*}{Lemma}
\newtheorem{prop}[thm]{Proposition}
\newtheorem{remark}[thm]{Remark}
\newtheorem{cor}[thm]{Corollary}
\title{Global approximation for the cubic NLS with strong magnetic confinement}
\author{\small{JUMPEI KAWAKAMI}
}
\date{}
\begin{document}
\markboth{\centerline{\footnotesize{JUMPEI KAWAKAMI}}}{\centerline{\footnotesize{Global approximation for the cubic NLS with strong magnetic confinement}}}

\maketitle
\renewcommand{\thefootnote}{\fnsymbol{footnote}}
\footnote[0]{\noindent
2020 \textit{Mathematics Subject Classification.} 35Q55, 35B40.

\textit{Key words and phrases.} Nonlinear  Schr\"{o}dinger equation, strong magnetic confinement, high frequency averaging, modified scattering, modified wave operator, global approximation.

}
\renewcommand{\thefootnote}{\arabic{footnote}}
\begin{abstract}
We consider nonlinear Schr\"{o}dinger equation with strong magnetic fields in 3D. This model was derived by R L.~Frank, F.~M\'{e}hats, C.~Sparber in 2017. We prove modified scattering for small initial data and the existence of modified wave operator for small final data. To describe asymptotic behavior of the NLS we use the time-averaged model which was derived by the same authors as "the strong magnetic confinement limit" of the NLS. We construct asymptotic solutions which satisfy both asymptotic in time evolution and convergence in the strong magnetic confinement limit. We also analyze the error between the solution to the NLS and the time-averaged model for the same initial data and obtain global estimates.
\end{abstract}

\section{Introduction}\label{intro}
\subsection{The derivation of NLS-type models}
We consider nonlinear Schr\"{o}dinger equation given by
\begin{equation}\label{ep}
i\partial_t \psiep=\f{1}{\ep^2}\cH \psiep-\f{1}{2}\partial_z^2 \psiep +V(z)\psiep +\lmd |\psiep|^{2\sigma} \psiep \tag{$\ep$-NLS}
\end{equation}
 where $(t,\mathbf{x})\in \R \times \R^3 $, $\lmd=\pm1$, $\sigma \in \mathbb{N}$ and $V$ is a real valued, smooth and subquadratic potential.  We will denote the spatial variables by $\bfx=(x_1,x_2,z)$ and  $x=(x_1,x_2)$.  The parameter $0<\ep \ll 1$ implies the strength of a constant magnetic field. The operator $\cH$ is  the classical Landau Hamiltonian in symmetric gauge:
\[\cH=\f{1}{2}(-i\nabla_x+\f{1}{2}x^\perp)^2=-\f{1}{2}\Delta_x+\f{1}{8}|x|^2-\f{i}{2}x^\perp\cdot\nabla_x , \]
where  $x^\perp=(-x_2,x_1)$.
This model was derived in \cite{Fr} to analyze "the strong magnetic confinement limit" as $\ep \to +0$ for the following NLS:

\begin{equation}\label{eq1.1}
	i\partial_{t}\psi= \frac{1}{2}(-i\nabla_{\bfx}+A^{\ep}(\bfx))^2\psi +V(z)\psi +\beta^{\ep} |\psi|^{2\sigma}\psi,
\end{equation}
which arise e.g. in \cite{BEC}. Here, $\beta^\ep \in \mathbb{R}$ and the vector potential $A^\ep$ is assumed to be given by 
\[ A^{\ep}(\mathbf{x})=\frac{1}{2\ep^2}(-x_2,x_1,0). \] 
The corresponding magnetic field is given by
\[ B^{\ep}=\nabla \times A^{\ep}=\frac{1}{\ep^2}(0,0,1) \in \R^3,\] 
namely, a constant magnetic field in the $z$-direction with field strength $|B^\ep|=\f{1}{\ep^2} \gg 1$.\\

 We will follow the ountline of the derivation of the limiting model in \cite[Section 1]{Fr}.
The operator $\cH$ is essentially self-adjoint on $C_0^\infty(\R^2) \subset L^2(\R^2)$ with pure point spectrum given by
\begin{equation}\label{specn}
\text{spec}\cH=\{n+\h : n \in \N_0\}.
\end{equation}
In view of (\ref{ep}), $\cH$ induces high frequency oscillations in time  within the solution $\psiep$. By considering the profile  filtering these oscillations, we expect the following limit in a strong norm, 
\begin{equation}\label{smc}
  \phiep(t,\bfx):=e^{it\cH/\ep^2}\psiep(t,\bfx)\longrightarrow\phi(t,\bfx) \quad \text{as}\quad \ep \to +0 .
\end{equation}
Then \cite{Fr} derived the limiting model as $\ep\to +0$ formally, which describes the averaged particle dynamics:
\begin{equation}\label{lmt}
i\partial_t \phi = -\h\partial_z^2\phi+V(z)\phi+\lmd \Fav(\phi) \tag{L-NLS}
\end{equation}
\begin{equation}\label{Fav}
\Fav(u)=\f{1}{2\pi}\int_0^{2\pi} e^{i\theta\cH}(|e^{-i\theta\cH}u|^{2\sigma}e^{-i\theta\cH}u)d\theta .
\end{equation}
Averaging of the nonlinearity is consistent with extracting the resonant terms of $\phiep$ in the $x$-direction, cf.~\cite{CR}. 

\subsection{Previous studies}

NLS with strong anisotropic electric or magnetic potentials have widely studied e.g. in \cite{N.Ben2, N.Ben, N.Ben3, Fr, K}. Especially,
\cite{Fr}  studied \eqref{ep} and \eqref{lmt} in 
\[ \Sigma^2=\{ u \in H^2(\R^3): |\bfx|^2u \in L^2(\R^3)  \} ,\]
which is a Banach algebra and make it easy to treat the case with all $\sigma\in \N$. They proved the local well-posedness of (\ref{lmt}) and the locally uniformly convergence of (\ref{smc}) in the $L^2$ topology.   \cite{K} studied these models in 
\[ \Sigma^1=\{ u \in H^1(\R^3): |\bfx|u \in L^2(\R^3) \}.\]
Because $\Sigma^1$ solutions to \eqref{lmt} and \eqref{ep} can be controlled by the conservative quantities of each model, \cite{K} proved global well posedness of the two models in $\Sigma^1$ with some conditions  and improved the convergence results in \cite{Fr}. \cite{K} proved the limit \eqref{smc} holds in the $\Sigma^1$ topology and uniformly in any arbitrarily long finite time interval.\\

The time averaged model \eqref{lmt} is similar structure to the resonant system of NLS with harmonic trapping in the $x$-direction. The resonant system (CR) is derived in \cite{LBL} and studied e.g. in \cite{CR, CR2, CRH} and used e.g. in \cite{MS} as a tool to analyze dynamics of the cubic NLS with partial harmonic oscillator.  \\

Global dynamics of NLS with partial harmonic oscillator is also  studied e.g. in \cite{Scpht, Gdb}.
\subsection{Our aim and strategies}

Our aim is to justify the approximation \eqref{smc} globally and to describe asymptotic behavior of \eqref{ep}. For this aim, we assume $V\equiv0$ and $\sigma=1$, which is the most basic and physically important case. In this case we get 1-dimensional dispersive effect, hence we can use the strategy by \cite{MST, MS}. \cite{MST} studied asymptotic behavior of NLS on $\R\times \T^d$:
\begin{equation}\label{HT}
	 (i\partial_t + \Delta_{x,y})U=|U|^2U \qquad (x,y)\in \R \times \T^d,
\end{equation}
 and \cite{MS} treated the same problem for NLS with partial harmonic oscillator on $\R^{1+d}$:
\begin{equation}\label{onT}
	(i\partial_t -\Delta_{x,y} + |y|^2)U=\ka_0|U|^2U \qquad (x,y)\in \R \times \R^d, \quad  \ka_0=\pm1.
\end{equation}
 These studies used a different type of resonant system from \eqref{lmt}: 
\begin{equation}\label{fr0}
 	i\partial_t G=\cF_z^{-1}[\Fav(\hat{G})]
\end{equation}
where $\hat{\cdot}$ is the Fourier transform and $\cF_z^{-1}$ is the inverse Fourier transform in $z$.
 This system extracts the resonance not only in the $x$ but also in the  $z$-directions (See \cite{MST,MS} and Section \ref{frs} in this paper). We use this resonant system to get a priori estimates for \eqref{ep} and \eqref{lmt} but did not use to describe the asymptotic dynamics of \eqref{ep} because this system has the following merits and demerits.\\

 First, an advantage of this resonant system is that we have good estimates in the norms used in \cite{MS} and this paper. This property is very useful to obtain a priori estimates for solutions to \eqref{HT} and \eqref{onT}. Second, if we use this resonant system as the leading term of NLS as above, we can reduce the study of the NLS to the analysis for the resonant system only on  $y\in \R^d$ or $\T^d$ in a sense, c.f \cite[Section 4.2]{MST} and \cite[Section 3.2]{MS}. These benefits hold true in our case.
  
 On the other hand, we are interested in the strong magnetic confinement limit as $\ep\to+0$.  But \eqref{fr0} extracts the resonance in the $z$-direction, so if we use this system as the leading term of \eqref{ep}, An error independent of $\ep$ will remain. \\
 
 Moreover, \eqref{lmt} has good properties regarding symmetries, conservation laws, etc.(cf. \cite{CR, K}), so we expect the study of the approximation of \eqref{ep} using \eqref{lmt} contributes to elucidation of dynamics of NLS with strong magnetic confinement. 
 For these reasons, we follow \cite{Fr, K} and use \eqref{lmt} as the leading term of \eqref{ep}.\\

 We note that if we dispose of the convergence property as $\ep\to+0$, we have the same results as in \cite{MS}. That is, we can describe the dynamics of \eqref{ep} and \eqref{lmt} by \eqref{fr0}. This is Proposition \ref{main3} in this paper.

\subsection{Main results}

We study \eqref{ep} with the assumptions $\sigma=1$ and $V(z)\equiv0$, i.e.
\begin{equation}\label{ep0}
i\partial_t \psiep=\f{1}{\ep^2}\cH \psiep-\f{1}{2}\partial_z^2 \psiep +\lmd |\psiep|^{2} \psiep  \tag{$\ep$-NLS$_0$}.
\end{equation}
So we impose the same assumptions to \eqref{lmt}:
\begin{equation}\label{lmt0}
i\partial_t \phi = -\h\partial_z^2\phi+\f{\lmd}{2\pi}\int_0^{2\pi}
e^{i\theta\cH}(|e^{-i\theta\cH}\phi|^{2}e^{-i\theta\cH}\phi)d \theta \tag{L-NLS$_0$}.
\end{equation}
From now on, the notation $\Fav$ implies \eqref{Fav} with $\sigma=1$. Moreover, for the sake of simplicity, we assume $\lmd=1$.
In this case we expect the existence of a modified wave operator for final data and formulation of modified scattering for initial data because these models have the cubic nonlinearity and $1$-dimensional dispersive effect $t^{-1/2}$ in the $z$-direction. Hence we have the following results. The definition of $S$ and  $S^+$norms is in Section \ref{othernorms}.  \\

Our first result is the existence of modified wave operator for small final data.

 \begin{thm}\label{main1}
 There exist $\alpha>0$ and $\ep_1=\ep_1(\al)\in (0,1)$ such that the following statement holds. \\
 
 \noindent
 If $\phi_0\in S^+$ satisfies 
 \[ \|\phi_0\|_{S^+}\le \al, \]
 then there exist a global solution to  \eqref{lmt0} $\phi \in C([0,\infty):S^+)$  with $\phi(0)=\phi_0$ and global solutions to \eqref{ep0} $\{\psiep\}_{0<\ep<1} \subset  C([0,\infty):S^+) $ such that 
 \begin{equation}\label{main1-1}
 \|e^{-it\partial^2_z/2}e^{it\cH/\ep^2}\psiep(t)- e^{-it\partial_z^2/2}\phi(t)\|_{S^+}\le C \ep^{\f{1}{20}}(1+t)^{-\f{1}{20}}
 \end{equation}
 holds for all  $t\in [1,\infty)$ and $\ep\in (0,1)$. Moreover it holds
  \begin{equation}\label{main1-2}
 	\|e^{-it\partial^2_z/2}e^{it\cH/\ep^2}\psiep(t)- e^{-it\partial_z^2/2}\phi(t)\|_{S}\le C_\al \ep^{\f{1}{20}}(1+t)^{-\f{1}{20}}
 \end{equation}
  for all  $t\in [0,\infty)$ and $\ep\in (0,\ep_1)$. Here, $C_\al$ is a constant depending on $\al$.
 \end{thm}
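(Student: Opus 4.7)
The plan is to adapt the asymptotic-analysis strategy of \cite{MS} (itself modeled on \cite{MST}), using the resonant system \eqref{fr0} as the organizing intermediate object, while simultaneously tracking the $\ep\to 0$ limit against \eqref{lmt0}. Introduce the profile variables
\[ v^\ep(t) := e^{-it\partial_z^2/2}e^{it\cH/\ep^2}\psiep(t), \qquad w(t) := e^{-it\partial_z^2/2}\phi(t), \]
both of which satisfy nonlinear Duhamel equations in the interaction picture where the quadratic $z$-phase is removed. For $t$ large, only the resonant portion of the nonlinearity survives, and both $v^\ep$ and $w$ should track a solution $G$ of \eqref{fr0} up to the logarithmic-phase correction characteristic of cubic modified scattering on $\R\times\R^d$.

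First I would construct the modified wave operator for \eqref{lmt0} by itself. Given $\phi_0\in S^+$ small, I build $G\in C([0,\infty);S^+)$ solving \eqref{fr0} with $G(0)=\phi_0$ via a contraction in $S^+$, exploiting that $\Fav$ is trilinear and that the $S^+$-norm admits the multilinear estimates set up in Section \ref{othernorms}. Using $G$ composed with a logarithmic time reparametrization as the prescribed asymptotic profile for $w$, I solve the backward Duhamel equation for $w$ on $[T_\al,\infty)$ by a contraction with smallness controlled by $\al$, then extend to $[0,\infty)$ using the global well-posedness of \eqref{lmt0} in $\Sigma^1\supset S^+$ from \cite{K}. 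This yields $\phi$.

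Next, for each $\ep\in(0,1)$, I would run a parallel backward Duhamel argument for $\psiep$, using the same $G$-based ansatz as the prescribed asymptotic profile for $v^\ep$. The crucial point is to estimate in $S^+$ the difference between the $\ep$-dependent nonlinearity $e^{it\cH/\ep^2}\bigl(|e^{-it\cH/\ep^2}u|^{2}e^{-it\cH/\ep^2}u\bigr)$ and $\Fav(u)$: decomposing in the harmonic-oscillator eigenbasis via \eqref{specn}, the non-resonant frequency combinations oscillate at rate $t/\ep^2$, and integration by parts in $t$ produces an $\ep^2$-prefactor that, interpolated against the basic uniform $S^+$-bound and the $(1+t)^{-c}$ decay coming from the modified-scattering structure, delivers the gain $\ep^{1/20}(1+t)^{-1/20}$ of \eqref{main1-1}. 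Running the contractions for $w$ and $v^\ep$ simultaneously with identical ansatz $G$ gives uniform-in-$\ep$ closure.

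To upgrade to \eqref{main1-2} on $[0,\infty)$, I would combine the $[T_\al,\infty)$ bound just obtained with the compact-time $\Sigma^1$ convergence of \cite{K} applied on $[0,T_\al]$; an interpolation between this (non-uniform in $t$) estimate and the $S^+$ a priori bound from global well-posedness yields the $S$-norm statement, absorbing the $T_\al$-dependence into the constant $C_\al$. The main obstacle will be the joint contraction estimate: verifying that the $S^+$ multilinear bounds are simultaneously strong enough to produce the $(1+t)^{-c}$ decay driving modified scattering and to extract the $\ep$-power from the non-resonant $\cH/\ep^2$-oscillations, which requires a careful separation inside $e^{it\cH/\ep^2}$ of the Fourier modes that are resonant with $\Fav$ from those that integrate-by-parts with a gain.
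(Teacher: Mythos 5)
There is a genuine architectural gap: routing both solutions through a common profile $G$ of \eqref{fr0} cannot produce the factor $\ep^{\f{1}{20}}$ in \eqref{main1-1}. The deviation of either profile ($v^\ep$ or $w$ in your notation) from a $G(\pi\ln t)$-type ansatz is governed by the error $\cN_0^t-\f{\pi}{t}\cR$, i.e.\ by the extraction of the resonance in the $z$-direction; this error is completely independent of $\ep$ and decays only like $t^{-33/32}$ (the paper quotes this from \cite{MS}). Triangulating $\|v^\ep-w\|\le\|v^\ep-G_{\mathrm{ansatz}}\|+\|w-G_{\mathrm{ansatz}}\|$ therefore gives at best a time decay with no $\ep$-smallness, which is exactly the obstruction the introduction points out and the reason the paper uses \eqref{fr0} only to obtain the a priori bounds (Proposition \ref{apriori}, via Proposition \ref{nlest2}) and never as the asymptotic leading term. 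The paper instead compares $U^\ep=e^{it\cD^\ep}\psiep$ and $W=e^{-it\partial_z^2/2}\phi$ directly: the difference $F$ is produced as a fixed point of $\Phi(F)(t)=i\int_t^\infty\big(\cN^s[F+W,F+W,F+W]-\cN_0^s[W,W,W]\big)ds$ in a space $\fA$ whose norm already carries the weights $\ep^{-\f{1}{20}}(1+t)^{\f{1}{20}}$, and the only purely-$W$ source term is $\tilde{\cN}^{s,\ep}[W,W,W]$, which is non-resonant in $x$, oscillates at rate $\gtrsim\ep^{-2}$, and yields the $\ep^{\f{1}{13}}$ gain through Proposition \ref{nlest1} (Lemma \ref{fast}), combined with the high-frequency cut Lemma \ref{high}. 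You do mention the integration by parts in the $\cH/\ep^2$-phase, but inside your triangulation that gain is swamped by the $\ep$-independent errors relative to $G$; the $\ep$-power only survives when the two equations are subtracted so that the $z$-resonant parts cancel identically.

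A second concrete problem is the placement of the initial data. Theorem \ref{main1} requires $\phi(0)=\phi_0$, whereas you construct $\phi$ by a backward Duhamel (wave-operator) argument toward the ansatz built from $G$ with $G(0)=\phi_0$; such a construction does not return $\phi(0)=\phi_0$, since the modified wave operator is not the identity at $t=0$. In the paper the prescribed data belongs to the \eqref{lmt0} solution: $W$ solves \eqref{pr} forward in time with $W(0)=\phi_0$, globally and with the bounds \eqref{apw0} from Proposition \ref{apriori}, and it is the family $\{\psiep\}$ that is obtained by the wave-operator-type contraction (so $\psiep(0)\ne\phi_0$ in general, as remarked after the theorem). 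The logarithmic reparametrization is only relevant to Proposition \ref{main3} and plays no role here. The one step of your outline that does match the paper is the last one: the extension of the $S$-norm estimate \eqref{main1-2} from $[1,\infty)$ to $[0,1]$ via the finite-time convergence machinery of \cite{K} (Lemma \ref{3}), at the price of a constant $C_\al$ and a restriction $\ep\in(0,\ep_1)$.
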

 
 Theorem \ref{main1}  claims that for any small solution to \eqref{lmt0} there exist asymptotic solutions to \eqref{ep0} which move in closer to the solution to \eqref{lmt0} as $ t\to +\infty$ and approach uniformity in time as $\ep \to +0$. Note that  $\phi_0 = \psiep(0)$ does not necessarily hold.\\ 

 Our second result is the formulation of modified scattering  and estimates of the limit as $\ep \to +0$ for small solutions. 
  \begin{thm}\label{main2}
 
  There exist $\alpha>0$ and $\ep_1=\ep_1(\al)\in (0,1)$ such that the following statement holds. \\
 
 \noindent
 If $\psi_0\in S^+$ satisfies 
 \[ \|\psi_0\|_{S^+}\le \al, \]
 then for any $\ep\in (0,1)$ a global solution to  \eqref{ep0} $\psiep \in C([0,\infty):S^+)$  with $\psiep(0)=\psi_0$ exists and satisfies the following properties.
 \begin{enumerate}
 	\item
	There exists a solution to \eqref{lmt0} $\phiep \in C([0,\infty):S^+) $ such that 
 	\begin{equation}
 		\|e^{-it\partial^2_z/2}e^{it\cH/\ep^2}\psiep(t)- e^{-it\partial_z^2/2}\phiep(t)\|_{S^+}\le C_\al \ep^{\f{1}{20}}(1+t)^{-\f{1}{20}}
 	\end{equation}
 	holds  for all  $t\in [1,\infty)$ and $\ep\in (0,1)$, and 
 		\begin{equation}
 		\|e^{-it\partial^2_z/2}e^{it\cH/\ep^2}\psiep(t)- e^{-it\partial_z^2/2}\phiep(t)\|_{S}\le C_\al \ep^{\f{1}{20}}(1+t)^{-\f{1}{20}}
 	\end{equation}
 	holds  for all  $t\in [0,\infty)$ and $\ep\in (0,\ep_1)$.
 	
 	\item
 	There exist a global solution to \eqref{lmt0} $\phi \in C([0,\infty):S^+)$ with $\phi(0)=\psi_0$  and $\ep_0>0$ depending on $\psi_0$ such that 
 		\begin{equation}\label{main2-1}
 		\|e^{-it\partial^2_z/2}e^{it\cH/\ep^2}\psiep(t)- e^{-it\partial^2_z/2}\phi(t)\|_{S^+}\le C_{\ep, \psi_0} (1+t)^{C\al^2}, \quad \lim_{\ep\to+0}C_{\ep,  \psi_0}=0
 	\end{equation}
 hold for all $t\in [0,\infty)$ and $\ep\in (0,\ep_0]$. Moreover,  
 	\begin{equation}\label{main2-2}
 		\|e^{-it\partial^2_z/2}e^{it\cH/\ep^2}\psiep(t)- e^{-it\partial^2_z/2}\phi(t)\|_{S}\le C_\al \ep^{\f{1}{20}}(1+t)^{C\al^2},
 	\end{equation}
 holds for  all $t\in [0,\infty)$ and $\ep\in (0,\ep_1]$.
 \end{enumerate}
 Here, $C_\al$ is a constant depending on $\al$ and $C_{\ep,  \psi_0}$ is a constant depending on $\ep$ and $\psi_0$. 
 \end{thm}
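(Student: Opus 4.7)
The plan is to work throughout with the filtered profile
\[ \Uep(t) := e^{-it\partial_z^2/2} e^{it\cH/\ep^2}\psiep(t), \]
which simultaneously removes the high-frequency magnetic oscillation and the linear dispersive evolution in $z$. Its equation reads $i\partial_t\Uep = e^{-it\partial_z^2/2}e^{it\cH/\ep^2}(|\psiep|^2\psiep)$, and decomposing in the Landau eigenbasis of $\cH$ via \eqref{specn} splits the cubic interaction into a resonant part which, after the $\theta$-average in \eqref{Fav}, matches the $\Fav$ nonlinearity of \eqref{lmt0} in the interaction representation, and a non-resonant remainder whose Landau phase oscillates like $e^{i(n_1-n_2+n_3-n_4)t/\ep^2}$ with nonzero integer frequency. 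Integration by parts in $t$ against this oscillation produces an explicit $\ep$-gain, which is the single mechanism driving every $\ep\to+0$ convergence stated in the theorem.

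For Part (i), I would first run a bootstrap in $S^+$ together with the $Z$-type decay norms of \cite{MST, MS} to obtain global existence of $\psiep$ with $\|\Uep(t)\|_{S^+}\lesssim\al$ uniformly in $\ep$. The a priori control is provided by comparing $\Uep$ with the auxiliary system \eqref{fr0}, which is exactly the content of Proposition \ref{main3}; routing through \eqref{fr0} rather than directly through \eqref{lmt0} is advantageous because \eqref{fr0} enjoys the sharp modified-scattering estimates on $\R\times\T$ inherited from \cite{MST, MS}. Once global bounds are in hand, the right-hand side of the equation for $\Uep$ is integrable in $t$ up to logarithmic corrections (the $t^{-1/2}$ dispersive decay in $z$ enters squared in the cubic), so $\Uep$ converges in $S^+$ as $t\to\infty$ to a limit $V^\ep_\infty$. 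A modified wave operator for \eqref{lmt0} alone, extracted from the construction underlying Theorem \ref{main1}, then produces a global \eqref{lmt0} solution $\phiep$ whose interaction profile matches $V^\ep_\infty$ at infinity, and the quantitative rates follow from the same comparison chain that delivered \eqref{main1-1}--\eqref{main1-2}.

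For Part (ii), let $\phi\in C([0,\infty):S^+)$ denote the global \eqref{lmt0} solution with $\phi(0)=\psi_0$, again existing and small by the same a priori theory, and set $W := e^{-it\partial_z^2/2}\phi$ and $D^\ep := \Uep - W$. One writes
\[ i\partial_t D^\ep = \bigl(\cN^\ep(\Uep) - \cN_0(\Uep)\bigr) + \bigl(\cN_0(\Uep) - \cN_0(W)\bigr), \]
where $\cN^\ep$ is the interaction-picture version of the \eqref{ep0} nonlinearity and $\cN_0$ its resonant reduction, namely the interaction-picture $\Fav$ from \eqref{lmt0}. The first bracket is $O(\ep)$ in $S$ after the integration-by-parts gain described above, and the second is Lipschitz in $D^\ep$ thanks to the uniform $S^+$ bootstrap on both $\Uep$ and $W$; a Gronwall argument in $S$ then yields \eqref{main2-2}. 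For the higher-regularity statement \eqref{main2-1}, the same splitting produces $\|D^\ep(t)\|_{S^+}\le C_{\ep,\psi_0}(1+t)^{C\al^2}$ with $C_{\ep,\psi_0}\to 0$ as $\ep\to+0$, but the vanishing is obtained qualitatively — approximate $\psi_0\in S^+$ by smoother data for which the $\ep$-gain absorbs the derivative loss, then pass to the limit by density and uniform continuity of the flow — rather than with an explicit power of $\ep$.

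The main obstacle will be controlling the non-resonant remainder in the higher-regularity norm $S^+$. Integration by parts in $t$ cleanly exchanges the Landau-phase oscillation for an $\ep$ factor, but it also reinstates $\partial_t\Uep$ through the equation itself, costing derivatives that $S^+$ can only just support; the explicit exponent $1/20$ is the outcome of balancing this derivative loss against the $\ep$-gain, simultaneously with the bootstrap in the $Z$-type norms needed to propagate $t^{-1/2}$-decay. The same derivative loss is also what forces \eqref{main2-1} to be merely qualitative while \eqref{main2-2} retains an explicit polynomial rate.
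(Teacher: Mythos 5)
Your plan for part (i) contains a genuine gap at its central step: you claim that the profile $\Uep(t)=e^{-it\partial_z^2/2}e^{it\cH/\ep^2}\psiep(t)$ converges in $S^+$ as $t\to\infty$ to some $V^\ep_\infty$, because the nonlinearity is ``integrable up to logarithmic corrections.'' It is not: after removing the non-resonant oscillations the cubic term still contains the resonant piece, which by Proposition \ref{nlest2} behaves like $\f{\pi}{t}\cR[U^\ep,U^\ep,U^\ep]$, and $t^{-1}$ is not integrable. This is exactly why the asymptotics are \emph{modified} (Proposition \ref{main3}: $\Uep(t)$ tracks the \eqref{fr0} flow at time $\pi\ln t$), and the Corollary following Proposition \ref{main3} states that a small-$S^+$ profile which converges as $t\to\infty$ must be identically zero. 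So the scheme ``take the limit $V^\ep_\infty$, then apply a wave operator for \eqref{lmt0}'' collapses. Moreover, matching $\Uep$ to an \eqref{lmt0} solution through the full resonant system \eqref{fr0} cannot produce the $\ep^{1/20}$ factor in the statement: the error between $\cN_0^t$ and $\f{\pi}{t}\cR$ is independent of $\ep$ (this is precisely why the paper refuses to use \eqref{fr0} as the leading term). The $\ep$-gain lives only in the non-resonant-in-$x$ oscillation $e^{i\om t/2\ep^2}$, quantified in Proposition \ref{nlest1}, and the correct construction is a final-value (contraction) problem for the difference $F=U^\ep-W^\ep$ in the weighted space $\fA$, i.e.\ the proof of Theorem \ref{main1} with the roles of $U^\ep$ and $W$ exchanged, which is what the paper does.

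Your part (ii) is closer in spirit to the paper, but two points you gloss over are where the real work is. First, a Gronwall estimate for $D^\ep=U^\ep-W$ run purely at the $S$ level does not give $(1+t)^{C\al^2}$: since $\|U^\ep(t)\|_S$, $\|W(t)\|_S$ may grow like $t^\delta$, the naive Lipschitz constant is $(1+t)^{-1+\delta}\al^2$ and yields $\exp(C\al^2 t^\delta)$. The paper's Proposition \ref{APW} avoids this by a two-tier argument: an $L^2$-level Gronwall using \eqref{N01} (where the difference enters only in $L^2$), then the interpolation \eqref{interpo} $\|F\|_Z\lesssim\|F\|_{L^2}^{1/8}\|F\|_S^{7/8}$ fed into the $S$-level inequality, closed by a bootstrap. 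Second, all the $t\ge1$ machinery (Proposition \ref{nlest1}, the $X_T/Y_T$ norms) says nothing about $[0,1]$; there the $\ep$-gain must be extracted by integrating the $e^{i\om t/\ep^2}$ phase by parts, which costs two $z$-derivatives through $\partial_t$ of the solution. The paper handles this initial layer with the finite-time averaging result of \cite{K} (Lemma \ref{3} / Proposition \ref{th3s}), using a $P_{\le N}$ truncation of the data to absorb the derivative loss; this is precisely the source of the explicit rate $\ep$ in $S$ and of the merely qualitative constant $C_{\ep,\psi_0}$ in $S^+$ — your density argument points in the right direction, but it must be combined with the anchoring at $t=1$ and the global stability bound (the paper's triangle inequality $\|U^\ep-W\|\le\|U^\ep-W^\ep\|+\|W^\ep-W\|$ with Proposition \ref{APW}), which in turn presupposes the corrected part (i).
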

Theorem \ref{main2} $(i)$ claims that asymptotic behavior of the small solution $\psiep$ to \eqref{ep0} can be described by the solution  $\phiep$ to \eqref{lmt0} which moves in closer to $\psiep$ as $t\to +\infty$ and approaches uniformity in time as $\ep \to +0$. This is good approximation, but $\psi_0 = \phiep(0)$ does not necessarily hold. On the other hand in $(ii)$, the solution $\phi$ to \eqref{lmt0} has the same initial data as $\psiep$ and we consider the error between $\psiep$ and $\phi$. This is the same situation as \cite{Fr, K}. However, unlike \cite{Fr, K} we impose the smallness condition and obtain the global estimates in time. \\

With respect to Theorem \ref{main1} and Theorem \ref{main2} $(i)$, we can prove that small solutions to \eqref{ep0} and \eqref{lmt0} do not scatter. 

\begin{prop}\label{main3}
There exists $\alpha>0$ such that the following statement holds. \\
	  
\noindent
\begin{enumerate}

\item 
If $\psi_0\in S^+$ satisfies 
\[ \|\psi_0\|_{S^+}\le \al, \]
then for any $\ep\in (0,1)$ a global solution to  \eqref{ep0} $\psiep \in C([0,\infty):S^+)$  with $\psiep(0)=\psi_0$ exists and exibits modified scattering to \eqref{fr0}{\rm:} there exists a global solution to \eqref{fr0} $G^\ep(t) \in C([0,\infty),S)$ which satisfies
	\begin{equation*}
	\lim_{t\to\infty}\|e^{-it\partial^2_z/2}e^{it\cH/\ep^2}\psiep(t)- G^\ep(\pi \ln t)\|_{S} =0.
\end{equation*}

\item 
If $\phi_0\in S^+$ satisfies 
\[ \|\phi_0\|_{S^+}\le \al, \]
then a global solution to  \eqref{lmt} $\phi \in C([0,\infty):S^+)$  with $\phi(0)=\phi_0$ exists and exibits modified scattering to \eqref{fr0}{\rm:} there exists a global solution to \eqref{fr0}  $G(t)\in C([0,\infty),S)$ which satisfies
\begin{equation*}
	\lim_{t\to\infty}\|e^{-it\partial^2_z/2}\phi(t)- G(\pi \ln t)\|_{S} =0.
\end{equation*}
\end{enumerate}		  
\end{prop}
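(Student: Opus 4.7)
The plan is to carry out a modified scattering analysis of the filtered profile
\[ U^\ep(t) := e^{-it\partial_z^2/2}e^{it\cH/\ep^2}\psiep(t), \]
in the spirit of \cite{MST, MS}. By the a priori estimates already used in the proof of Theorem~\ref{main2}, one knows that $U^\ep$ is globally bounded in $S^+$ by $C\al$ and polynomially bounded in $S$, so only the asymptotic dynamics remain to be identified. A direct computation gives
\[ i\partial_t U^\ep = e^{-it\partial_z^2/2}e^{it\cH/\ep^2}\bigl(|e^{-it\cH/\ep^2}e^{it\partial_z^2/2}U^\ep|^2\, e^{-it\cH/\ep^2}e^{it\partial_z^2/2}U^\ep\bigr), \]
which, after taking the Fourier transform in $z$ and expanding in Landau eigenfunctions, becomes a quadrilinear expression in $\widehat{U^\ep}$ with two oscillating phases: $e^{-it(n_1-n_2+n_3-n_4)/\ep^2}$ from the Landau filter, and $e^{it(\xi_1^2-\xi_2^2+\xi_3^2-\xi_4^2)/2}$ from the 1D Schr\"odinger propagator.

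I would split this nonlinearity according to three regimes. \emph{(a)} On the Landau-non-resonant sector $n_1-n_2+n_3-n_4\neq 0$ I integrate by parts in $t$ against the $\ep^{-2}$ phase; the resulting remainders are $L^1_t$ in the $S$-norm with $\ep$-dependent constants, which is harmless here since we do not require $\ep$-uniformity. \emph{(b)} On the Landau-resonant, $z$-non-resonant sector (where $\Phi:=\xi_1^2-\xi_2^2+\xi_3^2-\xi_4^2\neq 0$) I integrate by parts in $(\xi_2,\xi_3)$ using $\Phi$; after a suitable cutoff these terms also produce $L^1_t$ remainders in $S$. \emph{(c)} The fully resonant stratum $\{\xi_2=\xi_1,\xi_4=\xi_3\}\cup\{\xi_2=\xi_3,\xi_4=\xi_1\}$ combined with the Landau-resonance $n_1-n_2+n_3-n_4=0$ reassembles, after the standard stationary-phase computation in $(\xi_2,\xi_3)$, into the main term
\[ \frac{\pi}{t}\,\cF_z^{-1}\bigl[\Fav(\widehat{U^\ep(t)})\bigr], \]
exactly as in \cite{CR, MST, MS}.

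Combining these pieces yields
\[ i\partial_t U^\ep(t) = \frac{\pi}{t}\,\cF_z^{-1}\bigl[\Fav(\widehat{U^\ep(t)})\bigr] + \cR^\ep(t), \qquad \|\cR^\ep(t)\|_S \lesssim_\ep (1+t)^{-1-\delta}, \]
for some $\delta>0$. Setting $v^\ep(s):=U^\ep(e^{s/\pi})$ absorbs the $\pi/t$ factor and turns $\cR^\ep$ into an $L^1_s$-remainder in $S$; hence $v^\ep$ is Cauchy in $S$ as $s\to\infty$. Its limit serves as scattering data for \eqref{fr0}, and small-data well-posedness for the resonant system together with continuous dependence produces a global $G^\ep\in C([0,\infty),S)$ solving \eqref{fr0} and obeying $\|U^\ep(t)-G^\ep(\pi\ln t)\|_S \to 0$. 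Part (ii) is proved identically on the filtered profile $e^{-it\partial_z^2/2}\phi(t)$; the Landau filter is unnecessary there because the $\Fav$-averaging is already built into \eqref{lmt0}.

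The principal technical obstacle will be the trilinear $S$-estimates controlling the non-resonant remainders in (a) and (b), in particular $S$-norm bounds for operators with Fourier/Landau multipliers of the form $\ep^2/(n_1-n_2+n_3-n_4)$ and $1/\Phi$. Since $\ep$-uniformity is not required here, the Landau-direction phase is actually a pure gain, and the analysis reduces essentially to the argument of \cite{MS}, adapted to the Landau-eigenfunction expansion in~$x$.
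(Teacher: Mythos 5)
There is a genuine gap at the heart of your argument: the claim that $v^\ep(s):=U^\ep(e^{s/\pi})$ is Cauchy in $S$ as $s\to\infty$ is false, and the construction of $G^\ep$ built on it collapses. After the change of variables, $v^\ep$ satisfies $i\partial_s v^\ep=\cR[v^\ep,v^\ep,v^\ep]+(\text{integrable error})$, and the resonant term $\cR[v^\ep,v^\ep,v^\ep]$ is $O(1)$ in $s$ — it does not decay, so $\partial_s v^\ep$ is not integrable and $v^\ep$ has no limit. This is not a technicality: the Corollary following Proposition \ref{main3} (the analogue of \cite[Corollary 1.4]{MS}) says precisely that if the filtered profile converges, the solution must be identically zero. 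Modified scattering here means comparison with a \emph{moving} solution of \eqref{fr0}, not extraction of a limit to be used as ``scattering data''. The correct construction — which is what the paper invokes by pointing to \cite[Section 5.2]{MS} together with Proposition \ref{nlest2} — is to solve \eqref{fr0} backwards from the data $v^\ep(s_n)$ at a sequence $s_n\to\infty$, and to show that the resulting solutions converge: the Lipschitz/Gronwall bound for the \eqref{fr0} flow grows like $e^{C\al^2 s}$ (cf.\ the estimates in Proposition \ref{APW}), while the error decays like a negative power of $t=e^{s/\pi}$, i.e.\ exponentially in $s$; for $\al$ small the decay beats the growth, the sequence is Cauchy, and its limit is the desired $G^\ep$. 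This competition between $e^{C\al^2 s}$ and the error decay is exactly where the smallness of $\al$ enters, and your proposal has no substitute for it.

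A secondary point: your pointwise remainder bound $\|\cR^\ep(t)\|_S\lesssim_\ep(1+t)^{-1-\delta}$ is stronger than what Proposition \ref{nlest2} actually provides. There the error is split as $\cE_1+\cE_2$ with $\cE_2=\partial_t\cE_3$, and the $S$-norm control is only for time integrals over dyadic intervals (with a $T^{\delta}$ loss in one of the bounds), the piece $\cE_3$ being handled as a boundary term. Since the $S$-norm of the solution itself is allowed to grow like $t^\delta$, one should not expect a uniform pointwise $(1+t)^{-1-\delta}$ bound in $S$; the argument of \cite{MS} is organized around these integrated estimates, and yours should be too. Your decomposition into Landau-non-resonant, $z$-non-resonant and fully resonant sectors is otherwise in the same spirit as the paper's Propositions \ref{nlest1}--\ref{nlest2}, and your observation that $\ep$-uniformity is not needed in part (i), and that the Landau filter is absent in part (ii), is consistent with how Proposition \ref{nlest2} treats both $\cN^t$ and $\cN_0^t$.
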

\begin{proof}
	Follow the proof of \cite[Theorem 1.2]{MS} in  \cite[Section 5.2]{MS}. In the proof we use  Proposition \ref{nlest2} in this paper. 
\end{proof}
	
\begin{cor}
	Let $F(t) \in C([0,\infty),S^+)$ denote either $e^{-it\partial^2_z/2}e^{it\cH/\ep^2}\psiep(t)$ or $e^{-it\partial^2_z/2}\phi(t)$, where $\psiep(t)$ is a solution to \eqref{ep0} and $\phi(t)$ is a solution to \eqref{lmt0}. If $F(t)$ converges as $t\to\infty$ and satisfies $\|F(0)\|_{S^+}\le \al$, where $\al$ is given in Proposition \ref{main3}, then $F(t)\equiv 0$.
\end{cor}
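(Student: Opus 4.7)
The plan is to combine the modified scattering result Proposition~\ref{main3} with a rigidity argument for the resonant flow \eqref{fr0} and then close using $L^2$-mass conservation. Assume $F(t)\to F_\infty$ in $S^+$ (and hence in $S$, since $S^+\hookrightarrow S$) as $t\to\infty$. Proposition~\ref{main3} produces a solution $G\in C([0,\infty),S)$ of \eqref{fr0} with $\|F(t)-G(\pi\ln t)\|_S\to 0$. Composing limits, $G(s)\to F_\infty$ in $S$ as $s=\pi\ln t\to\infty$, so it suffices to show that $F_\infty=0$.

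Next, I would use that the nonlinear map $u\mapsto\cF_z^{-1}[\Fav(\hat u)]$ is continuous and cubic on $S$ (via the nonlinear estimates of Proposition~\ref{nlest2}) to transfer the convergence of $G(s)$ to its time derivative: $\partial_s G(s) = -i\cF_z^{-1}[\Fav(\hat G(s))]$ converges in $S$ to $L:=-i\cF_z^{-1}[\Fav(\hat F_\infty)]$. Integrating the identity $G(s+1)-G(s)=\int_s^{s+1}\partial_\tau G(\tau)\,d\tau$ and sending $s\to\infty$, the left-hand side tends to $0$ while the right-hand side tends to $L$, forcing $\Fav(\hat F_\infty)=0$. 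Pairing slicewise in the $z$-Fourier variable $\zeta$ against $\hat F_\infty$ in $L^2_x(\R^2)$ and using the positivity identity
\[
\langle \Fav(v),v\rangle_{L^2(\R^2)}=\f{1}{2\pi}\int_0^{2\pi}\|e^{-i\theta\cH}v\|_{L^4(\R^2)}^4\,d\theta\ge 0,
\]
which follows from \eqref{Fav} and the self-adjointness of $\cH$, we get $\hat F_\infty(\cdot,\zeta)=0$ for a.e.\ $\zeta$, and so $F_\infty=0$.

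Finally, $F(t)\to 0$ in $S\hookrightarrow L^2(\R^3)$. Because $e^{-it\partial_z^2/2}$ and $e^{it\cH/\ep^2}$ are unitary on $L^2$, and both \eqref{ep0} and \eqref{lmt0} conserve the $L^2$-mass, $\|F(t)\|_{L^2}$ is independent of $t$. Hence $\|F(0)\|_{L^2}=0$, so $F(0)=0$, and uniqueness for the Cauchy problems of \eqref{ep0} and \eqref{lmt0} yields $F\equiv 0$.

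The step I expect to require the most care is the rigidity at infinity: a priori the existence of $\lim_{s\to\infty}G(s)$ does not force $\partial_s G\to 0$. The resolution relies crucially on the continuity of $\Fav$ on $S$ to pass the limit inside the nonlinearity, and on positivity of $\langle\Fav(v),v\rangle$ to identify the only stationary point compatible with the small-data regime as zero.
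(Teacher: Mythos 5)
Your argument is correct and is essentially the rigidity proof the paper invokes by citing \cite{MS} (Corollary 1.4, Section 3.3): Proposition \ref{main3} transfers the convergence to the resonant flow \eqref{fr0}, the averaged-derivative argument forces the limit to satisfy $\cR[F_\infty,F_\infty,F_\infty]=0$, the positivity $\langle \Fav(v),v\rangle_{L^2(\R^2)}=\f{1}{2\pi}\int_0^{2\pi}\|e^{-i\theta\cH}v\|_{L^4}^4\,d\theta$ gives $F_\infty=0$, and $L^2$-mass conservation of \eqref{ep0}/\eqref{lmt0} concludes. One small correction: the trilinear continuity of $\cR$ on $S$ should be attributed to the resonant-system estimates of \cite{MS} (Section 3), as indicated in Section \ref{frs} of this paper, rather than to Proposition \ref{nlest2}, which only controls the error term $\cE^t$.
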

\begin{proof}
	See the proof of \cite[Corollary 1.4]{MS} in \cite[Section 3.3]{MS}.
\end{proof}

\subsection{Organization of the paper}
We introduce notations used in this paper in Section \ref{preli} and give the definition and some properties of two resonant systems in Section \ref{reson}. We prove the key estimates for the nonlinearity in Section \ref{struc} and the main results in Section \ref{proof}.

\section{Preliminaries}\label{preli}
\subsection{Notation}
In this section, we introduce some notations used in this paper. We denote functions on $\R^3_\bfx$  by capital letters and functions on $\R_x^2$ or $\R_z$  by lower case letters. We denote 
\[ \cH =\f{1}{2}(-i\nabla_x+\f{1}{2}x^\perp)^2=-\f{1}{2}\Delta_x+\f{1}{8}|x|^2-\f{i}{2}x^\perp\cdot\nabla_x  \]
and decompose into the Harmonic oscillator $\cH_0$  and the angular momentum operator $L$:
\[  \cH_0:=-\f{1}{2}\Delta_x+\f{1}{8}|x|^2, \qquad L:= -\f{i}{2}x^\perp\cdot\nabla_x  . \]
We use the notation
\begin{equation}
	\cD^\ep:=\f{1}{\ep^2}\cH-\h\partial_z^2, \qquad \cD_0^\ep:=\f{1}{\ep^2}\cH_0-\h\partial_z^2 .
\end{equation} 
Let us recall some properties of $\cH_0$ and $L$ (cf.\cite{CR}). These operators admit the same eigenfunctions $\{h_{n_1,n_2}\}_{n_1,n_2 \in \N_0}$, which satisfy
\begin{equation}\label{propH}
 \cH_0 h_{n_1,n_2}=\h(n_1+n_2+1) h_{n_1,n_2},  \qquad Lh_{n_1,n_2}=\h(n_1-n_2)h_{n_1,n_2},
\end{equation}
and form the Hilbertian basis of $L^2(\R^2)$. Furthermore,  we denote  $n$-th eigenspace of $\cH_0$ ($n=n_1+n_2$) by $E_n$ and the projection onto $E_n$ by $\Pi_n$. Then the Hermite expansion of $u\in L^2(\R^2)$ is denoted by
\[ u(x)=\sum_{n=0}^\infty u_n(x), \qquad u_n(x) := \Pi_nu(x)=\sum_{n_1,n_2\in \N_0, \text{ }   n_1+n_2=n} \langle u, h_{n_1,n_2} \rangle_{L^2} h_{n_1,n_2}. \] 

We define
\[ \|u\|_{\Sigma_x^s}^2:=\sum_{n\ge 0} \lmd_n^s \|u_n\|_{L_x^2}^2 , \quad s\ge 0,\]
where $\lmd_n=\h(n+1)$. For a bounded function $\varphi$, we define
\[ \varphi(H)u:=\sum_{n\ge0} \varphi(\lmd_n)u_n. \]
On the other hand we define the Fourier transform on $\R_z$ by
\[  \cF_z f(\z) = \hat{f}(\z) := \f{1}{2\pi} \int_\R e^{-iz\z} f(z) dz. \]  
 Similarly, for a function $F(x,z)$ on $\R_\bfx^3$, $\hat{F}(x,\z)$ denotes the partial Fourier transform in $z$. Then, the full frequency expansion of $F$ reads
 \[ F(x,z)=\sum_{n\in \N_0}\int_\R \hat{F}_n(x,\z)e^{i\z z} d\z  . \]
 We use the Littlewood-Paley decomposition. Let $\varphi \in C_0^\infty(\R)$, $\varphi(x)=1$ when $|x|\le1$ and  $\varphi(x)=0$ when $|x|>2$, we define the Littlewood-Paley projection $P_{\le N}$ ($N\in 2^{\N_0}$) as
\[ (\cF P_{\le N}F ) (x,\z) := \varphi\Big(\f{\cH_0}{N^2}\Big)\varphi\Big(\f{\z}{N}\Big)\hat{F}(x,\z), \]
and also define 
\[ P_N:= P_{\le N}-P_{\le N/2}, \qquad P_{\ge N}:= 1-P_{\le N/2}. \]
$\Sigma_0^s(\R^3)$ is the partial Hermite Sobolev space whose functions satisfy
\[ \|F\|_{\Sigma_0^s(\R^3)}:=\big( \sum_{N\in 2^{\N_0}} N^{2s}\|P_NF\|_{L^2(\R^3)}^2)^\h <\infty. \] 
From \cite{Yajima} we have the norm equivalence
\begin{equation} \label{normeq}
	\|F\|_{\Sigma_0^s(\R^3)}\simeq (\| F\|_{H^s(\R^3)}^2 + \||x|^sF\|_{L^2(\R^3)}^2)^\h. 
\end{equation}
We denote the Littlewood-Paley projection in $z$ only by $Q$, that is,
\[ (\cF Q_{\le N}F ) (x,\z) := \varphi\Big(\f{\z}{N}\Big)\hat{F}(x, \z), \]
\[ Q_N:= Q_{\le N}-Q_{\le N/2}, \qquad Q_{\ge N}:= 1-Q_{\le N/2}. \] 
By Parseval's identity, we have
\begin{equation}
\| [Q_N,z]\|_{L_z^2 \to L_z^2}\lesssim N^{-1}.
\end{equation}
 
For $\om\in\Z$, we define the following level sets:
\[\Gamma_\om:=\{ (p,q,r,s)\in \N_0^4 : \lmd_p-\lmd_q+\lmd_r-\lmd_s = \h(p-q+r-s)= \f{\om}{2} \}. \]
Especially, $\Gamma_0$ is the resonant level set.

\subsection{Other norms}\label{othernorms}
We will use three different norms introduced in \cite{MS}:

\begin{align*}
\|F\|_{Z}&:=\Big[\sup_{\z\in \R}(1+|\z|^2)^2\sum_{p}(1+p)\|\hat{F}_p(\cdot, \z)\|_{L_x^2}^2\Big]^{\f{1}{2}}  \simeq \Big[\sup_{\z\in \R}(1+|\z|^2)^2\|\hat{F}(\cdot, \z)\|_{\Sigma_x^1}^2\Big]^{\f{1}{2}}\\
\|F\|_{S}&:=\|F\|_{\Sigma_0^N}+\|zF\|_{L_\bfx^2} \\
\|F\|_{S^+}&:=\|F\|_{S}+\|(1-\partial_z^2)^4F\|_{S}+\|zF\|_{S}.
\end{align*}
The operator $P_{\le N}$, $Q_{\le N}$ and multiplication of $\varphi(\cdot /N)$ are bounded in $Z$, $S$, $S^+$ uniformly in $N$. From \cite[Lemma 2.1]{MS} it holds that
\begin{equation}\label{emb}
	 \|F\|_{\Sigma_0^1}\lesssim \|F\|_{Z}\lesssim \|F\|_{S} \lesssim \|F\|_{S^+}.
\end{equation}
We also use the following space-time norms: for fixed $0<\delta<10^{-4}$ and any $T\ge1$,  
\begin{align*}
\|F\|_{X_T}&:=\sup_{0\le t\le T}\{ \|F(t)\|_{Z}+(1+|t|)^{-\delta}\|F(t)\|_{S}+(1+|t|)^{1-3\delta}\|\partial_t F(t)\|_{S} \} \\
\|F\|_{X_T^+}&:=\|F\|_{X_T}+\sup_{0\le t\le T}\{ (1+|t|)^{-5\delta}\|F(t)\|_{S^+}+(1+|t|)^{1-7\delta}\|\partial_t F(t)\|_{S^+} \} 
\end{align*}
\begin{align*}
	\|F\|_{Y_T}&:=\sup_{T\le t}\{ \|F(t)\|_{Z}+(1+|t|)^{-\delta}\|F(t)\|_{S}+(1+|t|)^{1-3\delta}\|\partial_t F(t)\|_{S} \} \\
	\|F\|_{Y_T^+}&:=\|F\|_{Y_T}+\sup_{T\le t}\{ (1+|t|)^{-5\delta}\|F(t)\|_{S^+}+(1+|t|)^{1-7\delta}\|\partial_t F(t)\|_{S^+} \} .
\end{align*}

\subsection{Full nonlinearity}
Let us define the trilinear form $\cN^{t,\ep}$ by
\begin{equation}\label{nl}
\cN^{t,\ep}[F,G,H]:=e^{it\cD^\ep}(e^{-it\cD^\ep}F \cdot  \overline{e^{-it\cD^\ep}G} \cdot e^{-it\cD^\ep}H)
\end{equation}\label{n2}
where $ \ep\in (0,1)$. If there is no confusion, we omit the index $\ep$ : $\cN^t:=\cN^{t,\ep}$. 
We recall \eqref{propH} and the fact that the operator $e^{itL}$ is rotation of the $x$-coordinates of angle $t$, that is, for $x\in \R^2$ it holds
\[ e^{itL}f(x)=f(A_t x),\quad  A_t = \begin{pmatrix}
	\cos{t}  & -\sin{t} \\
	\sin{t} & \cos{t} \\
\end{pmatrix}
.\]
Then it holds
\begin{equation}\label{n3}
	\cN^{t}[F,G,H]=e^{it\cD_0^\ep}(e^{-it\cD_0^\ep}F \cdot e^{it\cD_0^\ep} \overline{G} \cdot e^{-it\cD_0^\ep}H).
\end{equation}\label{n4}
We denote $U^\ep$ by
\[ U^\ep(t,\bfx):=e^{it\cD^\ep}\psiep(t,\bfx). \]
We see that $\psiep$ solves \eqref{ep} if and  only if $U^\ep$ solves 
\begin{equation}\label{Uep}
i\partial_t U^\ep(t)=\cN^t[U^\ep(t), U^\ep(t), U^\ep(t)].
\end{equation}
In the Fourier side, the following formulation holds:
\[\cF_z\cN^t[F,G,H]=e^{it\f{\cH_0}{\ep^2}}\int_{\R^2}e^{it\e\kappa}e^{-it\f{\cH_0}{\ep^2}}\hat{F}(\z-\e)e^{it\f{\cH_0}{\ep^2}}\overline{\hat{G}(\z-\e-\kappa) } e^{-it\f{\cH_0}{\ep^2}}\hat{H}(\z-\kappa)d\e d\kappa.\]
Focusing on $z$-direction, we set
\begin{equation}\label{I}
\cI^t[f,g,h]:= e^{-it\f{\partial_z^2}{2}}(e^{it\f{\partial_z^2}{2}}f\cdot e^{-it\f{\partial_z^2}{2}} \overline{g} \cdot e^{it\partial_z^2}h).
\end{equation}
Then it holds
\[\cF_z\cI^t[f,g,h]=\int_{\R^2}e^{it\e\kappa}\hat{f}(\z-\e)\overline{\hat{g}(\z-\e-\kappa)} \hat{h}(\z-\kappa)d\e d\kappa,\]

\[\cF_z\cN^t[F,G,H]=e^{it\f{\cH_0}{\ep^2}} \cI^t [e^{-it\f{\cH_0}{\ep^2}}\hat{F}, e^{it\f{\cH_0}{\ep^2}}\overline{\hat{G}}, e^{-it\f{\cH_0}{\ep^2}}\hat{H}].\]
Using the Hermite expansion of $F$, $G$ and $H$, it also holds
\begin{equation}
\cF_z\cN^t[F,G,H]=\sum_{\om\in \Z} e^{it\f{\om}{2\ep^2}}\sum_{(p,q,r,s) \in \Gamma_\om} \Pi_p \int_{\R^2}e^{it\e\kappa} \hat{F_q}(\z-\e)\overline{\hat{G_r}(\z-\e-\kappa) } \hat{H_s}(\z-\kappa)d\e d\kappa
\end{equation}

\section{Resonant systems}\label{reson}
In this paper we consider two resonant systems,  ``partial'' resonant system and ``full'' resonant system. The partial resonant system extracts the resonance in the $x$-directions only and the $z$-direction remains as \eqref{nl}. The full resonant system extracts the resonance in both $x$ and $z$. So the error between the partial resonant system and \eqref{Uep} is smaller than between the full resonant system and \eqref{Uep}. On the other hand, the full resonant system matches $Z$, $S$ and $S^+$ norms better than the partial resonant system.
\subsection{Partial resonant system}\label{prs}
We define for $f,g,h \in L^2(\R_x^2)$ that
\[ \Fav(f,g,h):= \f{1}{2\pi}\int_{0}^{2\pi} e^{i\theta\cH}(e^{-i\theta\cH}f \cdot e^{i\theta\cH}\overline{g} \cdot e^{-i\theta\cH}h)d\theta. \]
By \eqref{Fav}, $\Fav(f)=\Fav(f,f,f)$. 
We introduce a trilinear form $\cN_0^t$ by
\begin{equation}\label{nlpr1}
\begin{split}
\cN_0^t[F,G,H]&:= e^{-it\f{\partial_z^2}{2}} \Fav(e^{it\f{\partial_z^2}{2}}F, e^{it\f{\partial_z^2}{2}}G, e^{it\f{\partial_z^2}{2}}H) \\
& =\f{1}{2\pi}\int_{0}^{2\pi} e^{i(\theta\cH-t\f{\partial_z^2}{2})}(| e^{-i(\theta\cH-t\f{\partial_z^2}{2})}\phi|^{2} e^{-i(\theta\cH-t\f{\partial_z^2}{2})}\phi)d\theta\\
& =\f{1}{2\pi}\int_{0}^{2\pi} e^{i(\theta\cH_0-t\f{\partial_z^2}{2})}(| e^{-i(\theta\cH_0-t\f{\partial_z^2}{2})}\phi|^{2} e^{-i(\theta\cH_0-t\f{\partial_z^2}{2})}\phi)d\theta.
\end{split}
\end{equation}
It also can be written as
\begin{equation}\label{nlpr2}
\cN_0^t[F,G,H]=\sum_{(p,q,r,s)\in \Gamma_0} \Pi_p \cN^t[F_q, G_r, H_s].
\end{equation}
We denote $W$ by
\[ W(t, \bfx):=e^{-it\f{\partial_z^2}{2}}\phi(t, \bfx) \]
where $\phi(t)$ is a solution to \eqref{lmt}.  Then $\phi$ solves \eqref{lmt} if and only if $W$ solves
\begin{equation}\label{pr}
 i\partial_t W(t) = \cN_0^t [W(t),W(t),W(t)]. \tag{PR}
\end{equation}
For the nonlinearity $\cN_0^t$, we have the following estimates.

\begin{lemma}[\cite{MS} Lemma 4.6]\label{N0}
Let $N\ge7$. Then we have

\begin{equation}\label{N01}
\|\cN_0^t[F^a, F^b,F^c]\|_{L_\bfx^2} \lesssim (1+|t|)^{-1} \min_{\sigma\in \fS^3}\|F^{\sigma(a)}\|_{L_\bfx^2}\|F^{\sigma(b)} \|_{Z_t^\delta} \|F^{\sigma(c)}\|_{Z_t^\delta} 
\end{equation}

\begin{equation}\label{N03}
\begin{split}
\|\cN_0^t[F^a, F^b,F^c]\|_{S} &\lesssim (1+|t|)^{-1} \max_{\sigma\in \fS^3}\|F^{\sigma(a)} \|_{S} \|F^{\sigma(b)} \|_{Z_t^\delta} \|F^{\sigma(c)}\|_{Z_t^\delta}	
\end{split}
\end{equation}

\begin{equation}\label{N04}
\begin{split}
\|\cN_0^t[F^a, F^b,F^c]\|_{S^+} \lesssim &(1+|t|)^{-1} \max_{\sigma\in \fS^3} \|F^{\sigma(a)}\|_{S^+} \|F^{\sigma(b)} \|_{Z_t^\delta} \|F^{\sigma(c)}\|_{Z_t^\delta}  \\
&+(1+|t|)^{-1+2\delta} \max_{\sigma\in \fS^3} \|F^{\sigma(a)} \|_{S} \|F^{\sigma(b)} \|_{S} \|F^{\sigma(c)}\|_{Z_t^\delta},
\end{split}
\end{equation}
where $\fS^3$ is the symmetric group of degree 3 and 
\[ \|F\|_{Z_t^\delta}:=\|F \|_{Z}+\langle t\rangle^{-\delta} \|F\|_{S}. \]

\end{lemma}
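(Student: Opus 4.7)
The estimates in this lemma are quoted directly from \cite[Lemma 4.6]{MS}, and my plan is to follow that strategy. The starting point is the structural identity
\[
\cN_0^t[F,G,H]=e^{-it\partial_z^2/2}\Fav\!\left(e^{it\partial_z^2/2}F,\,e^{it\partial_z^2/2}G,\,e^{it\partial_z^2/2}H\right),
\]
which separates the problem into two orthogonal pieces: (a) the $x$-trilinear form $\Fav$, which acts pointwise in $z$ and is bounded on $L^2_x$ by unitarity of $e^{i\theta\cH}$ together with a Hermite-Sobolev algebra estimate of the form $\|\Fav(f,g,h)\|_{L^2_x}\lesssim \|f\|_{L^2_x}\|g\|_{\Sigma_x^1}\|h\|_{\Sigma_x^1}$; and (b) the free 1D Schr\"odinger evolution in $z$, which supplies all of the temporal decay. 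Since $e^{-it\partial_z^2/2}$ is unitary, it disappears from the $L^2_\bfx$ estimate and commutes with $\cH_0$-based weights, so the task is to control $\Fav$ composed with the 1D free flow.

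For \eqref{N01}, I would pass to the $z$-Fourier side, where the trilinear action becomes an oscillatory integral with phase $e^{it\eta\kappa}$ acting on three factors $\hat F^a(\zeta-\eta)$, $\overline{\hat F^b(\zeta-\eta-\kappa)}$, $\hat F^c(\zeta-\kappa)$, as in the identity displayed just before Section~\ref{reson}. Using $\Fav$ in $L^2_x$ pointwise in $\zeta$ and Plancherel in $z$, the $(1+|t|)^{-1}$ decay is obtained from a 1D trilinear dispersive estimate: two of the three factors are placed in $L^\infty_z$ via $\|e^{it\partial_z^2/2}f\|_{L^\infty_z}\lesssim |t|^{-1/2}\|f\|_{L^1_z}$, with the weighted supremum $\sup_\zeta (1+|\zeta|^2)\|\hat F(\cdot,\zeta)\|_{\Sigma_x^1}^{1/2}$ built into $Z$ controlling simultaneously the $\Sigma_x^1$ content needed to apply $\Fav$ and the $\zeta$-decay that converts the $Z$ bound into the $L^1_z$-type bound required by the dispersive estimate. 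The remaining factor is absorbed in $L^2_\bfx$.

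For \eqref{N03} and \eqref{N04}, I would distribute the $S$- and $S^+$-weights via Leibniz. The 3D Hermite-Sobolev weight $\Sigma_0^N$ passes through $\Fav$ essentially freely because $\cH_0$ commutes with $e^{i\theta\cH_0}$ and with $e^{it\partial_z^2/2}$, and the resonance constraint $(p,q,r,s)\in\Gamma_0$ together with the representation \eqref{nlpr2} keeps the projection $\Pi_p$ well-behaved on each Hermite level. The weight $z$ commutes with $\Fav$ but not with $e^{it\partial_z^2/2}$: the identity $[z,e^{it\partial_z^2/2}]=it\,\partial_z e^{it\partial_z^2/2}$ is the mechanism that produces the extra factor of $t$ yielding the second, slower term in \eqref{N04}, in which two $S$-factors absorb the commutator cost while the third still supplies $Z_t^\delta$ control. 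The first term of \eqref{N04} corresponds instead to $z$ falling directly on the profile, for which the rate $(1+|t|)^{-1}$ works exactly as in \eqref{N03}.

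The main obstacle I anticipate is the bookkeeping in the $S^+$ step: one must verify that the extra factor of $t$ from $[z,e^{it\partial_z^2/2}]$ lands only on combinations that already contain two $S$-norms, otherwise the exponent $(1+|t|)^{-1+2\delta}$ degrades and the later bootstrap estimates fail to close. A secondary technical point is matching the $(1+|\zeta|^2)^2$ weight built into $Z$ against the Hermite weights in $\Sigma_x^s$ during the dispersive interpolation, so that a single $Z_t^\delta$ factor simultaneously powers both the $L^\infty_z$ decay and the $\Sigma_x^1$ control needed to apply $\Fav$ pointwise in $\zeta$.
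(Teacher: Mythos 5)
First, note that the paper itself does not prove this lemma: it is imported verbatim from \cite[Lemma 4.6]{MS} (the remark after it only points out that \eqref{N01} is extracted from the proof there), so your sketch has to be measured against the argument in \cite{MS}. At the level of architecture you do reproduce it: a trilinear resonant estimate for $\Fav$ in $x$ of the form $\|\Fav(f,g,h)\|_{L^2_x}\lesssim\|f\|_{L^2_x}\|g\|_{\Sigma_x^1}\|h\|_{\Sigma_x^1}$, combined with $1$D dispersion in $z$ (one factor in $L^2_z$, two in $L^\infty_z$), and weights distributed over the factors for the $S$ and $S^+$ bounds. (A minor inaccuracy: that $\Fav$ bound is not ``unitarity plus an algebra property'' --- $\Sigma_x^1(\R^2)$ does not embed in $L^\infty_x$ --- it uses the $\theta$-average, via the lens transform/Strichartz for the Landau flow; but the estimate you state is the right one.)

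The genuine gaps are in the two mechanisms that produce the precise right-hand sides. (1) You propose to get the decay from $\|e^{it\partial_z^2/2}f\|_{L^\infty_z}\lesssim|t|^{-1/2}\|f\|_{L^1_z}$ and claim the $Z$-norm converts into that $L^1_z$ bound. It does not: $Z$ is a weighted $L^\infty_\zeta\Sigma_x^1$ bound on the partial Fourier transform and controls neither $\|f\|_{L^1_z}$ nor $\|\langle z\rangle f\|_{L^2_z}$; those are $S$-quantities, and since $\|F\|_{S}\le\langle t\rangle^{\delta}\|F\|_{Z_t^\delta}$, running your argument gives at best $(1+|t|)^{-1+2\delta}$ in \eqref{N01} and \eqref{N03}, i.e.\ exactly the refinement that makes this lemma useful is lost. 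The correct tool is the refined linear estimate $\|e^{it\partial_z^2/2}F\|_{L^\infty_z\Sigma_x^1}\lesssim|t|^{-1/2}\|F\|_{Z}+|t|^{-1/2-\sigma}\|F\|_{S}$ with $\sigma\ge\delta$ (the content of \cite[Lemma 2.2]{MS}, via the factorization of the free propagator/stationary phase), which is what lets each $L^\infty_z$ placement cost only one $Z_t^\delta$-norm. (2) Your explanation of the second term in \eqref{N04} through $[z,e^{it\partial_z^2/2}]$ ``producing the extra factor of $t$'' cannot be right: an uncompensated factor of $t$ would wipe out the decay (and would already contaminate \eqref{N03}, which has no loss). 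In fact there is no such loss: the vector field $z\pm it\partial_z$ acts as a signed derivation on the gauge-invariant combination $u\bar v w$ (equivalently, on the Fourier side the phase $e^{it\eta\kappa}$ is independent of $\zeta$, so $\partial_\zeta$ and $z$-weights simply distribute over the three profiles), so the $S^+$ weights land on the factors with no power of $t$; the $t^{2\delta}$ loss has a different origin, namely the case where the heavy weight falls so that two of the remaining factors must be measured in $S$ rather than $Z_t^\delta$, each conversion costing $\langle t\rangle^{\delta}$. As written, then, your sketch would not deliver the stated exponents.
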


\begin{remark}
 \eqref{N01}  appears in the proof of {\rm\cite[Lemma 4.6]{MS}} but is not used.  In our case this is one of the key estimate.
\end{remark}

\subsection{Full resonant system}\label{frs}
We define the full resonant system
\begin{equation}\label{fr}
i\partial_t G(t) = \cR[G(t),G(t),G(t)] \tag{FR}
\end{equation}
where 
\begin{equation}\label{nlfr}
	\cR[F,G,H](\bfx):= \cF_z^{-1}[\Fav(\hat{F}, \hat{G}, \hat{H})](\bfx). 
\end{equation}
That is,
\begin{equation}
\begin{split}
i\partial_t \hat{G}(t,x,\z) &= \Fav(\hat{G}(t,x,\z)) \\
&=  \f{1}{2\pi}\int_{0}^{2\pi} e^{i\theta\cH}(|e^{-i\theta\cH}\hat{G}(t,x,\z)|^{2}e^{-i\theta\cH}\hat{G}(t,x,\z))d\theta .
\end{split}
\end{equation}
Referring to \cite[Section 3]{MS} (and \cite[Section 4]{MST}), we observe  the nonlinearity \eqref{nlfr} has the better estimates for $Z$, $S$, $S^+$ norms than \eqref{nlpr1}. This is the reason we need \eqref{pr} to obtain the a priori estimate, Proposition \ref{apriori}.

\section{Structure of the nonlinearity}\label{struc}
In this section, we extract the key effective interactions from the nonlinearity \eqref{nl}. We basically follow the strategy in \cite{MS}, but to fit our case, we need some modifications and to consider two kinds of decomposition.

\begin{prop}\label{nlest1}
	Fix $\ep \in(0,1). $We decompose \eqref{nl} as
	\begin{equation}
		\cN^{t,\ep}[F,G,H]=\cN_0^t[F,G,H]+\tilde{\cN}^{t,\ep}[F,G,H]
	\end{equation}
	where $\cN_0^t$ is given in \eqref{nlpr1}. Assume that, for $T^*\ge1$, $F$, $G$, $H: \R\to S$ satisfy
		\begin{equation}\label{asY}
		\|F\|_{Y_{T^*}} +\|G\|_{Y_{T^*}}+\|H\|_{Y_{T^*}} < \infty.
	\end{equation}
	Then the following estimate holds uniformly in $T^*\ge1$ and $\ep\in(0,1)${\rm :}
	\[ \sup_{T*\le T}T^{\f{1}{16}} \Big\| \int_{T/2}^T  \tilde{\cN}^{t,\ep}[F(t),G(t),H(t)]dt\Big\|_{S} \lesssim \ep^{\f{1}{13}} \|F\|_{Y_{T^*}} \|G\|_{Y_{T^*}} \|H\|_{Y_{T^*}} .\]
	Assuming in addition 
	\begin{equation}\label{asY+}
	\|F\|_{Y^+_{T^*}} +\|G\|_{Y^+_{T^*}}+\|H\|_{Y^+_{T^*}} < \infty,
	\end{equation}
	we also have
	\[ \sup_{T*\le T}T^{\f{1}{16}} \Big\| \int_{T/2}^T \tilde{\cN}^{t,\ep}[F(t),G(t),H(t)] dt\Big\|_{S^+} \lesssim \ep^{\f{1}{13}} \|F\|_{Y^+_{T^*}} \|G\|_{Y^+_{T^*}} \|H\|_{Y^+_{T^*}} .\]
	
\end{prop}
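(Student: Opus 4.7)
The starting point is the Fourier--Hermite representation of $\cN^{t,\ep}$ from Section \ref{preli}:
$$\cF_z\cN^{t,\ep}[F,G,H](x,\z) = \sum_{\om\in\Z} e^{it\om/(2\ep^2)} \mathcal{M}_\om^t[F,G,H](x,\z),$$
where $\mathcal{M}_\om^t$ is the $(\e,\kappa)$-integral restricted to the level set $\Gamma_\om$. By \eqref{nlpr2} the $\om=0$ contribution is exactly $\cN_0^t$, so the remainder
$$\tilde{\cN}^{t,\ep}[F,G,H] = \cF_z^{-1}\sum_{\om\neq 0} e^{it\om/(2\ep^2)} \mathcal{M}_\om^t[F,G,H]$$
has as its only source of smallness the rapid oscillation $e^{it\om/(2\ep^2)}$ when $\om\neq 0$ and $\ep$ is small.

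The core step is to integrate by parts in $t$ against $e^{it\om/(2\ep^2)}$ for every $\om\neq 0$, producing a factor $2\ep^2/(i\om)$, boundary contributions at $t=T/2,T$, and an interior integral where $\partial_t$ falls on $\mathcal{M}_\om^t[F,G,H]$. Each boundary term is controlled by the pointwise trilinear estimate obtained by repeating the proof of Lemma \ref{N0} with $\Gamma_0$ replaced by $\Gamma_\om$: the $(1+t)^{-1}$ dispersive decay intrinsic to $\cI^t$ is preserved, and the remaining factors $\|\cdot\|_{Z_t^\delta}$ and $\|\cdot\|_S$ are controlled by $\|\cdot\|_{Y_{T^*}}$ (up to $(1+T)^\delta$). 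In the interior, $\partial_t$ falls on one of $F,G,H$ and by definition of $\|\cdot\|_{Y_{T^*}}$ gains $(1+t)^{-1+3\delta}$, so the $t$-integral over $[T/2,T]$ of the triple becomes $O(T^{-1+3\delta})$. The main obstacle is the divergence of $\sum_{\om\neq0}|\om|^{-1}$, which I would tame by dyadically decomposing $|\om|\sim\Omega\in 2^{\N_0}$ and exploiting that $(p,q,r,s)\in\Gamma_\om$ forces $\max(p,q,r,s)\gtrsim|\om|$; combined with the $\Sigma_0^N$ regularity contained in the $S$ norm through \eqref{normeq} and $N\geq 7$, this delivers a polynomial gain $(1+|\om|)^{-M}$ at the cost of a single $S$ factor. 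Combining integration by parts on low $|\om|$ with a direct estimate using this Hermite gain on high $|\om|$, and choosing the dyadic cutoff $\Omega$ as a suitable power of $\ep$, one balances the two bounds; the specific exponents $1/13$ and $1/16$ arise from this interpolation, once the small parameter $\delta$ is accounted for.

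For the $S^+$ version the plan is identical, but the weights $(1-\partial_z^2)^4$ and $z$ appearing in $\|\cdot\|_{S^+}$ must be commuted through $\mathcal{M}_\om^t$. Since $\partial_z$ commutes with $\cH_0$ and with the projections $\Pi_p$, and $\|[z,Q_{\le N}]\|_{L^2_z\to L^2_z}\lesssim N^{-1}$, the extra commutator terms are absorbed by the stronger $\|\cdot\|_{S^+}$ factors appearing in $\|\cdot\|_{Y_{T^*}^+}$, at the price of an additional $(1+t)^{2\delta}$ loss that is already built into $Y_{T^*}^+$. The hardest technical point throughout will be to keep the dispersive $(1+t)^{-1}$ decay and the $\Sigma_0^N$ Hermite decay simultaneously effective after the restriction to $\Gamma_\om$---in effect, redoing the proof of Lemma \ref{N0} $\om$ by $\om$ while explicitly tracking how the constants depend on $\om$.
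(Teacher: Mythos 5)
Your plan has a genuine gap at its central step. After writing $\tilde{\cN}^{t,\ep}=\sum_{\om\neq0}e^{it\om/(2\ep^2)}\mathcal{M}_\om^t$ and integrating by parts in $t$, the interior term is not only ``$\partial_t$ falls on one of $F,G,H$'': the amplitude $\mathcal{M}_\om^t$ carries the explicit oscillation $e^{it\e\ka}$ (equivalently, the conjugation by $e^{\pm it\partial_z^2/2}$), and when $\partial_t$ hits it you pick up the unbounded factor $\e\ka$, i.e.\ a loss of two derivatives in $z$. Your dyadic decomposition in $\om$ and the Hermite gain from $\Gamma_\om$ are irrelevant here, since $\e,\ka$ are $z$-frequencies and have nothing to do with the Landau levels; and without any truncation in $z$ the factor $\e\ka$ cannot be absorbed by the finite regularity in $S$ uniformly in $T$. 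Even if you first cut off $z$-frequencies at some threshold, on the remaining region $|\e\ka|$ can still be large compared with $1$ (and, for $T$ large relative to powers of $\ep^{-1}$, comparable to $\om/\ep^2$, where the combined phase can even resonate), so a bare time integration by parts does not close uniformly in $T^*\ge1$ and $\ep\in(0,1)$ as the statement requires. This is exactly why the paper's proof is not a single time integration by parts: it first removes $z$-frequencies above $(T/\ep)^{1/6}$ by the bilinear-Strichartz-type Lemma \ref{high} with $\al=\ep$ (built on Lemma \ref{bi2}), and on the low-frequency part (see \eqref{suppl} and \eqref{decNf}) it inserts the cutoff $\varphi(\ep t^{1/6}\e\ka)$, performing the time integration by parts only on the near-stationary piece $\cO_2^t$ (where $\partial_t$ of the phase-times-cutoff is controlled by $\al^{-1}t^{-1/6}$, see \eqref{eqo2}), while the complementary piece $\cO_1^t$ is handled by integration by parts in $\ka$ (stationary phase in the $z$-frequencies), which gains $\ep^{1/13}$ purely from the choice of cutoff, not from the $\ep^{-2}$-oscillation.

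A secondary point: your treatment of the divergent sum $\sum_{\om\neq0}|\om|^{-1}$ via Hermite regularity is plausible but delicate (when the output index $p$ is the largest element of $\Gamma_\om$ the claimed gain ``at the cost of a single $S$ factor'' is not automatic, and each power of $|\om|^{-1}$ costs regularity that must stay within the fixed $\Sigma_0^N$ budget). The paper sidesteps the issue entirely: instead of dividing by $i\om/(2\ep^2)$ term by term, it writes the antiderivative as $\int_{2\pi\ep^2\lfloor t/(2\pi\ep^2)\rfloor}^{t}e^{i\om\tau/(2\ep^2)}\,d\tau$, so that after resumming over $\om\neq0$ the error is bounded by the full nonlinearity plus its resonant part on an interval of length $O(\ep^2)$, with no $\om$-summation loss. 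Finally, the exponents $1/13$ and $1/16$ in the statement come from the choice $\al=\ep$ in the two cutoffs above, not from an interpolation between a time-IBP bound and a Hermite-decay bound, so the exponent bookkeeping in your plan would also have to be redone once the missing decomposition is added.
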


\quad\\
\begin{prop}[\cite{MS} Proposition 4.1]\label{nlest2}
Fix $\ep\in (0,1)$ and let $\cN_*^t$ denote $\cN^t$ or $\cN_0^t$. We decompose \eqref{nl} as
\begin{equation}
\cN_*^t[F,G,H]=\f{\pi}{t}\cR[F,G,H]+\cE^t[F,G,H]
\end{equation}
where $\cR$ is given in \eqref{nlfr}. Assume that, for $T^*\ge1$, $F$, $G$, $H: \R\to S$ satisfy 
	\begin{equation}\label{asX}
	\|F\|_{X_T^*} +\|G\|_{X_T^*}+\|H\|_{X_T^*}\le1.
\end{equation}
 Then we can write 
\[ \cE^t[F(t),G(t),H(t)]=\cE_1^t[F(t),G(t),H(t)]+ \cE_2^t[F(t),G(t),H(t)]=:\cE_1(t)+\cE_2(t)\]
and the following estimates hold uniformly in $T^*\ge1$ and $\ep\in (0,1)$ {\rm:}
\[\sup_{1\le T\le T^*}T^{-\delta} \Big\| \int_{T/2}^T \cE_i(t)dt\Big\|_{S} \lesssim 1, \qquad i=1,2,\]
\[ \sup_{1\le T\le T^*}(1+|t|)^{\f{34}{33}}\|\cE_1(t)\|_{Z} \lesssim 1 , \qquad \sup_{1\le T\le T^*}(1+|t|)^{\f{1}{10}}\|\cE_3(t)\|_{S}\lesssim 1,   \]
where $\cE_2(t)=\partial_t\cE_3(t)$. Assuming in addition
	\begin{equation}\label{asX+}
	\|F\|_{X^+_{T^*}} +\|G\|_{X^+_{T^*}}+\|H\|_{X^+_{T^*}}\le1,
\end{equation} 
we also have
\[ \sup_{1\le T\le T^*}T^{-5\delta} \Big\| \int_{T/2}^T \cE_i(t)dt \Big\|_{S^+} \lesssim 1, \quad  \sup_{1\le T\le T^*}T^{\f{34}{33}} \Big\| \int_{T/2}^T \cE_i(t)dt \Big\|_{S} \lesssim 1, \quad i=1,2.\]
\end{prop}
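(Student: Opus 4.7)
The plan is to follow the stationary-phase analysis of \cite[Proposition 4.1]{MS}, with the only adaptation needed being to handle the $\ep$-dependent nonlinearity $\cN^t$. For that case it suffices to combine Proposition \ref{nlest1} with the $\cN_0^t$ case of the present proposition, since Proposition \ref{nlest1} bounds the difference $\cN^t-\cN_0^t$ by $O(\ep^{1/13})$ on each dyadic time slab and this error can be absorbed into $\cE_1$ (say). The core task is therefore the stationary-phase analysis of $\cN_0^t$, which by \eqref{nlpr2} reduces to controlling, for each $(p,q,r,s)\in\Gamma_0$, the oscillatory integral
\[ \int_{\R^2} e^{it\e\ka}\,\hat{F}_q(\z-\e)\overline{\hat{G}_r(\z-\e-\ka)}\hat{H}_s(\z-\ka)\,d\e\,d\ka. \]

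The phase $\Phi(\e,\ka)=\e\ka$ has a non-degenerate saddle at the origin. I would Littlewood--Paley localize the amplitude in $(\e,\ka)$ on dyadic scales relative to $|t|^{-1/2}$, splitting into a near-saddle piece and a far piece. For the near-saddle piece, a Taylor expansion of the amplitude at $(0,0)$ combined with the Fresnel identity $\int_{\R^2} e^{it\e\ka}\chi(\e,\ka)\,d\e\,d\ka=\f{2\pi}{t}\chi(0,0)+O(t^{-1-\al})$ extracts exactly the leading term $\f{\pi}{t}\hat{F}_q(\z)\overline{\hat{G}_r(\z)}\hat{H}_s(\z)$, which upon summation over $\Gamma_0$ produces $\f{\pi}{t}\cR[F,G,H]$. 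The Taylor remainder is placed in $\cE_1$, where the pointwise-in-$\z$ bound of strength $\langle t\rangle^{-34/33}$ in the $Z$-norm comes from the $\z$-weights $(1+|\z|^2)^2$ of $Z$ absorbing one $\z$-derivative of each input on a region of area $\sim |t|^{-1}$.

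For the far piece I would integrate by parts using $e^{it\e\ka}=(it\ka)^{-1}\partial_\e e^{it\e\ka}$, gaining a factor $|t\ka|^{-1}$ per step; repeating symmetrically in $\ka$ also yields the $Z$-estimate of $\cE_1$. To generate the $\cE_2=\partial_t\cE_3$ structure, on the region where $|\e\ka|\gtrsim |t|^{-1+\beta}$ I further write $e^{it\e\ka}=(i\e\ka)^{-1}\partial_t(e^{it\e\ka})$; the time derivative that then falls on the amplitudes is absorbed by the bound $\|\partial_t F\|_S\lesssim \langle t\rangle^{-1+3\de}$ built into the $X_T$ norm, delivering $\cE_3$ with $\|\cE_3\|_S\lesssim \langle t\rangle^{-1/10}$. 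The $S$-estimates for $\int_{T/2}^T \cE_i\,dt$ then follow by summing the dyadic $|t|^{-1}$ decay against the $\langle t\rangle^\de$-loss allowed in $X_T$, and the $S^+$-estimates propagate through the extra weights $(1-\partial_z^2)^4$ and $z$ with a $\langle t\rangle^{5\de}$-loss allowed by $X_T^+$.

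The main obstacle is bookkeeping: one must always arrange that at least two of the three inputs sit in $Z_t^\de$, so as to produce the dispersive factor $\langle t\rangle^{-1}$ via the same mechanism as \eqref{N01}, while the remaining input carries the $S$ or $S^+$ norm; simultaneously the $\z$-weights of $Z$ and the Hermite-frequency cut-offs $\varphi(\cH_0/N^2)$ must be propagated through the stationary-phase manipulation without double-counting against the $z$-frequency cut-offs $\varphi(\z/N)$. Matching these weights cleanly, together with the sharp exponents $\f{34}{33}$ and $\f{1}{10}$ which reflect the precise trade-off in the near/far saddle split and in the $\partial_t$-integration by parts, is the delicate technical core of the argument; everything else is a routine use of the Yajima equivalence \eqref{normeq} and Sobolev multiplication in $\Sigma_0^N$.
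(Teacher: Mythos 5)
Your outline of the stationary--phase analysis for the $\cN_0^t$ case (extraction of $\f{\pi}{t}\cR$ at the saddle of $\e\ka$, integration by parts off the saddle, and generating $\cE_2=\partial_t\cE_3$ by anti-differentiating $e^{it\e\ka}$ in $t$) is in the spirit of what the paper does, which is simply to invoke \cite[Sections 4.3--4.4]{MS} and \cite[Section 3.3]{MST} (cf.\ the paper's \eqref{NR}--\eqref{NR2}), together with the high $z$-frequency truncation of Lemma \ref{high}. The genuine gap is in your treatment of the $\ep$-dependent case $\cN_*^t=\cN^t$. You propose to reduce it to the $\cN_0^t$ case by citing Proposition \ref{nlest1} for the difference $\tilde{\cN}^t=\cN^t-\cN_0^t$ and "absorbing this error into $\cE_1$". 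This does not deliver the conclusion: Proposition \ref{nlest1} only provides \emph{time-integrated} bounds on $\int_{T/2}^T\tilde{\cN}^t\,dt$ in $S$ and $S^+$, whereas $\cE_1$ must satisfy the \emph{pointwise} bound $\sup_t(1+|t|)^{\f{34}{33}}\|\cE_1(t)\|_Z\lesssim1$, and pointwise in time $\tilde{\cN}^t$ decays no better than $t^{-1}$ (the extra gain in Proposition \ref{nlest1} comes only after integrating the oscillation $e^{i\om t/2\ep^2}$ over a dyadic block). Nor can you dump $\tilde{\cN}^t$ into $\cE_2$, since $\cE_2$ must be an exact time derivative $\partial_t\cE_3$ with $\|\cE_3(t)\|_S\lesssim t^{-1/10}$, a structure Proposition \ref{nlest1} does not provide as a black box. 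There is also a hypothesis mismatch: Proposition \ref{nlest1} is formulated with $Y_{T^*}$ norms and times $T\ge T^*$, while here only \eqref{asX} ($X_{T^*}$ control, $1\le T\le T^*$) is assumed.

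What is actually needed for $\cN^t$ is precisely the content of Lemmas \ref{high} and \ref{fast} applied with $\al=1$ (this is the paper's proof): the high $z$-frequencies are removed using the $\ep$-uniform bilinear estimate of Lemma \ref{bi2}, proved via the lens transform on time windows of length $\sim\ep^2$ (an ingredient absent from your plan), and the non-resonant Hermite interactions $\tilde{\cN}^t$ are split, after integration by parts in $t$ against $e^{i\om t/2\ep^2}$ over periods $2\pi\ep^2$, into a piece $\tilde{\cE_1}$ decaying like $t^{-17/16}$ in $S$ (hence in $Z$, which beats $t^{-34/33}$) and a genuine $\partial_t\cE_3$ piece, with all constants uniform in $\ep$ because the gain from the oscillation is $\ep^2\le1$. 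Once you replace your reduction by this decomposition, the rest of your argument (the $\cN_0^t$ stationary-phase part, following \cite{MS}) goes through.
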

\quad \\

\begin{remark}
Comparing Proposition \ref{nlest2} with \cite[Proposition 4.1]{MS}, some estimates are improved. This is because  \cite[Proposition 4.1]{MS} claims  the minimum required rough estimates, and we can improve these estimates by following the proof of \cite[Proposition 4.1]{MS}. We need somewhat precise estimates to give $\al$ in Proposition \ref{apriori} without depending on $\delta$.

\end{remark}

To prove these propositions we have to get some preliminary estimates. For simplicity, in the following subsection we only mention the case with the assumptions \eqref{asX} and \eqref{asX+}.  But we also have the similar estimates corresponding to the case with the assumptions \eqref{asY} and \eqref{asY+}.

\subsection{The high-frequency parts}
We first consider estimates for high frequency in $z$.

\begin{lemma}\label{high}
Assume $N\ge 8$, fix $\ep\in(0,1)$ and let $\al\in (0,1]$ and $\cN_*^t$ denote $\cN^t$ or $\cN_0^t$. Then the following estimates hold uniformly in $T\ge1$, $\alpha$, and $\ep${\rm:}
\begin{equation}\label{h1}
\Big\| \sum_{\substack{A,B,C \in 2^{\N_0} \\ \max(A,B,C)\ge(T/\al)^{1/6}}} \cN_*^t[Q_AF,Q_BG,Q_CH] \Big\|_Z \lesssim \al^{-\f{7}{6}}T^{-\f{7}{6}}\|F\|_{S} \|G\|_S \|H\|_S,
\end{equation}
\begin{equation}\label{h2}
\begin{split}
\Big\| \sum_{\substack{A,B,C \in 2^{\N_0} \\ \max(A,B,C)\ge(T/\al)^{1/6}}} \int_{T/2}^T \cN_*^t[Q_AF(t),&Q_BG(t),Q_CH(t)]dt  \Big\|_{S} \\
& \lesssim \al^{\f{1}{13}}{T}^{-\f{1}{13}}\|F\|_{X_T} \|G\|_{X_T} \|H\|_{X_T},
\end{split}
\end{equation}
\begin{equation}\label{h3}
\begin{split}
\Big\| \sum_{\substack{A,B,C \in 2^{\N_0} \\ \max(A,B,C)\ge(T/\al)^{1/6}}} \int_{T/2}^T \cN_*^t[Q_AF(t),&Q_BG(t),Q_CH(t)]dt \Big\|_{S^+}  \\
&\lesssim \al^{\f{1}{13}}T^{-\f{1}{13}}\|F\|_{X^+_T} \|G\|_{X^+_T} \|H\|_{X^+_T}.
\end{split}
\end{equation}

\end{lemma}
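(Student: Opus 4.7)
The plan is to exploit the high Hermite–Sobolev regularity built into the $S$-norm (which controls $\|F\|_{\Sigma_0^N}$ with $N\ge 8$) to extract a smoothing factor $A^{-M}$ from each dyadic piece $Q_AF$, then sum the trilinear expression under the constraint $\max(A,B,C)\ge(T/\al)^{1/6}$. By the symmetry of the trilinear form we assume throughout that $A$ is the largest. Since $Q_A$ localizes to $|\z|\gtrsim A/2$ while $P_{\le M}$ restricts to $|\z|\lesssim 2M$, we have $P_{\le M}Q_A\equiv 0$ whenever $M\ll A$; combined with $\|P_MF\|_{L^2}\lesssim M^{-N}\|F\|_{\Sigma_0^N}$ this gives $\|Q_AF\|_{L^2_{\bfx}}\lesssim A^{-N}\|F\|_S$, together with analogous weighted versions in $Z$, $S$, and $S^+$, obtained by paying at most a bounded polynomial cost in $A$ for each $z$-derivative, $x$-weight, or $z$-weight appearing in the target norm.

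I would first establish pointwise base trilinear estimates of the form $\|\cN_*^t[F,G,H]\|_{Z}\lesssim\|F\|_S\|G\|_S\|H\|_S$ and $\|\cN_*^t[F,G,H]\|_{S}\lesssim(1+|t|)^{-1}\|F\|_S\|G\|_S\|H\|_S$, and analogously in $S^+$. For $\cN_0^t$ these are essentially Lemma~\ref{N0}. For $\cN^t$, identity \eqref{n3} rewrites it as pointwise $\bfx$-multiplication conjugated by $e^{\pm it\cD_0^\ep}=e^{\pm it\cH_0/\ep^2}e^{\mp it\partial_z^2/2}$; since $e^{it\cH_0/\ep^2}$ is an $L^2_x$-isometry at each $t$, applying the one-dimensional dispersive estimate $\|e^{it\partial_z^2/2}f\|_{L^\infty_z}\lesssim|t|^{-1/2}\|f\|_{L^1_z}$ to two of the three factors produces the $(1+|t|)^{-1}$ decay uniformly in $\ep$. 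Combining with the frequency smoothing above yields, for $A=\max(A,B,C)$,
\[
\|\cN_*^t[Q_AF,Q_BG,Q_CH]\|_{\Xi}\lesssim (1+|t|)^{-\beta_\Xi}A^{-M}\|F\|_S\|G\|_S\|H\|_S,
\]
for $\Xi\in\{Z,S,S^+\}$, with $\beta_Z=0$, $\beta_S=\beta_{S^+}=1$, and $M$ as large as one wishes provided $N\ge M+c$ for an absolute $c$.

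The dyadic summation is then routine: summing $B,C\le A$ costs only a logarithm, while $\sum_{A\ge(T/\al)^{1/6}}A^{-M}\lesssim(T/\al)^{-M/6}$. For \eqref{h1}, taking $M=7$ yields $(T/\al)^{-7/6}=T^{-7/6}\al^{7/6}\le\al^{-7/6}T^{-7/6}$ (since $\al\le 1$). For \eqref{h2}–\eqref{h3}, integrating over $t\in[T/2,T]$ against the $(1+|t|)^{-1}$ factor contributes an extra $\log T$, and after absorbing the $t^\delta$ (resp.\ $t^{5\delta}$) growth of $\|F(t)\|_S$ (resp.\ $\|F(t)\|_{S^+}$) permitted by $X_T$ (resp.\ $X_T^+$) by choosing $M$ slightly larger, one lands on the $\al^{1/13}T^{-1/13}$ decay. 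The main technical obstacle is the $S^+$ estimate \eqref{h3}: the multiplication-by-$z$ weight in $S^+$ does not commute with the Schrödinger flow $e^{\mp it\partial_z^2/2}$ appearing through \eqref{n3}, producing commutators $[z,e^{it\partial_z^2/2}]=it\partial_z e^{it\partial_z^2/2}$ that grow linearly in $t$. These must either be routed onto the high-frequency factor $Q_AF$, whose $A^{-M}$ gain (with $M$ chosen large) absorbs the $t$-growth, or traded against the $(1-\partial_z^2)^4$ component of the $S^+$-norm; both routes close since $N\ge 8$ leaves ample slack, but the bookkeeping is the reason the exponent $\tfrac{1}{13}$ in \eqref{h2}–\eqref{h3} takes this somewhat opaque form rather than something cleaner.
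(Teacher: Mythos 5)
Your argument for \eqref{h1} is fine and is essentially the paper's (it reduces to \cite[Lemma 4.2]{MS}): the $Z$-norm of the output has much lower regularity than the $S$-norm of the inputs, so the highest dyadic block genuinely yields a factor $\max(A,B,C)^{-M}$ and the sum over $\max(A,B,C)\ge (T/\al)^{1/6}$ closes. But for \eqref{h2}--\eqref{h3} your key claim, namely
\[
\|\cN_*^t[Q_AF,Q_BG,Q_CH]\|_{S}\ \lesssim\ (1+|t|)^{-1}A^{-M}\|F\|_S\|G\|_S\|H\|_S,\qquad A=\max(A,B,C),
\]
is false in the high--low--low regime $B,C\ll A$. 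The output is measured in $S$, which contains $\Sigma_0^N$ with the \emph{same} number of derivatives as the inputs; since the product is localized at $z$-frequency $\sim A$, all $N$ derivatives of the output land back on $Q_AF$, and $A^{N}\|Q_AF\|_{L^2}\simeq\|Q_AF\|_{\Sigma_0^N}$ gives no gain whatsoever. A gain of $A^{-M}$ from regularity is only available when at least two of the three frequencies are comparable to the maximum (so that the derivatives can be routed onto one high factor while the other is put in a weak norm); that is exactly the case $(A,B,C)\notin\Lmd$, which the paper dispatches as in \cite{MST}. Your scheme therefore cannot produce the $\al^{1/13}T^{-1/13}$ decay for the sum restricted to $\Lmd$ (one high, two low frequencies), which is the actual content of \eqref{h2}--\eqref{h3}.

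The paper closes this case by a genuinely different mechanism that your proposal does not contain: it partitions $[T/2,T]$ into $\#J\lesssim T^{1/12}\al^{-1/12}$ subintervals $I_j$ of length $\al^{1/12}T^{11/12}$, uses the $\partial_t$ component of the $X_T$ (resp.\ $X_T^+$) norm to replace $F(t),G(t),H(t)$ by their values at $t_j$ up to an acceptable error ($E_1$), and then estimates the frozen piece ($E_2$) by duality with the bilinear space--time estimate of Lemma \ref{bi2} (a $Q_\lmd$--$Q_\mu$ bilinear bound for the $e^{-it\cD_0^\ep}$ flow, proved via the lens transform so as to be uniform in $\ep$, with Corollary \ref{bi3} handling the $\theta$-average in the $\cN_0^t$ case). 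This dispersive bilinear gain of $\max(A,B,C)^{-1}$ per time interval, combined with the count of intervals, is what produces $\al^{1/13}T^{-1/13}$; note that your proposal never uses the $\partial_t$ part of the $X_T$ norm, which is a structural signal that the time-integrated statement cannot be reached by fixed-time estimates alone. The secondary points in your write-up (the $(1+|t|)^{-1}$ decay via the 1D dispersive estimate, the commutator $[z,e^{it\partial_z^2/2}]$ bookkeeping for $S^+$) are reasonable but do not repair this gap.
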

\quad
\begin{proof}
The proof of \eqref{h1} is the same as \cite[Lemma 4.2]{MS}. We prove \eqref{h2}. We denote by “$\text{med}(A,B,C)$” the second largest dyadic number along $(A,B,C)$ and define
\[ \Lmd:=\{(A,B,C)\in 2^{\N_0} : \max(A,B,C)\ge \Big(\f{T}{\al}\Big)^{\f{1}{6}} \text{ and } \text{med}(A,B,C)\le \Big(\f{T}{\al}\Big)^{\f{1}{6}}/64 \}.\]
The case when $(A,B,C) \notin \Lmd$ is treated as in \cite{MST}, so we omit. \\

\noindent
We decompose 
\[ [ T/2, T] = \bigcup_{j\in J} I_j ,\quad  I_j =[j\al^{\f{1}{12}}T^{\f{11}{12}}, (j+1)\al^{\f{1}{12}}T^{\f{11}{12}}] \cap [T/2,T]=[t_j,t_{j+1}],\]
 where $\#J \lesssim T^{\f{1}{12}} \al^{-\f{1}{12}} $ holds.
Then, 
\[ \Big\| \sum_{(A,B,C)\in \Lmd} \int_{T/2}^T \cN_*^t[Q_AF(t),Q_BG(t),Q_CH(t)]dt \Big\|_{S} \lesssim E_1+E_2\]
where
\begin{equation*}
\begin{split}
E_1:=\Big\|  \sum_{j\in J} \sum_{(A,B,C)\in \Lmd} \int_{I_j}
 \Big(\cN_*^t[Q_AF(t)&,Q_BG(t),Q_CH(t)] \\
 & -\cN_*^t[Q_AF(t_j),Q_BG(t_j),Q_CH(t_j)]\Big) dt\Big \|_S
\end{split}
\end{equation*} 
\[ E_2:=  \sum_{j\in J} \sum_{(A,B,C)\in \Lmd} \Big\|\int_{I_j}   \cN_*^t[Q_AF(t_j),Q_BG(t_j),Q_CH(t_j)] dt\Big \|_S.\]
$E_1$ is bounded as
\begin{equation}\label{E1}
 E_1\le \sum_{j\in J} \int_{I_j}   E_{1,j}(t) dt
\end{equation}
where
\begin{equation*}
\begin{split}
E_{1,j}:= \Big\| \sum_{(A,B,C)\in \Lmd} \Big (\cN_*^t[Q_AF(t),&Q_BG(t),Q_CH(t)]\\
&-\cN_*^t[Q_AF(t_j),Q_BG(t_j),Q_CH(t_j)]\Big)  \Big \|_S.
\end{split}
\end{equation*} 
We can write
\begin{equation*}
\begin{split}
&\sum_{(A,B,C)\in \Lmd} \cN_*^t[Q_AF,Q_BG,Q_CH] \\
&= \cN_*^t[Q_{+}F,Q_{-}G,Q_{-}H] +   \cN_*^t[Q_{-}F,Q_{+}G,Q_{-}H]  +  \cN_*^t[Q_{-}F,Q_{-}G,Q_{+}H] 
\end{split}
\end{equation*} 
where $Q_+:= Q_{\ge(\f{T}{\al})^{\f{1}{6}}}$, $Q_-:= Q_{\le(\f{T}{\al})^{\f{1}{6}}/64}$. Using \cite[Lemma 2.2]{MS} or Lemma \ref{N0} with \eqref{emb}, and the boundedness of $Q_{\pm}$ on $S$, we have
 \begin{equation*}
\begin{split}
E_{1,j} \lesssim (1+|t|)^{-1} [& \|F(t)-F(t_j)\|_{S} \|G(t)\|_{S} \| H(t)\|_{S} \\
&+ \|F(t_j)|\|_{S} \|G(t)-G(t_j)\|_{S} \|H(t)\|_{S} \\
&+\|F(t_j)\|_{S} \|G(t_j)\|_{S} \|H(t)-H(t_j)\|_{S}].
\end{split}
\end{equation*} 
Since $|t-t_j|\le \al^{\f{1}{12}}T^{\f{11}{12}}$ holds for any $t \in I_j$, we see from the definition of $X_T$ norm that
\[ \|F(t)-F(t_j)\|_{S} \le \int_{t_j}^t \|\partial_t F(s)\|_{S} ds \lesssim \al^{\f{1}{12}}T^{-\f{1}{12}+3\delta}.\]
Similar bounds hold for $G$ and $H$. Hence we have
\[ E_{1,j} \lesssim \al^{\f{1}{12}}T^{-\f{13}{12}+5\delta} \|F\|_{X_T} \|G\|_{X_T} \|H\|_{X_T}. \] 
Applying this bound to \eqref{E1}, the contribution of $E_1$ is acceptable:
\[E_1 \lesssim \al^{\f{1}{12}}T^{-\f{1}{12}+5\delta} \|F\|_{X_T} \|G\|_{X_T} \|H\|_{X_T} . \]

\noindent
For $E_2$, by \cite[Lemma 6.1]{MS} we only have to estimate 
\[ \Big\|\int_{I_j}   \cN_*^t[Q_AF^a(t_j),Q_BF^b(t_j),Q_CF^c(t_j)] dt\Big \|_{L_{\bfx}^2}. \]
We use duality. Let $K \in L_\bfx^2$ and $F^a, F^b, F^c \in S$, 
\begin{equation*}
\begin{split}
\cA_K:=\Big\langle K,   \int_{I_j}  \cN_*^t[Q_AF^a,Q_BF^b,Q_CF^c] dt \Big\rangle_{L_{x,z}^2} .
\end{split}
\end{equation*}
In the  $\cN_*^t=\cN^t$ case, 
\begin{equation*}
	\begin{split}
\cA_K=& \int_{I_j}  \int_{\R^3} (e^{-it\cD_0^\ep}Q_AF^a) \overline{(e^{-it\cD_0^\ep}Q_BF^b)}(e^{-it\cD_0^\ep}Q_CF^c) \overline{(e^{-it\cD_0^\ep}K)} dxdzdt.
\end{split}
\end{equation*}
By Plancherel's theorem, we may assume that $K=Q_DK$, $D\simeq \max(A,B,C)$. Using H\"{o}lder's inequality and Lemma \ref{bi2} we have
\[ \cA_K \lesssim (\max(A,B,C))^{-1}\min_{\sigma \in \fS^3}\|F^{\sigma(a)}\|_{L_\bfx^2} \|F^{\sigma(b)}\|_{S}\|F^{\sigma(c)}\|_{S} .\]
Therefore
\begin{equation*}
\begin{split}
E_2&\lesssim  \sum_{j\in J} \sum_{(A,B,C)\in \Lmd}  (\max(A,B,C))^{-1} \|F(t_j)\|_{S} \|G(t_j)\|_{S}\|H(t_j)\|_{S} \\
&\lesssim \sum_{j\in J} \Big( \f{T}{\al}\Big)^{-\f{1}{6}+\f{1}{100}} \|F\|_{X_T}\|G\|_{X_T}\|H\|_{X_T}\\
&\lesssim \al^{\f{1}{13}} T^{-\f{1}{13}}\|F\|_{X_T}\|G\|_{X_T}\|H\|_{X_T}.
\end{split}
\end{equation*}
The proof of \eqref{h3} is similar.\\

\noindent
In the  $\cN_*^t=\cN_0^t$ case, 
\begin{equation*}
	\begin{split}
		\cA_K=& \int_0^{2\pi} \int_{I_j}   \int_{\R^3} (e^{-i\tilde{\cD}(\theta, t)}Q_AF^a) \overline{(e^{-i\tilde{\cD}(\theta, t)}Q_BF^b)}(e^{-i\tilde{\cD}(\theta, t)}Q_CF^c) \overline{(e^{-i\tilde{\cD}(\theta, t)}K)} dxdzdtd\theta,
	\end{split}
\end{equation*}
where $\tilde{\cD}(\theta, t):=\theta\cH_0-t\partial_z^2/2$.
We decompose the integral interval and change the variable for $\theta$, and apply Corollary \ref{bi3}. Then we have
\begin{equation*}
	\begin{split}
			\cA_K &=  \int_0^{\f{\pi}{2}} +\int_{\f{\pi}{2}}^{\f{3\pi}{2}}+ \int_{\f{3\pi}{2}}^{2\pi}   \\
			&\qquad  \Big[ \int_{I_j \times \R^3} (e^{-i\tilde{\cD}(\theta, t)}Q_AF^a) \overline{(e^{-i\tilde{\cD}(\theta, t)}Q_BF^b)}(e^{-i\tilde{\cD}(\theta, t)}Q_CF^c) \overline{(e^{-i\tilde{\cD}(\theta, t)}K)} dxdzdt \Big] d\theta \\
			&= \int_0^{\f{\pi}{2}} +\int_{\f{3\pi}{2}}^{2\pi}     \\
			&\qquad  \Big[  \int_{I_j \times \R^3}(e^{-i\tilde{\cD}(\theta, t)}Q_AF^a) \overline{(e^{-i\tilde{\cD}(\theta, t)}Q_BF^b)}(e^{-i\tilde{\cD}(\theta, t)}Q_CF^c) \overline{(e^{-i\tilde{\cD}(\theta, t)}K)} dxdz dt  \Big]d\theta \\
				&\quad + \int_{-\f{\pi}{2}}^{\f{\pi}{2}} \Big[ \int_{I_j \times \R^3}  (e^{-i\tilde{\cD}(\theta, t)}Q_A e^{-i\pi\cH_0} F^a) \overline{(e^{-i\tilde{\cD}(\theta, t)}Q_B  e^{-i\pi\cH_0}F^b)}\\
			&\qquad \qquad \qquad \qquad \qquad \times (e^{-i\tilde{\cD}(\theta, t)}Q_C  e^{-i\pi\cH_0}F^c) \overline{(e^{-i\tilde{\cD}(\theta, t)}  e^{-i\pi\cH_0}K)} 
			dxdz dt \Big] d\theta \\
			&\lesssim \int_{-\f{\pi}{2}}^{\f{\pi}{2}} \Big(1+\tan^2\Big(\f{\theta}{2}\Big)\Big)^\h d\theta \times (\max(A,B,C))^{-1}\min_{\sigma \in \fS^3}\|F^{\sigma(a)}\|_{L_\bfx^2} \|F^{\sigma(b)}\|_{S}\|F^{\sigma(c)}\|_{S} \\
			&\lesssim (\max(A,B,C))^{-1}\min_{\sigma \in \fS^3}\|F^{\sigma(a)}\|_{L_\bfx^2} \|F^{\sigma(b)}\|_{S}\|F^{\sigma(c)}\|_{S}.
	\end{split}	
\end{equation*}

\end{proof}
To end the proof of the above lemma, we used the following result.  This is a simple modification of \cite[Lemma 4.4]{MS}.\\

\begin{lemma}\label{bi2}
Denote by $Q_{\lmd}$ and $Q_\mu$ the frequency localization in $z$. Assume that $\lmd\ge10\mu\ge1$ and that $u(t)=e^{-itD_0^\ep}u_0$ and $v(t)=e^{-itD_0^\ep}v_0$, where $u_0$, $v_0 \in L_z^2\Sigma_x^2(\R^3)$. Then we have the bound 
\begin{equation}
\begin{split}
	\|(Q_{\lmd}u)(Q_{\mu}v)\|_{L_{t,\bfx}^2(\R\times\R^3)} \lesssim \lmd^{-\f{1}{2}} \min\{\|u_0\|_{L_{\bfx}^2} \|v_0\|_{L_z^2\Sigma_x^2}, \|v_0\|_{L_{\bfx}^2} \|u_0\|_{L_z^2\Sigma_x^2}\}.
\end{split}
\end{equation}
\end{lemma}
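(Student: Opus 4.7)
The plan is to adapt the classical $1$-dimensional bilinear Strichartz estimate
\[ \|(Q_\lmd e^{it\partial_z^2/2}f)(Q_\mu e^{it\partial_z^2/2}g)\|_{L_{t,z}^2(\R\times\R)}\lesssim \lmd^{-1/2}\|f\|_{L_z^2}\|g\|_{L_z^2}\qquad (\lmd\ge 10\mu) \]
to the present setting, where $\cD_0^\ep=\f{1}{\ep^2}\cH_0-\h\partial_z^2$ contains an extra Hermite oscillator. The key observation is that $e^{-it\cD_0^\ep}=e^{-it\cH_0/\ep^2}e^{it\partial_z^2/2}$ factors into commuting pieces, and the factor $e^{-it\cH_0/\ep^2}$ merely rotates the phase on each eigenspace $E_n$, so it preserves both the $L_x^2$ and $\Sigma_x^s$ norms and commutes with $Q_\lmd$ and $Q_\mu$.

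First I will write the $L_{t,\bfx}^2$ norm as an iterated integral $L_{t,z}^2 L_x^2$ and estimate pointwise in $(t,z)$ by H\"older together with the $2$D Sobolev embedding $\Sigma_x^2(\R^2)\hookrightarrow L_x^\infty(\R^2)$, obtaining
\[ \|(Q_\lmd u)(Q_\mu v)(t,\cdot,z)\|_{L_x^2}^2\lesssim \|(Q_\lmd u)(t,\cdot,z)\|_{L_x^2}^2\,\|(Q_\mu v)(t,\cdot,z)\|_{\Sigma_x^2}^2. \]
By the unitarity remark above, each factor on the right equals the same norm of $Q_\lmd e^{it\partial_z^2/2}u_0$ and $Q_\mu e^{it\partial_z^2/2}v_0$ respectively, with the Hermite propagator stripped off. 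Expanding $u_0(x,z)=\sum c_{n_1,n_2}(z)h_{n_1,n_2}(x)$ and $v_0(x,z)=\sum d_{m_1,m_2}(z)h_{m_1,m_2}(x)$ in the Hermite basis and invoking orthonormality of the $h_{n_1,n_2}$ then yields
\[ \|(Q_\lmd u)(t,\cdot,z)\|_{L_x^2}^2=\sum_{n_1,n_2}|(Q_\lmd e^{it\partial_z^2/2}c_{n_1,n_2})(z)|^2,\qquad \|(Q_\mu v)(t,\cdot,z)\|_{\Sigma_x^2}^2=\sum_{m_1,m_2}\lmd_{m_1+m_2}^2|(Q_\mu e^{it\partial_z^2/2}d_{m_1,m_2})(z)|^2. \]

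Multiplying these expansions, integrating over $(t,z)$, exchanging summation and integration, and applying the classical $1$-dimensional bilinear Strichartz inequality to each pair $(c_{n_1,n_2},d_{m_1,m_2})$ will produce a factor $\lmd^{-1}$ multiplied by
\[ \Bigl(\sum_{n_1,n_2}\|c_{n_1,n_2}\|_{L_z^2}^2\Bigr)\Bigl(\sum_{m_1,m_2}\lmd_{m_1+m_2}^2\|d_{m_1,m_2}\|_{L_z^2}^2\Bigr)=\|u_0\|_{L_\bfx^2}^2\|v_0\|_{L_z^2\Sigma_x^2}^2, \]
which is the first half of the minimum. The symmetric bound is obtained by placing the Sobolev embedding on the high-frequency factor $Q_\lmd u$ and the bare $L_x^2$ norm on $Q_\mu v$ instead. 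I do not anticipate any substantive obstacle: once the commutation and unitarity facts for $e^{-it\cH_0/\ep^2}$ are noted, the argument reduces to Hermite orthogonality in the $x$-variable combined with the standard $1$D bilinear Strichartz in the $z$-variable, and in particular the Hermite phases $e^{-it\lmd_n/\ep^2}$ disappear after taking absolute values, so the constant in the final estimate is independent of $\ep$ as required.
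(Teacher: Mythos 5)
Your proof is correct, but it takes a genuinely different route from the paper's. The paper proves Lemma~\ref{bi2} via the lens transform: on the time set $A$ where $\tan(t/2\ep^2)$ stays bounded it converts $e^{-it\cH_0/\ep^2}$ into a free flow $e^{i\tan(t/2\ep^2)\Delta_x}$, then uses Plancherel in $x$, Minkowski's inequality and the one-dimensional bilinear Strichartz estimate \cite[Lemma 4.3]{MS} in $z$, and handles the complementary times by conjugating with $e^{-i\pi\cH_0}$; the $\Sigma_x^2$ norm enters there through the summability of the $x$-Fourier transform of $Q_\mu v_0$. You instead diagonalize $e^{-it\cH_0/\ep^2}$ in the Hermite basis, observe that the phases $e^{-it\lmd_n/\ep^2}$ are unimodular and therefore disappear from the $L_x^2$ and $\Sigma_x^2$ norms taken pointwise in $(t,z)$, use H\"older together with $\Sigma_x^2\hookrightarrow L_x^\infty$ in the $x$-variable, and then apply the same frequency-separated 1D bilinear estimate termwise to the coefficient pairs $(c_{n_1,n_2},d_{m_1,m_2})$ before resumming; the exchange of sum and integral is justified since all integrands are nonnegative. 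Your argument is more elementary (no lens transform, no splitting of the time axis into $A$ and $A+\pi\ep^2$), makes the $\ep$-independence of the constant transparent, and, since a fixed unitary $e^{-i\theta\cH_0}$ is likewise an isometry on $L_x^2$ and $\Sigma_x^2$, it would even yield Corollary~\ref{bi3} without the $(1+\tan^2(\theta/2))^{1/2}$ loss that the lens-transform computation produces. Both proofs ultimately rest on the same 1D bilinear Strichartz inequality in $z$ and both require $\Sigma_x^2$ control of one factor, yours through the Sobolev embedding and the paper's through its Fourier-side Young-type bound.
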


\begin{proof}
	Let $A=\bigcup_{n\in\Z}\{(2n-\h)\pi\ep^2, (2n+\h\pi)\ep^2\}$, $\R=A\cup A+\pi\ep^2$. By the translation invariance and the unitarity of the flow of $\{e^{it\cH_0/\ep^2}\}_{t\in\R}$  on $L_x^2$ and $\Sigma_x^2$, it is sufficient to to prove \eqref{bi2} on $A\times \R^3$. 
	Let $f \in L^2(\R^d)$ and $H:=-\Delta_x+\om|x|^2(\om>0, x\in\R^d)$, and denote $\tilde{u}(t,\cdot)=e^{it\Delta_x/2}f$ and $\tilde{v}(t,\cdot)=e^{-itH/2}f$. Then the lens transform (cf.~\cite{pseudo}) gives
	\[ \tilde{v}(t,x) = (1+\tan^2(\om t))^{\f{d}{4}}e^{-i\f{\om}{2}|x|^2\tan(\om t)} \tilde{u}\Big(\f{\tan(\om t)}{\om}, \sqrt{1+\tan^2(\om t)} x\Big).\]
	Hence for $t\in A$, we have
\begin{equation*}
	\begin{split}
	&(e^{-it\cH_0/\ep^2}e^{it\partial_z^2/2}Q_\lmd u_0)(\bfx)(e^{-it\cH_0/\ep^2}e^{it\partial_z^2/2}Q_\mu v_0)(\bfx)\\
	&=\Big(1+\tan^2\Big(\f{t}{2\ep^2}\Big)\Big)\Big(e^{i\tan(\f{t}{2\ep^2})\Delta_x}e^{it\partial_z^2/2}Q_\lmd u_0\Big(\sqrt{1+\tan\Big(\f{t}{2\ep^2}\Big)}x,z\Big)\Big) 	\\
	&\quad  \times \Big(e^{i\tan(\f{t}{2\ep^2})\Delta_x}e^{it\partial_z^2/2}Q_\mu v_0\Big(\sqrt{1+\tan\Big(\f{t}{2\ep^2}\Big)}x,z\Big)\Big)e^{-\f{i}{2}|x|^2\tan(\f{t}{2\ep^2})}.
	\end{split}
\end{equation*}
Since $|1+\tan(\f{t}{2\ep^2})|\le 2$ holds  for any $t\in A$, by taking $L_x^2$ norm on both sides of the above equality, changing variables in $x$ and applying Plancherel's theorem, we have
\begin{equation*}
	\begin{split}
		&\|(e^{it\cD_0^\ep}Q_\lmd u_0)( e^{it\cD_0^\ep}Q_\mu v_0)\|_{L_x^2}\\
		&\lesssim\Big(1+\tan^2\Big(\f{t}{2\ep^2}\Big)\Big)^{\h}\|(e^{it\partial_z^2/2}e^{i\tan(\f{t}{2\ep^2})\Delta_x}Q_\lmd u_0)(e^{it\partial_z^2/2}e^{i\tan(\f{t}{2\ep^2})\Delta_x}Q_\mu v_0)\|_{L_x^2}\\
		&\lesssim \Big\|\int_{\R_\eta^2}(e^{i\tan(\f{t}{2\ep^2})(|\eta|^2+|\xi-\eta|^2)}) (e^{it\partial_z^2/2}\cF_x[Q_\lmd u_0](\eta,z)) (e^{it\partial_z^2/2}\cF_x[Q_\mu v_0](\xi-\eta,z)) d\eta \Big\|_{L_\xi^2} \\
		&\le	\Big\|\int_{\R_\eta^2}|(e^{it\partial_z^2/2}\cF_x[Q_\lmd u_0](\eta,z)) (e^{it\partial_z^2/2}\cF_x[Q_\mu v_0](\xi-\eta,z))| d\eta \Big\|_{L_\xi^2}	
	\end{split}
\end{equation*}
Taking $L_{t,z}^2(A\times\R)$ norm, using Minkowski's inequality and \cite[Lemma 4.3]{MS}, we have
\begin{equation*}
	\begin{split}
	&\|(e^{it\cD_0^\ep}Q_\lmd u_0)( e^{it\cD_0^\ep}Q_\mu v_0)\|_{L_x^2}\\
	&\lesssim \lmd^{-\h} \Big\| \int_{\R_\eta^2}\|\cF_z[Q_\lmd u_0](\eta,z)\|_{L_z^2}  \|\cF_z[Q_\mu v_0](\xi-\eta,z)\|_{L_z^2} d\eta  \Big\|_{L_\xi^2} \\
	&\lesssim \lmd^{-\h} \|u_0\|_{L_\bfx^2} \|v_0\|_{L_z^2\Sigma_x^2}.
	\end{split}
\end{equation*}
Note that the last inequality holds for $\lmd^{-\h} \|v_0\|_{L_\bfx^2} \|u_0\|_{L_z^2\Sigma_x^2}$.
Replacing $u_0$, $v_0$ with  $e^{-i\pi\cH_0}e^{i\pi\ep^2\partial_z^2/2}u_0$, $e^{-i\pi\cH_0}e^{i\pi\ep^2\partial_z^2/2}v_0$ respectively, we obtain the same estimate on $(A+\pi\ep^2)\times\R^3$.
\end{proof}

\begin{cor}\label{bi3}
	Suppose $Q_{\lmd}$, $Q_\mu$, assumptions for $\lmd$, $\mu$, $u_0$ and $v_0$ are the same as in Lemma \ref{bi2}. Then for  $u(\theta, t)=e^{-i\theta\cH_0+it\partial_z^2/2}u_0$ and $v(\theta,t)=e^{-i\theta\cH_0+it\partial_z^2/2}v_0$, we have the bound 
	\begin{equation}
		\begin{split}
			\|(Q_{\lmd}u)(\theta, \cdot)&(Q_{\mu}v)(\theta, \cdot)\|_{L_{t,\bfx}^2(\R\times\R^3)} \\
			&\lesssim \Big(1+\tan^2\Big(\f{\theta}{2}\Big)\Big)^\h \lmd^{-\f{1}{2}} \min\{\|u_0\|_{L_{\bfx}^2} \|v_0\|_{L_z^2\Sigma_x^2}, \|v_0\|_{L_{\bfx}^2} \|u_0\|_{L_z^2\Sigma_x^2}\}.
		\end{split}
	\end{equation}
\end{cor}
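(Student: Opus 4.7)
My strategy is to follow the proof of Lemma \ref{bi2} nearly verbatim, but without the device of restricting the time variable to the special set $A$ on which the lens-transform prefactor is uniformly bounded. The restriction to $A$ in Lemma \ref{bi2} only served to absorb the factor $(1+\tan^2(\cdot))^{1/2}$ into a constant; in Corollary \ref{bi3} we simply expose this factor.

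Concretely, I would first apply the lens transform to the half-plane harmonic-oscillator flow $e^{-i\theta \cH_0}$ acting in the $x$-variables: since $\cH_0$ is (up to normalization) the $2$D harmonic oscillator, one has the pointwise identity
\begin{equation*}
(e^{-i\theta\cH_0}f)(x) \;=\; \Big(1+\tan^2\Big(\tfrac{\theta}{2}\Big)\Big)^{\!1/2} e^{-\frac{i}{2}|x|^2\tan(\theta/2)}\bigl(e^{i\tan(\theta/2)\Delta_x}f\bigr)\Bigl(\sqrt{1+\tan^2(\theta/2)}\,x\Bigr),
\end{equation*}
valid for $\theta$ away from $\pm\pi$ (the regime relevant to the intended application in Lemma \ref{high}, where one has already reduced to $\theta\in(-\pi/2,\pi/2)$ via the rotation $\theta\mapsto\theta-\pi$ and the identification $e^{-i\pi\cH_0}$). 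This identity commutes with $e^{it\partial_z^2/2}$ and with the $z$-frequency projection $Q_\lambda$, the quadratic phase in $x$ is unimodular, and the $x$-dilation is removed by a change of variables, so neither affects $L^2_x$ norms.

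Once the $x$-direction flow has been replaced by the free Schr\"odinger flow $e^{i\tan(\theta/2)\Delta_x}$, I would compute the product $(Q_\lambda u)(\theta,\cdot)\,(Q_\mu v)(\theta,\cdot)$ in the Fourier variable $\eta$ dual to $x$, exactly as in the display chain of Lemma \ref{bi2}. Pulling in absolute values past the phase $e^{i\tan(\theta/2)(|\eta|^2+|\xi-\eta|^2)}$, applying Plancherel in $x$ and Minkowski's inequality, one is reduced to the convolution
\begin{equation*}
\Big\|\int_{\R^2_\eta} \|\cF_x[Q_\lambda u_0](\eta,\cdot)\|_{L^2_z}\,\|\cF_x[Q_\mu v_0](\xi-\eta,\cdot)\|_{L^2_z}\, d\eta\Big\|_{L^2_\xi},
\end{equation*}
multiplied by the lens prefactor $(1+\tan^2(\theta/2))^{1/2}$. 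Taking the $L^2_{t,z}$ norm and applying the $1$D bilinear Strichartz bound \cite[Lemma 4.3]{MS} to the free flow $e^{it\partial_z^2/2}$ produces the gain $\lambda^{-1/2}$ and the right-hand side $\|u_0\|_{L^2_\bfx}\|v_0\|_{L^2_z\Sigma^2_x}$. Interchanging the roles of $u_0$ and $v_0$ at the bilinear estimate step gives the symmetric bound, and the minimum of the two yields the stated estimate.

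The bulk of the argument is therefore mechanical. The only real point is the bookkeeping for the lens transform: in Lemma \ref{bi2} the authors carefully worked on the two complementary sets $A$ and $A+\pi\ep^2$ to keep $(1+\tan^2)^{1/2}$ bounded, whereas here we simply accept this factor as part of the final estimate. I do not anticipate a substantive obstacle; the lemma is effectively an unrolled version of Lemma \ref{bi2} that makes the $\theta$-dependence explicit for later use inside the $\theta$-integral defining $\cN_0^t$.
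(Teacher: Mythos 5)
Your proposal is correct and is essentially the argument the paper intends: Corollary \ref{bi3} is stated as a direct modification of Lemma \ref{bi2}, obtained by rerunning that proof with the $x$-flow parameter $\theta$ fixed (decoupled from $t$), so the lens-transform prefactor $(1+\tan^2(\theta/2))^{1/2}$ is constant in $t$ and simply factors out of the $L^2_{t,\bfx}$ norm instead of being absorbed via the restriction to the set $A$. The minor normalization discrepancies in the lens-transform identity (the free time $\tan(\theta/2)$ versus $2\tan(\theta/2)$ and the quadratic-phase coefficient) are harmless, since the phase is unimodular and the $x$-time plays no role in the subsequent Plancherel--Minkowski--bilinear Strichartz chain.
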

\quad
\subsection{The fast Oscillations}
We decompose $\cN^t$ as
\begin{equation}\label{tildeN}
	\cN^t[F,G,H]=\cN_0^t[F,G,H]+\tilde{\cN^t}[F,G,H]
\end{equation} 
with (recall the notation \eqref{nlpr2} and \eqref{I})
\begin{equation*}
	\begin{split}
		 \tilde{\cN^t}[F,G,H]&=\sum_{\om\neq 0}\sum_{(p,q,r,s)\in\Gamma_\om} \Pi_p \cN^t[F_q,G_r,H_s] \\
		 &=\sum_{\om\neq 0} e^{i\f{\om}{2\ep^2}t}\sum_{(p,q,r,s)\in\Gamma_\om} \Pi_p \cI^t[F_q,G_r,H_s].
	\end{split} 
\end{equation*} 
\begin{lemma}\label{fast}
Fix $\ep\in(0,1)$. Let  $T\in [1,T^*]$ and $\al\in (0,1]$. Assume that $F,G,H:\R\to S$ satisfy \eqref{asX} and
\begin{equation}\label{suppl}
 F=Q_{\le(T/\al)^{\f{1}{6}}}F, \quad G=Q_{\le(T/\al)^{\f{1}{6}}}G, \quad H=Q_{\le(T/\al)^{\f{1}{6}}}H .
 \end{equation}
Then we can write
\[\tilde{\cN^t}[F(t),G(t),H(t)]=\tilde{\cE_1^t}[F(t),G(t),H(t)]+\cE_2^t[F(t),G(t),H(t)]=:\tilde{\cE_1^t}(t)+ \cE_2^t(t)\]
and the following estimates hold uniformly in $T\in[1, T^*]$, $\al$ and $\ep$ {\rm:}
\[ T^{\f{17}{16}}\sup_{T/4\le t \le T^*}\|\tilde{\cE_1}(t)\|_S \lesssim \ep^{2}\al^{-1}+\al^{\f{1}{13}}, \qquad T^{\f{1}{2}}\sup_{T/4\le t \le T^*}\|\cE_3(t)\|_S \lesssim \ep^2. \]
where $\cE_2(t)=\partial_t\cE_3(t)$. Assuming that \eqref{asX+} holds instead of \eqref{asX}, we have
\[ T^{\f{17}{16}}\sup_{T/4\le t \le T^*}\|\tilde{\cE_1}(t)\|_{S^+} \lesssim \ep^{2}\al^{-1} +\al^{\f{1}{13}}, \qquad T^{\f{1}{2}}\sup_{T/4\le t \le T^*}\|\cE_3(t)\|_{S^+} \lesssim \ep^2. \]

\end{lemma}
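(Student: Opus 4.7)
I would prove the decomposition by a time integration-by-parts (normal form) argument that exploits the fast oscillation factor $e^{it\om/(2\ep^2)}$ present in $\tilde{\cN}^t$ for each $\om\neq 0$. A naive IBP against the Hermite phase alone would produce a time derivative of $\cI^t$, which brings down $\partial_z^2$ and, on the $z$-frequency support of \eqref{suppl}, costs a factor $(T/\al)^{1/3}$ --- enough to break the desired bound. The key trick I would use is to combine the Hermite phase with the $z$-direction phase $\e\ka$ arising in $\cF_z\cI^t$: writing
\begin{equation*}
\cF_z\tilde{\cN}^t(F,G,H)(\z)=\sum_{\om\neq 0}\sum_{(p,q,r,s)\in\Gamma_\om}\Pi_p\int e^{it\Phi(\om,\e,\ka)}\hat{F}_q(\z-\e)\overline{\hat{G}_r(\z-\e-\ka)}\hat{H}_s(\z-\ka)\,d\e\,d\ka
\end{equation*}
with total phase $\Phi(\om,\e,\ka)=\om/(2\ep^2)+\e\ka$, and then integrating by parts against $\Phi$ instead of just $\om/(2\ep^2)$, the time derivative falls only on the amplitudes $\hat{F}_q,\hat{G}_r,\hat{H}_s$, so $\partial_z^2$ never appears.

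Concretely, with a smooth cutoff $\chi(\om,\e,\ka)$ adapted to the ``IBP-safe'' region $\{|\e\ka|\le |\om|/(8\ep^2)\}$ (on which $|\Phi|^{-1}\lesssim \ep^2/|\om|$), I would set
\begin{equation*}
\cF_z\cE_3(t):=\sum_{\om\neq 0}\sum_{(p,q,r,s)\in\Gamma_\om}\Pi_p\int \chi\cdot\frac{e^{it\Phi}}{i\Phi}\hat{F}_q(\z-\e)\overline{\hat{G}_r(\z-\e-\ka)}\hat{H}_s(\z-\ka)\,d\e\,d\ka
\end{equation*}
and take $\cE_2:=\partial_t\cE_3$. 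The $\ep^2/|\om|$ gain, combined with the one-dimensional dispersive decay of $\cI^t$ (the same mechanism producing the $t^{-1}$ bound in Lemma \ref{N0}) and with Hermite-Sobolev summability in $\om$ drawn from the $N$-regularity in the $S$ norm, delivers $\|\cE_3(t)\|_S\lesssim \ep^2 T^{-1/2}$. The remainder $\tilde{\cE}_1:=\tilde{\cN}^t-\cE_2$ then splits into two pieces. On the IBP-safe region, $\partial_t$ only hits $\hat{F},\hat{G},\hat{H}$ and is controlled by $\|\partial_t F\|_S\lesssim T^{-1+3\delta}$ from the $X_T$ norm, producing, after summation, the $T^{-17/16}\ep^2/\al$ contribution (the $\al^{-1}$ absorbing the loss of the cutoff). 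On the complementary ``near-resonant'' region $|\om|\lesssim \ep^2(T/\al)^{1/3}$, IBP is not available and I would apply Lemma \ref{high}-type bounds directly, via the bilinear Strichartz estimate of Lemma \ref{bi2} and Corollary \ref{bi3}, producing the $\al^{1/13}$ contribution.

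The $S^+$ bound follows by the same scheme, the additional weights $z$ and $(1-\partial_z^2)^4$ being handled by standard commutator estimates together with \eqref{suppl}: the weight $z$ couples via Parseval to frequency derivatives in $\z,\e,\ka$ of the amplitude, which are controlled by the $\|zF\|_S$ term, and the operator $(1-\partial_z^2)^4$ is absorbed using the $z$-frequency truncation. The main obstacle I anticipate is the bookkeeping required to choose the cutoff threshold so that the derivative-correction piece and the near-resonant piece combine \emph{additively} to the stated $\ep^2/\al+\al^{1/13}$: the $\al^{-1}$ loss in the first piece must balance against the cardinality $\lesssim \ep^2(T/\al)^{1/3}$ of near-resonant modes in the second, and the Hermite summation over $(p,q,r,s)\in\Gamma_\om$ must close uniformly in $\om$ using only the regularity encoded in the $S$ norm, analogously to the summation step of \cite[Lemma 6.1]{MS}.
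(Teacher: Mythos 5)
The decisive gap is your treatment of the complementary (``near-resonant'') region $\{|\e\ka|>|\om|/(8\ep^2)\}$. Under \eqref{suppl} this region is nonempty as soon as $\ep^2(T/\al)^{1/3}\gtrsim 1$, i.e.\ $T\gtrsim \al\,\ep^{-6}$, and there you must still produce a \emph{pointwise-in-time} bound of size $\al^{\f{1}{13}}T^{-\f{17}{16}}$ in $S$. The tools you invoke cannot give this: Lemma \ref{high}, Lemma \ref{bi2} and Corollary \ref{bi3} are $L^2_{t,\bfx}$ space-time estimates that only control time-integrated quantities $\|\int_{T/2}^T\cdots\,dt\|$, and their gain comes from a large \emph{individual} $z$-frequency $\max(A,B,C)\ge (T/\al)^{1/6}$, whereas by \eqref{suppl} all individual frequencies are already below that threshold -- on your bad region only the product $|\e\ka|$ is large, which these lemmas do not see. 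Nor does your counting argument close: with $|\om|\lesssim\ep^2(T/\al)^{1/3}$ admissible values of $\om$ and the best trivial pointwise bound $t^{-1}$ for each piece, you get $\ep^2\al^{-1/3}T^{-2/3}$, which for large $T$ is far weaker than $\ep^2\al^{-1}T^{-\f{17}{16}}+\al^{\f{1}{13}}T^{-\f{17}{16}}$. The paper handles exactly this configuration by a different mechanism: it splits with the multiplier $\varphi(\al t^{1/6}\e\ka)$ (threshold $|\e\ka|\sim\al^{-1}t^{-1/6}$, not $|\om|/\ep^2$), performs the time integration by parts against the Hermite phase alone on the small-$\e\ka$ part $\cO_2^t$, and on the large-$\e\ka$ part $\cO_1^t$ integrates by parts in the frequency variable $\ka$ (non-stationary phase, using that $t\e$ is large there), combined with the physical-space cutoffs $\varphi(\al z/T^{1/6})$ and the weighted dispersive bounds \eqref{inf1}--\eqref{inf2}; that is the actual source of the $\al^{\f{1}{13}}T^{-\f{17}{16}}$ term. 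Your proposal contains no substitute for this step.

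Two further points. First, your summability in $\om$ is asserted, not proved: the gain $\ep^2/|\om|$ still leaves $\sum_{\om\neq0}|\om|^{-1}$, and the claim that Hermite regularity in $S$ supplies the missing decay is delicate (when the largest Hermite index is the output $p$, no decay in $\om$ is extracted from the inputs). The paper avoids the issue entirely by writing $\f{e^{i\om t/2\ep^2}-1}{i\om/2\ep^2}=\int_{2\pi\ep^2\lfloor t/2\pi\ep^2\rfloor}^{t}e^{i\f{\om}{2\ep^2}\tau}d\tau$ and resumming over $\om$ into full and resonant nonlinearities, so the $\ep^2$ gain comes from the length of the $\tau$-interval and no $1/|\om|$ sum ever appears; this is also how $\|\cE_3(t)\|_S\lesssim\ep^2(1+t)^{-1+\delta/100}$ is obtained. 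Second, your kernel $\chi/\Phi$ is a product-type symbol in $(\e,\ka)$; to run the dispersive/multiplier estimate (\cite[Lemma 2.3]{MS}) one needs an $L^1$ bound on its inverse Fourier transform, which for such symbols requires the $I(S,y)$-type computation carried out in the paper and costs a $(1+t)^{\delta_0}$ factor -- and the bookkeeping for $\ep^2\al^{-1}$ is different from what you describe: it arises because $\partial_t$ of $e^{it\e\ka}\varphi(\al t^{1/6}\e\ka)$ costs $|\e\ka|\lesssim\al^{-1}t^{-1/6}$ on the support of $\cO_2^t$, not from ``absorbing a cutoff loss''.
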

\begin{proof}
We decompose $\tilde{\cN^t}$ as follows:
\begin{equation*}
	\begin{split}
	\cF_z \tilde{\cN^t}[F^a,F^b,F^c](x,\z)&= \sum_{\om\neq 0} e^{i\f{\om}{2\ep^2}t}\sum_{(p,q,r,s)\in\Gamma_\om} \Pi_p\Big(\cO_1^t[F_q^a,F_r^b,F_s^c](x,\z)+\cO_2^t[F_q^a,F_r^b,F_s^c](x,\z) \Big) \\
	&=: \cJ_1 +  \cJ_2.
\end{split}
\end{equation*}
where
\begin{equation}\label{ot1}
\begin{split}
	\cO_1^t[f^a,f^b,f^c](\z):=&\int_{\R^2}e^{it\e\kappa}(1-\varphi(\al t^{\f{1}{6}}\e \ka ))\hat{f^a}(\z-\e)\overline{\hat{f^b}(\z-\e-\kappa) } \hat{f^c}(\z-\kappa)d\e d\kappa,
\end{split}
\end{equation}
\begin{equation}
\begin{split}
	\cO_2^t[f^a,f^b,f^c](\z):=&\int_{\R^2}e^{it\e\kappa}\varphi(\al t^{\f{1}{6}}\e \ka ) \hat{f^a}(\z-\e)\overline{\hat{f^b}(\z-\e-\kappa) } \hat{f^c}(\z-\kappa)d\e d\kappa.
\end{split}
\end{equation}

\noindent
\underline{Estimate of $\cJ_1$} \quad  We have the relation
\begin{equation}\label{oibp}
\begin{split}
&e^{i\f{\om}{2\ep^2}t}\cO_2^t[f^a,f^b,f^c] \\
&=\partial_t\Big( \f{e^{i(\om/2\ep^2)t}-1}{i\om/2\ep^2} \cO_2^t[f^a,f^b,f^c]\Big) - \f{e^{i(\om/2\ep^2)t}-1}{i\om/2\ep^2} (\partial_t\cO_2^t)[f^a,f^b,f^c] \\
&\quad- \f{e^{i(\om/2\ep^2)t}-1}{i\om/2\ep^2} \Big(\cO_2^t[\partial_tf^a,f^b,f^c] +  \cO_2^t[f^a,\partial_tf^b,f^c] + \cO_2^t[f^a,f^b,\partial_tf^c] \Big)  ,
\end{split}
\end{equation}
where
\begin{equation*}
\begin{split}
 (\partial_t\cO_2^t)[f^a,f^b,f^c]:=\int_{\R^2}&\partial_t(e^{it\e\kappa}\varphi(\al t^{\f{1}{6}}\e \ka ))\hat{f^a}(\z-\e)\overline{\hat{f^b}(\z-\e-\kappa) } \hat{f^c}(\z-\kappa)d\e d\kappa.
\end{split}
\end{equation*}
Let $\lfloor \cdot \rfloor$ be floor function, then
\begin{equation*}
\begin{split}
 &\f{e^{i(\om/2\ep^2)t}-1}{i\om/2\ep^2} \cO_2^t[f^a,f^b,f^c] =\int_{2\pi\ep^2 \lfloor \f{t}{2\pi\ep^2}\rfloor }^{t}e^{i\f{\om}{2\ep^2}\tau} \cO_2^t[f^a,f^b,f^c]d\tau .
\end{split}
\end{equation*}
holds. The Similar identity holds for $\partial_t \cO_2^t$. Hence we have 
\begin{equation}\label{eqo2}
\begin{split}
\cJ_2 =\partial_t(\cF_z\cE_3)&-\int_{2\pi\ep^2 \lfloor \f{t}{2\pi\ep^2}\rfloor }^t  \sum_{\om\neq 0} e^{i\f{\om}{2\ep^2}\tau}\sum_{(p,q,r,s)\in\Gamma_\om}\Pi_{p}\Big( (\partial_t \cO_2^t)[F_q^a,F_r^b,F_s^c] \\
&+\cO_{2}^t[\partial_tF_q^a,F_r^b,F_s^c] + \cO_{2}^t[F_q^a,\partial_tF_r^b,F_s^c] + \cO_{2}^t[F_q^a,F_r^b,\partial_tF_s^c] \Big) d\tau
\end{split}
\end{equation}
where $\cE_3=\cE_3(t,x,z)$ is defined by
\begin{equation}\label{E3}
\begin{split}
\cF_z\cE_3(t,x,\z)&:=\int_{2\pi\ep^2 \lfloor \f{t}{2\pi\ep^2}\rfloor }^{t}  \sum_{\om\neq 0} e^{i\f{\om}{2\ep^2}\tau}\sum_{(p,q,r,s)\in\Gamma_\om} \Pi_p \cO_{2}^t[F_q^a,F_r^b,F_s^c](x,\z)d\tau.
\end{split}
\end{equation} 
We now estimate the contribution of each term of the right hand side of \eqref{eqo2}.
First, we treat $\cF_z\cE_3$. By \eqref{suppl} and the definition of $\cO_{2}^t$, we can define the multiplier $m$ (See \cite[Lemma 2.3]{MS}) that appears in $\cO_{2}^t$ by
\[ m(\e,\ka):=\varphi(\al t^{\f{1}{6}}\e \ka)\varphi((10T/\al)^{-\f{1}{6}}\e) \varphi((10T/\al)^{-\f{1}{6}}\ka). \]
Then we have the bound $\|\cF_{\e, \ka}m\|_{L^1(\R^2)} \lesssim (1+t)^{\delta_0}$, where $\delta_0$ is an arbitrarily small constant. To prove this, we use the same argument as  \cite[Remark 3.5]{MST}. First, by changing variables and a straightforward calculation, we have
\[ \|\cF_{\e,\ka}m\|_{L^1(\R^2)}=\|I(S)\|_{L^1(\R^2)} \]
where
\[ I(S,y_1,y_2)=\int_{\R^2} e^{iy_1\e}e^{iy_2\ka}\varphi(S\e\ka)\varphi(\e)\varphi(\ka)d\e d\ka, \qquad S \simeq \al^\f{2}{3}T^{\f{1}{2}} \]
Then it holds uniformly in $S\in(0,\infty)$ that
\begin{equation*}
\begin{split}
&|(1+y_1+y_2)I(S,y)|\lesssim 1 , \qquad  |y_1y_2I(S,y)| \lesssim 1+\log(1+S), \\
&(y_1^2+y_2^2+y_1^2y_2^2)|I(S,y)|\lesssim 1+S\log(1+S)
\end{split}
\end{equation*}
and by interpolation, for any small $\delta_0>0$ there exists some $k>\f{1}{2}$ such that 
\[ |I(S,y)|\lesssim (1+S)^{\delta_0}(1+y_1^2)^k(1+y_2^2)^k.\]
We have the aforementioned bound.\\

\noindent
Hence we apply \cite[Lemma 2.3]{MS} with $(p,q,r,s)=(2,2,\infty, \infty)$ and get, for  $t\ge T/4$,
\begin{equation}
\begin{split}\label{eqo2tt0}
&\|\cO_2^t[f^a,f^b,f^c]\|_{L_\z}^2 = \|\cF_{\z}\cO_2^t[f^a,f^b,f^c]\|_{L_z}^2 \\
	&\lesssim (1+|t|)^{\f{\delta}{100}}\min_{\s \in \fS^3}\| f^{\s(a)}\|_{L_z^2} \|e^{it\partial_z^2/2} f^{\s(b)}\|_{L_z^\infty} \| e^{it\partial_z^2/2} f^{\s(c)}\|_{L_z^\infty}.
\end{split}
\end{equation}
Since
\begin{equation}\label{dis}
	 \| e^{it\partial_z^2/2 } f \|_{L_z^\infty} \lesssim |t|^{-\f{1}{2}}\|f\|_{L_z^1} = |t|^{-\f{1}{2}}\|\langle z \rangle^{-\f{9}{10}}  \langle z \rangle^{\f{9}{10}}  f\|_{L_z^1}    \lesssim|t|^{-\f{1}{2}}\| \langle z \rangle^{\f{9}{10}}f \|_{L_z^2}
\end{equation}
holds, we have
\begin{equation}\label{eqo2tt}
	\|\cO_2^t[f^a,f^b,f^c]\|_{L_\z^2}	\lesssim (1+|t|)^{-1+\f{\delta}{100}}\min_{\sigma \in \fS^3}\| f^{\sigma(a)}\|_{L_z^2} \|  \langle z \rangle^{\f{9}{10}}f^{\sigma(b)}\|_{L_z^2} \| \langle z \rangle^{\f{9}{10}} f^{\sigma(c)}\|_{L_z^2} .
\end{equation} 
Let $K \in L_{x,\z}^2$. It follows from H\"{o}lder's inequality, \eqref{eqo2tt}, Minkowski's inequality and the embedding $\Sigma_x^2 \hookrightarrow L^\infty_x$ that
\begin{equation}\label{dual1}
	\begin{split}
		&\langle K,\sum_{(p,q,r,s)\in\Gamma_0}   \Pi_p\cO_2^t[F_q^a,F_r^b,F_s^c] \rangle_{L_{x,\z}^2} \le \sum_{(p,q,r,s)\in\Gamma_0}|	\langle K_p,  \cO_2^t[F_q^a,F_r^b,F_s^c]\rangle_{L_{x,\z}^2}| \\
		&\lesssim (1+|t|)^{-1+\f{\delta}{100}} \sum_{(p,q,r,s)\in\Gamma_0} \|K_p\|_{L_\bfx^2} \|F_q^a\|_{L_\bfx^2} \| \langle z \rangle^{\f{9}{10}} F_r^b\|_{L_x^\infty L_z^2}  \|  \langle z \rangle^{\f{9}{10}} F_s^c\|_{ L_x^\infty L_z^2}   \\
			&\lesssim  (1+|t|)^{-1+\f{\delta}{100}}\sum_{(p,q,r,s)\in\Gamma_0} \|K_p\|_{L_\bfx^2} \|F_q^a\|_{L_\bfx^2}    \langle r \rangle \|\langle z \rangle^{\f{9}{10}} F_r^b\|_{L_\bfx^2 }   \langle s \rangle \|  \langle z \rangle^{\f{9}{10}} F_s^c\|_{L_\bfx^2} .  \\
	\end{split}
\end{equation}
We use H\"{o}lder's inequality in $p$ and Young's inequality in $q,r,s$, then we have 
\begin{equation}\label{dual2}
	\begin{split}
	  &\sum_{(p,q,r,s)\in\Gamma_0} \|K_p\|_{L_\bfx^2} \|F_q^a\|_{L_\bfx^2}    \langle r \rangle \|\langle z \rangle^{\f{9}{10}} F_r^b\|_{L_\bfx^2 }   \langle s \rangle \|  \langle z \rangle^{\f{9}{10}} F_s^c\|_{L_\bfx^2 }   \\
	  &\lesssim \|K_p\|_{l_p^2L_\bfx^2} \Big\|\sum_{\substack{q,r,s \in \N_0\\ q-r+s=p}}  \|F_q^a\|_{L_\bfx^2}   \langle r \rangle \|\langle z \rangle^{\f{9}{10}} F_r^b\|_{L_\bfx^2 }   \langle s \rangle \|  \langle z \rangle^{\f{9}{10}} F_s^c\|_{L_\bfx^2} \Big\|_{l_p^2} \\
	  &\lesssim \|K\|_{L_\bfx^2} \|F_q^a\|_{l_q^2L_\bfx^2}  \Big\|  \langle r \rangle \|\langle z \rangle^{\f{9}{10}} F_r^b\|_{L_\bfx^2 } \Big\|_{l_r^1} \Big\|  \langle s \rangle \|  \langle z \rangle^{\f{9}{10}} F_s^c\|_{L_\bfx^2} \Big\|_{l_s^1} \\
	  &\lesssim \|K\|_{L_\bfx^2}  \|F^a\|_{L_\bfx^2} \|F^b\|_S\|F^c\|_S.  
	\end{split}
\end{equation}
The duality argument yields
\begin{equation*}
	 \Big\|\sum_{(p,q,r,s)\in\Gamma_0}   \Pi_p\cO_2^t[F_q^a,F_r^b,F_s^c] \Big\|_{L_{x,\z}^2} \lesssim (1+|t|)^{-1+\f{\delta}{100}} \min_{\sigma \in \fS^3} \|F^{\sigma(a)}\|_{L_\bfx^2} \|F^{\sigma(b)}\|_S\|F^{\sigma(c)}\|_S.
\end{equation*}
On the other hand, it holds from \eqref{eqo2tt}, H\"{o}lder's inequality,  Minkowski's inequality and the embedding $\Sigma_x^2 \hookrightarrow L^\infty_x$ that
\begin{equation}\label{ot2}
	\begin{split}
		&\|\cO_2^t[F^a,F^b,F^c]\|_{L_{x, \z}^2}		\\
		&\lesssim (1+|t|)^{-1+\f{\delta}{100}}\Big\|\min_{\sigma \in \fS^3}\| F^{\sigma(a)}\|_{L_z^2} \|  \langle z \rangle^{\f{9}{10}}F^{\sigma(b)}\|_{L_z^2} \| \langle z \rangle^{\f{9}{10}} F^{\sigma(c)}\|_{L_z^2} \Big\|_{L_x^2} \\
		&\lesssim  (1+|t|)^{-1+\f{\delta}{100}} \min_{\sigma \in \fS^3}\| F^{\sigma(a)}\|_{L_\bfx^2} \|  \langle z \rangle^{\f{9}{10}}F^{\sigma(b)}\|_{ L_x^\infty L_z^2} \| \langle z \rangle^{\f{9}{10}} F^{\sigma(c)}\|_{L_x^\infty L_z^2}  \\
		&\lesssim  (1+|t|)^{-1+\f{\delta}{100}} \min_{\sigma \in \fS^3}\| F^{\sigma(a)}\|_{L_\bfx^2} \|  F^{\sigma(b)}\|_{S} \| F^{\sigma(c)}\|_{S}.
	\end{split}
\end{equation}
Hence we get
\begin{equation}
	\begin{split}
&\|\cF_z\cE_3\|_{L_{x,\z}^2} \\
&\lesssim \ep^2 \Big( \| \cO_2^t[F^a,F^b,F^c]\|_{L_{x,\z}^2} +\Big\|\sum_{(p,q,r,s)\in\Gamma_0} \Pi_p\cO_2^t[F_q^a,F_r^b,F_s^c] \Big\|_{L_{x,\z}^2} \Big)\\
&\lesssim \ep^2 (1+|t|)^{-1+\delta/100} \min_{\sigma \in \fS^3}\|  F^{\sigma(a)}\|_{L_\bfx^2} \| F^{\sigma(b)}\|_{S} \| F^{\sigma(c)}\|_{S}
	\end{split}
\end{equation}
and the following estimates  follow from \cite[Lemma 6.1]{MS}:
\[ \|\cF_z\cE_3\|_{S} \lesssim \ep^2 (1+|t|)^{-1+\delta/100} \|  F^{a}\|_{S} \| F^{b}\|_{S} \| F^{c}\|_{S}\]
\[ \|\cF_z\cE_3\|_{S^+} \lesssim \ep^2 (1+|t|)^{-1+\delta/100} \max_{\sigma \in \fS^3}\|  F^{\sigma(a)}\|_{S^+} \| F^{\sigma(b)}\|_{S} \| F^{\sigma(c)}\|_{S}  .\]
We next treat $\partial_t \cO_2^t $. Since
\begin{equation*}
\begin{split}
	\partial_t(e^{it\e\ka}\varphi(\al t^{\f{1}{6}} \e \ka)) &= i\al^{-1}t^{-\f{1}{6}} \psi(\al t^{\f{1}{6}} \e \ka) e^{it\e\ka} +\f{1}{6}t^{-1} \tilde{\psi}(\al t^{\f{1}{6}} \e \ka) e^{it\e\ka} \\
	\psi(s) &:=s\varphi(s), \qquad \tilde{\psi}(s):=s\varphi'(s) ,\qquad  s\in \R,
\end{split}
\end{equation*}
we have
\begin{equation*}
	\begin{split}
	 \Big\| \sum_{(p,q,r,s)\in \Gamma_0}\Pi_p &\partial_t\cO_2^t[F_q^a,F_r^b,F_s^c]\Big\|_{L_{x,\z}^2} \\ &\lesssim( \al^{-1}t^{-\f{7}{6}+\f{\delta}{100}}+ t^{-2+\f{\delta}{100}})  \min_{\sigma \in \fS^3}\|  F^{\sigma(a)}\|_{L_\bfx^2} \| F^{\sigma(b)}\|_{S} \| F^{\sigma(c)}\|_{S}	
	\end{split}
\end{equation*}
and
\[ \|\partial_t\cO_2^t[F^a,F^b,F^c]\|_{L_{x,\z}^2} \lesssim(\al^{-1}t^{-\f{7}{6}+\f{\delta}{100}}+ t^{-2+\f{\delta}{100}})  \min_{\sigma \in \fS^3}\|  F^{\sigma(a)}\|_{L_\bfx^2} \| F^{\sigma(b)}\|_{S} \| F^{\sigma(c)}\|_{S}.\]
Therefore the contribution of the second term in the right hand side of \eqref{eqo2} is acceptable:
\begin{equation*}
\begin{split}
\Big\| \int_{2\pi\ep^2 \lfloor \f{t}{2\pi\ep^2}\rfloor }^t \sum_{\om\neq 0} e^{i\f{\om}{2\ep^2}\tau} \Pi_p \partial_t\cO_2^t&[F_q^a,F_r^b,F_s^c]d\tau \Big\|_{S} \\
& \lesssim\ep^2 (\al^{-1}t^{-\f{7}{6}+1/100}+ t^{-2}) \|  F^{a}\|_{S} \| F^{b}\|_{S} \| F^{c}\|_{S}. 
\end{split}
\end{equation*}

\begin{equation*}
	\begin{split}
		\Big\| \int_{2\pi\ep^2 \lfloor \f{t}{2\pi\ep^2}\rfloor }^t \sum_{\om\neq 0} e^{i\f{\om}{2\ep^2}\tau} \Pi_p& \partial_t\cO_2^t[F_q^a,F_r^b,F_s^c]d\tau \Big\|_{S^+} \\
		& \lesssim\ep^2(\al^{-1}t^{-\f{7}{6}+1/100}+ t^{-2}) \max_{\sigma\in \fS^3}\|  F^{\sigma(a)}\|_{S^+} \| F^{\sigma(b)}\|_{S} \| F^{\sigma(c)}\|_{S}. 
	\end{split}
\end{equation*}

\noindent
Finally, The terms in the last line of \eqref{eqo2} are estimated by the same argument as above and the definition of $X_{T^*}$ norm. \\

\noindent
\underline{Estimate of $\cJ_1$} \quad For $t\ge\f{T}{4}$ we show
\begin{equation}\label{ot10}
\Big\|\sum_{\om\neq 0}e^{i\f{\om}{2\ep^2}t} \sum_{(p,q,r,s)\in\Gamma_\om}\cO_1^{t}[F_q^a,F_r^b,F_s^c]\Big\|_{L_\bfx^2} \lesssim \al^{\f{1}{13}}T^{-\f{17}{16}}  \min_{\sigma \in \fS^3}\|  F^{\sigma(a)}\|_{L_\bfx^2} \| F^{\sigma(b)}\|_{S} \| F^{\sigma(c)}\|_{S}.
\end{equation}
We first prove
\begin{equation}\label{ot11}
	\|\cO_1^{t}[f^a,f^b,f^c]\|_{L_{\z}^2} \lesssim \al^{\f{1}{13}}T^{-\f{17}{16}} \min_{\sigma \in \fS^3} \|  f^{\sigma(a)} \|_{L_z^2} \| \langle z \rangle ^{\f{9}{10}}f^{\sigma(b)} \|_{L_z^2} \|  \langle z \rangle ^{\f{9}{10}} f^{\sigma(c)}\|_{L_z^2}.
\end{equation}
Note that $\cF_z {\cI^t}=\cO_1^t+\cO_2^t$ holds and $\cI^t$ is estimated by H\"{o}lder in $z$ as
\begin{equation*}
	\begin{split}
		\|\cI^t[f^a,f^b,f^c]\|_{L_z^2} &\le \min_{\sigma \in \fS^3} \|f^{\sigma(a)}\|_{L_z^2} \|e^{it\partial_z^2/2}f^{\sigma(b)}\|_{L_z^\infty} \|e^{it\partial_z^2/2}f^{\sigma(c)}\|_{L_z^\infty}.
	\end{split}
\end{equation*}
So the same estimate as \eqref{eqo2tt0} holds for $\cO_1^t$. 
We denote $f=f^a$, $g=f^b$, $h=f^c$ and decompose
\[ g=g_1+g_2, \qquad g_1(z):=\varphi\Big(\f{\al z}{T^{\f{1}{6}}}\Big)g(z)\]
\[h=h_1+h_2, \qquad h_1(z):=\varphi\Big(\f{\al z}{T^{\f{1}{6}}}\Big)h(z)\]
where $\varphi \in C_0^\infty(\R)$, $\varphi(z)=1$ when $|z|\le1$ and $\varphi(z)=0$ when $|z|>2$. By estimating like \eqref{dis}, we can check 
\begin{equation}\label{inf1}
	\|e^{it\partial_z^2/2}f\|_{L_z^\infty} \lesssim_\beta R^{-\beta}|t|^{-\f{1}{2}}\|\langle z\rangle^{\f{9}{10}} f\|_{L_z^2}, \qquad \text{if } \text{supp}(f) \subset \{|z|\ge R \} 
\end{equation}
\begin{equation}\label{inf2}
	\|e^{it\partial_z^2/2}zf\|_{L_z^\infty} \lesssim_\gamma R^{1-\gamma}|t|^{-\f{1}{2}}\|\langle z \rangle^{\f{9}{10}} f\|_{L_z^2}, \qquad \text{if } \text{supp}(f) \subset \{|z|\le R\} 
\end{equation}
 hold for all $\beta, \gamma \in (0,\f{2}{5})$. By \eqref{inf1} we get
\begin{equation*}
 \|\cO_1^{t}[f,g_2,h]\|_{L_z^2} \lesssim T^{-1-\f{\beta}{6}+\f{\delta}{100}} \al^{\beta} \|  f \|_{L_z^2} \| \langle z \rangle^{\f{9}{10}} g\|_{L_z^2} \|  \langle z \rangle^{\f{9}{10}} h\|_{L_z^2}	
\end{equation*}
and the same estimate for $\cO_1^{t}[f,g_1,h_2]$. Take $\beta=\f{2}{5}-\f{1}{100}$.
Hence \eqref{ot11} is reduced to
\begin{equation}
		\|\cO_1^{t}[f,g_1,h_1]\|_{L_\z^2} \lesssim \al^{\f{1}{13}} T^{-\f{17}{16}} \|  f \|_{L_z^2} \| \langle z \rangle^{\f{9}{10}} g \|_{L_z^2} \|  \langle z \rangle^{\f{9}{10}}h\|_{L_z^2}.
\end{equation}
We integrate by part $\cO_1^t$ in the $\ka$ integral \eqref{ot1}:
\begin{equation}
\begin{split}
	\cO_1^t[f,g,h]:=&\int_{\R^2}(-it\e)^{-1}e^{it\e\kappa}\partial_\ka\Big[(1-\varphi(\al t^{\f{1}{6}}\e \ka ))\times \\
&\qquad \hat{f}(\z-\e)\overline{\hat{g}(\z-\e-\kappa) } \hat{h}(\z-\kappa)\Big]d\e d\kappa,
\end{split}
\end{equation}
Since
\[ (t\e)^{-1} \partial_\ka(1-\varphi(\al t^{\f{1}{6}} \e \ka ))= \al t^{-\f{5}{6}}  \varphi'(\al t^{\f{1}{6}} \e \ka ), \]
the contribution of the term when the $\ka$-derivative falls on  the multiplier is bounded by $\al t^{-\f{11}{6} +\f{\delta}{100}} \|f\|_{L^2} \|\langle z \rangle^{\f{9}{10}}  g\|_{L_z^2} \|\langle z \rangle^{\f{9}{10}}h\|_{L_z^2}$.

\noindent
Suppose the $\ka$-derivative falls on $g$. we note that on the support of integration, then we have
$|\ka|\lesssim \al^{-\f{1}{6}} T^{\f{1}{6}}$
 and 
$ |\e|\gtrsim \al^{-1} t^{-\f{1}{6}}|\ka|^{-1} \gtrsim \al^{-\f{5}{6}} T^{-\f{1}{3}}$. 
Especially, there exists $c>0$ such that $ 2\al t^{\f{1}{6}} |\e \ka|  \le c|\e|\al^{\f{5}{6}} T^{\f{1}{3}}   $. Then 
\begin{equation*}
\begin{split}
&\Big\|(it)^{-1}\int_{\R^2}\e^{-1}e^{it\e\kappa} (1-\varphi(\al t^{\f{1}{6}}\e \ka )) \hat{f}(\z-\e)\overline{\hat{(zg_1)}(\z-\e-\kappa) } \hat{h_1}(\z-\kappa) d\e d\kappa \Big\|_{L_{\z}^2} \\
\lesssim & \al^{\f{5}{6} } T^{-\f{2}{3}}\Big\| \int_{\R^2}   \f{1-\varphi(c \al^{\f{5}{6}}T^{\f{1}{3}}\e )}{c\al^{\f{5}{6}}T^{\f{1}{3}} \e }  (1-\varphi(\al t^{\f{1}{6}} \ka ))\e^{-1}e^{it\e\kappa} \times \\
&\qquad \qquad \qquad \qquad \qquad \qquad \qquad  \hat{f}(\z-\e)\overline{\hat{(zg_1)}(\z-\e-\kappa) } \hat{h_1}(\z-\kappa) d\e d\kappa \Big\|_{L_\z^2} \\
\lesssim &  \al^{\f{5}{6}} T^{-\f{2}{3} +\f{\delta}{100}} \|f\|_{L_z^2} \|e^{it\partial_z^2/2}zg_1\|_{L_z^\infty} \|e^{it\partial_z^2/2}h_1\|_{L_z^\infty} \\
\lesssim &  \al^{\f{1}{13}} T^{-\f{17}{16}} \|f\|_{L_z^2} \|\langle z \rangle^{\f{9}{10}}g\|_{L_z^2} \|\langle z \rangle^{\f{9}{10}}h\|_{L_z^2} .
\end{split}
\end{equation*}
In the above estimate, we use the same argument in \eqref{eqo2tt} with
\[m(\e, \ka) = \f{1-\varphi(c \al^{\f{5}{6}}T^{\f{1}{3}}\e )}{c\al^{\f{5}{6}}T^{\f{1}{3}} \e }(1-\varphi(\al t^{\f{1}{6}}\e \ka))\varphi((10T/\al)^{-\f{1}{6}}\e) \varphi((10T/\al)^{-\f{1}{6}}\ka)\]
and \eqref{inf2} with $\gamma=\f{2}{5}-\f{1}{100}$. Changing variables in $\e$, $\ka$, and iterating the same argument, we obtain \eqref{ot11}. Furthermore, the duality argument as \eqref{dual1} and \eqref{dual2} yields
\begin{equation}\label{ot100}
	\Big\| \sum_{(p,q,r,s)\in\Gamma_0}\cO_1^{t}[F_q^a,F_r^b,F_s^c]\Big\|_{L_\bfx^2} \lesssim \al^{\f{1}{13}}T^{-\f{17}{16}}  \min_{\sigma \in \fS^3}\|  F^{\sigma(a)}\|_{L_\bfx^2} \| F^{\sigma(b)}\|_{S} \| F^{\sigma(c)}\|_{S}.
\end{equation}
On the other hand, using \eqref{ot11} and the same estimate as \eqref{ot2} we also have
\begin{equation}
	\Big\| \cO_1^{t}[F^a,F^b,F^c]\Big\|_{L_\bfx^2} \lesssim \al^{\f{1}{13}}T^{-\f{17}{16}}  \min_{\sigma \in \fS^3}\|  F^{\sigma(a)}\|_{L_\bfx^2} \| F^{\sigma(b)}\|_{S} \| F^{\sigma(c)}\|_{S}.
\end{equation}
Hence we get \eqref{ot10}.\\

Now we conclude the proof of Lemma \ref{fast}. We define $\cE_2:=\partial_t\cE_3$, where $\cE_3$ is given by \eqref{E3}, and  $\tilde{\cE_1}$ as the sum of $\cO_1^t$ and the remaining terms in \eqref{eqo2}. Then they satisfy the claim.
\end{proof}

\subsection{Proof of Proposition \ref{nlest1} and \ref{nlest2}}
 Before to prove Proposition \ref{nlest2}, we need to consider the error between two resonant nonlinearities, $\cN_0^t$ and $\cR$. Since the error has nothing to do with $\ep$, it is sufficient to follow \cite[Section 4.3]{MS}. 
 \begin{prop}[\cite{MS} Proposition 4.6]
 Assume $N\ge7$ and let $t\ge1$. Then we have
 \begin{equation}\label{NR}
 	\|\cN_0^t[F,G,H]-\f{\pi}{t}\cR[F,G,H]\|_{Z} \lesssim (1+t)^{-\f{33}{32}}\|F\|_S\|G\|_S\|H\|_S
 \end{equation}
\begin{equation}\label{NR2}
	\|\cN_0^t[F,G,H]-\f{\pi}{t}\cR[F,G,H]\|_{S} \lesssim (1+t)^{-\f{33}{32}}\|F\|_{S+}\|G\|_{S+}\|H\|_{S+}.
\end{equation}
 \end{prop}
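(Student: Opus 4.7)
The key observation is that $\Fav$ acts purely on the $x$-variable — it commutes with every operation involving $z$ — so the difference between $\cN_0^t$ and $(\pi/t)\cR$ is entirely a 1D dispersive phenomenon in the $z$-variable. Indeed, by the same partial-Fourier computation in $z$ that produced the phase $e^{it\e\ka}$ in $\cF_z\cI^t$, one obtains
\[
\cF_z\cN_0^t[F,G,H](x,\z) = \int_{\R^2} e^{it\e\ka}\,\Fav\!\bigl(\hat F(x,\z-\e),\hat G(x,\z-\e-\ka),\hat H(x,\z-\ka)\bigr)\,d\e\,d\ka,
\]
while $\cF_z\cR[F,G,H](x,\z) = \Fav(\hat F(x,\z),\hat G(x,\z),\hat H(x,\z))$. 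The claim therefore reduces to a stationary phase estimate for a 2D oscillatory integral with non-degenerate signature-$(1,1)$ phase $\e\ka$, whose stationary point is $(\e,\ka)=(0,0)$ and whose contribution there is precisely $(\pi/t)\Fav(\hat F(\z),\hat G(\z),\hat H(\z))$ (the constant $\pi$ rather than $2\pi$ reflecting the Fourier normalization of the paper).

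I would follow the scheme of \cite{MS} Section 4.3. Introduce a smooth cutoff $\chi$ isolating the region $|t\e\ka|\lesssim 1$ and split the integral into a near-diagonal and a far part. On the near region, Taylor expand the amplitude in $(\e,\ka)$ around $(0,0)$: the zeroth-order term, integrated against $\chi(t\e\ka)e^{it\e\ka}$, reproduces $(\pi/t)\cR$ via the standard formula $\int e^{it\e\ka}\chi(t\e\ka)\,d\e\,d\ka = \pi/t + O(t^{-\infty})$, while the remainder carries $\partial_\z$-derivatives of the amplitude with an improved scale factor. On the far region, iterated integration by parts using $\partial_\ka e^{it\e\ka} = it\e\,e^{it\e\ka}$ (and the symmetric step in $\e$) gains a factor of $t^{-1}$ per step and transfers $\z$-derivatives onto the amplitude. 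Balancing the two contributions at cutoff scale $|\e\ka|\sim t^{-1+\sigma}$ for a small $\sigma>0$ delivers any decay rate strictly better than $t^{-1}$, in particular $t^{-33/32}$.

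The main obstacle is to promote these pointwise-in-$(x,\z)$ stationary phase bounds into $Z$, $S$, and $S^+$ estimates. Here the weight $(1+|\z|^2)^2$ in the $Z$ norm and the norms $\|zF\|_{L^2_\bfx}$, $\|(1-\partial_z^2)^4 F\|_S$, $\|zF\|_S$ appearing in $S$ and $S^+$ all translate via Fourier into $\z$-regularity of $\hat F,\hat G,\hat H$ and thereby absorb the $\z$-derivatives produced by the Taylor/IBP argument; the regularity requirement $N\ge 7$ supplies enough $x$-regularity so that the $\Sigma_x^s$ product estimates and the $\Fav$-bounds (cf.\ \cite{CR}, \cite{K}) close with the highest-derivative factor measured in $S$ or $S^+$ and the others in $Z$. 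Since $\cH$ commutes with all operations involving $z$, the argument of \cite{MS} carries over verbatim after substituting the Landau-level resonant symbol for the partial harmonic one, and the remaining work is the routine bookkeeping of how many $\z$-derivatives fall on each of the three factors.
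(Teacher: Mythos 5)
Your overall route coincides with the paper's: the paper does not reprove this statement at all, but observes that the error $\cN_0^t-\f{\pi}{t}\cR$ is independent of $\ep$ and is purely a one--dimensional dispersive matter in $z$ (the $x$--structure enters only through trilinear boundedness of $\Fav$), and then cites \cite[Proposition 4.6, Section 4.3]{MS}, adding only the remark that the $\delta$--independent rate $(1+t)^{-33/32}$ is what that proof actually yields. Your Fourier--side identity $\cF_z\cN_0^t[F,G,H](x,\z)=\int e^{it\e\ka}\Fav(\hat F(\z-\e),\hat G(\z-\e-\ka),\hat H(\z-\ka))\,d\e\,d\ka$ and $\cF_z\cR=\Fav(\hat F,\hat G,\hat H)$ is correct and is exactly the right reduction. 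However, two steps of your sketch are genuinely flawed. First, the ``standard formula'' $\int_{\R^2} e^{it\e\ka}\chi(t\e\ka)\,d\e\,d\ka=\pi/t+O(t^{-\infty})$ is false: a cutoff depending only on the product $t\e\ka$ localizes to an unbounded hyperbolic region, and for fixed $\e\neq0$ the $\ka$--integral scales like $|t\e|^{-1}$, so the remaining $\e$--integral diverges at both $\e\to0$ and $\e\to\infty$. The leading term cannot be extracted by freezing the amplitude against such a cutoff; one must either use separate localizations in $\e$ and $\ka$ combined with amplitude differences $\hat F(\z-\e)-\hat F(\z)$ (paid for by the $z$--weights in $S$, $S^+$), or equivalently the factorization $e^{it\partial_z^2/2}=M_tD_t\cF M_t$ of the free flow, which is how \cite{MS, MST} actually isolate the $t^{-1}$ resonant term. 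Your ``near region'' as written does not produce a well-defined main term, and this is the heart of the proposition, not a technicality.

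Second, the constant. With the conventions of this paper the quadratic phase is $e^{it\e\ka}$ (dispersion $-\f12\partial_z^2$), and the Fresnel/stationary-phase value of $\int e^{it\e\ka}A(\e,\ka)\,d\e\,d\ka$ is $\f{2\pi}{t}A(0,0)$, not $\f{\pi}{t}A(0,0)$; the $\pi/t$ of \cite{MS} arises because their dispersion has no factor $\f12$, giving phase $2t\e\ka$, and has nothing to do with the $\f{1}{2\pi}$ Fourier normalization (with that normalization the trilinear convolution formula carries no constants, as the paper's own expression for $\cF_z\cI^t$ shows). So your parenthetical justification of the constant is incorrect, and a careful proof must recompute it rather than attribute it to normalization --- indeed this is precisely the kind of discrepancy that a blind transcription of \cite[Section 4.3]{MS} can miss, and it directly affects which multiple of $\cR$ can be subtracted so that the remainder decays like $t^{-33/32}$. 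A smaller point: the assertion that the scheme ``delivers any decay rate strictly better than $t^{-1}$'' overstates what the norms allow; the gain is capped by the single $z$--weight (and the finitely many $z$--derivatives in $S^+$), which is exactly why the specific exponent $33/32$ and the hypothesis $N\ge7$ appear, though this does not endanger the stated rate.
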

In view of \cite[Proposition 4.6]{MS}, the decay rates in time depends on $\delta$. But in fact they obtained \eqref{NR} and \eqref{NR2} in the proof.

\begin{proof}[Proof of Proposition \ref{nlest1}]
Fix $\ep\in (0,1)$. We decompose $\cN^t$ as
\begin{equation}\label{decNf}
\begin{split}
\cN^t[F,G,H]=& \cN_0^t[F,G,H]  \\
 &+\sum_{\substack{A,B,C \in 2^{\N_0} \\ \max(A,B,C)\ge(T/\ep)^{1/6}}} \Big(\cN^t[Q_AF,Q_BG,Q_CH] - \cN_0^t[Q_AF,Q_BG,Q_CH] \Big)\\ 
 &+	\tilde{\cN^t}[Q_{\le (T/\ep)^{\f{1}{6}}}F,Q_{\le (T/\ep)^{\f{1}{6}}} G,Q_{\le (T/\ep)^{\f{1}{6}}} H].
\end{split}
\end{equation}
Apply Lemma \ref{high} and Lemma \ref{fast} as $\al=\ep$.
\end{proof}
\begin{proof}[Proof of Proposition \ref{nlest2}]
Refer to \cite[Section 4.4]{MS} and \cite[Section 3.3]{MST}.
We use Lemma \ref{high} and \ref{fast} as $\al=1$.
\end{proof}

\section{Proof of the main theorems}\label{proof}

\subsection{Modified wave operators}\label{mwo}

Before we prove the main theorems, we present the a priori estimates for $U^\ep$ and $W$.  
\begin{prop}\label{apriori}
	There exists an absolute constant $\al>0$ such that any initial data $\psi_0 \in S^+$ satisfying 
	\begin{equation}
		\|\psi_0\|_{S^+}\le \al
	\end{equation}
generates a global solution to \eqref{Uep} and it holds for any $T>0$ that
\begin{equation}
	\|U^{\ep}\|_{X_T^+}\le 2\al.
\end{equation}
 A similar statement holds for the solution to \eqref{pr} (in Section \ref{prs}).
\end{prop}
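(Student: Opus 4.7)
The plan is to run a bootstrap/continuity argument built on Lemma \ref{N0} and Propositions \ref{nlest1}--\ref{nlest2}. Standard Strichartz/energy arguments give local well-posedness of \eqref{Uep} in $S^+$ with the blow-up alternative $\limsup_{t\to \Tmax^-}\|U^\ep(t)\|_{S^+}=\infty$ when $\Tmax<\infty$; it therefore suffices to produce the stated uniform-in-$T$ a priori bound. Assume on $[0,T]\subset[0,\Tmax)$ the bootstrap hypothesis $\|U^\ep\|_{X_T^+}\le M$ with $M=4\al$, and aim to upgrade it to $\|U^\ep\|_{X_T^+}\le 2\al$ provided $\al$ is small.

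First I would handle the time-derivative bounds in $X_T^+$. These come directly from $\partial_t U^\ep=-i\cN^t[U^\ep,U^\ep,U^\ep]$ together with the pointwise analogue of Lemma \ref{N0} for $\cN^t$ (the argument for $\cN_0^t$ carries over, as the off-resonant piece $\tilde{\cN}^{t,\ep}$ satisfies the same pointwise trilinear estimates). Under the bootstrap one has $\|U^\ep(t)\|_{Z_t^\delta}\lesssim M$, $\|U^\ep(t)\|_S\lesssim (1+t)^\delta M$, and $\|U^\ep(t)\|_{S^+}\lesssim (1+t)^{5\delta}M$, giving $\|\partial_t U^\ep(t)\|_S\lesssim (1+t)^{-1+\delta}M^3$ and $\|\partial_t U^\ep(t)\|_{S^+}\lesssim (1+t)^{-1+5\delta}M^3$. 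Integrating in time then yields $\|U^\ep(t)\|_S\lesssim \al+(1+t)^\delta M^3$ and $\|U^\ep(t)\|_{S^+}\lesssim \al+(1+t)^{5\delta}M^3$, so that all the $S$, $S^+$, and time-derivative contributions to $\|U^\ep\|_{X_T^+}$ are bounded by $C(\al+M^3)$.

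The decisive and most delicate step is the uniform $Z$-norm bound, which requires the modified-scattering structure. Apply Proposition \ref{nlest2} with $\cN_*^t=\cN^t$ to split $\cN^t=\frac{\pi}{t}\cR+\cE^t$. On each dyadic interval $[T_0/2,T_0]\subset[1,T]$, compare $U^\ep$ with the auxiliary solution $V$ of $i\partial_s V=\frac{\pi}{s}\cR[V,V,V]$ with $V(T_0/2)=U^\ep(T_0/2)$. Since $\cR$ acts pointwise in $\z$ and $\Fav$ preserves $\|f\|_{\Sigma_x^1}^2$ (gauge invariance plus the $\cH_0$-commutation evident from the integrand defining \eqref{Fav}), the flow of $\cR$ preserves each $(1+|\z|^2)^2\|\hat{F}(\cdot,\z)\|_{\Sigma_x^1}^2$ and hence the $Z$-norm: $\|V(T_0)\|_Z=\|U^\ep(T_0/2)\|_Z$. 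The difference $U^\ep-V$ satisfies a perturbed cubic equation with forcing $\cE^t$; using the pointwise bound $\|\cE_1(t)\|_Z\lesssim (1+t)^{-34/33}M^3$ from Proposition \ref{nlest2} and integrating by parts in time against $\cE_2=\partial_t\cE_3$, together with a short Gronwall argument to absorb the cubic perturbation, one obtains $\|U^\ep(T_0)-V(T_0)\|_Z\lesssim T_0^{-\eta}M^3$ for some $\eta>0$. Summing over dyadic scales gives $\|U^\ep(t)\|_Z\lesssim \al+M^3$, uniformly in $t$.

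Collecting the estimates yields $\|U^\ep\|_{X_T^+}\le C(\al+M^3)\le C\al(1+64C\al^2)$; choosing $\al$ small enough that this is at most $2\al$ closes the bootstrap and, via the blow-up alternative, produces the global solution with the claimed bound. The argument for the solution $W$ to \eqref{pr} is identical, with $\cN_0^t$ in place of $\cN^t$ (Lemma \ref{N0} then applies directly and Proposition \ref{nlest2} is used with $\cN_*^t=\cN_0^t$). The main obstacle is the $Z$-norm step: one must transfer the integrable $\cE$ bounds and the $\cR$-conservation into a pointwise-in-$t$ estimate, and do so with an absolute smallness threshold $\al$ that does not degenerate as $\delta\to 0$---exactly the reason the sharpened version of Proposition \ref{nlest2} noted in the remark following its statement is required.
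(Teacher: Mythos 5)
Your overall route is the paper's route: the paper simply invokes the bootstrap scheme of \cite[Proposition 6.2]{MST}, i.e. a continuity argument in $X_T^+$ in which the $Z$-norm is controlled through the decomposition $\cN^t=\frac{\pi}{t}\cR+\cE^t$ of Proposition \ref{nlest2} together with the fact that the flow of \eqref{fr} leaves the $Z$-density invariant, while the $S$, $S^+$ and $\partial_t$ components are allowed the slow $t^{\delta}$-type growth. Your variant of the $Z$-step (comparison on each dyadic block with an auxiliary solution of $i\partial_s V=\frac{\pi}{s}\cR[V,V,V]$, instead of differentiating the $Z$-density and using that the resonant contribution cancels) is workable, with two caveats: the conservation of $(1+|\z|^2)^2\|\hat F(\cdot,\z)\|_{\Sigma_x^1}^2$ under the $\cR$-flow follows from the resonance condition on $\Gamma_0$ (equivalently from the $\theta$-average), not merely from ``gauge invariance plus $\cH_0$-commutation''; and Proposition \ref{nlest2} only provides bounds for $\int_{T/2}^{T}\cE_i\,dt$ at dyadic endpoints, so your Gronwall comparison needs subinterval versions (obtainable by rerunning the integration by parts for $\cE_2=\partial_t\cE_3$, but this should be said).

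The genuine gap is in your $S$/$S^+$ closing step. You bound $\|\partial_t U^\ep(t)\|_S\lesssim (1+t)^{-1+\delta}M^3$ and then ``integrate in time'' to get $\|U^\ep(t)\|_S\lesssim \al+(1+t)^{\delta}M^3$; but $\int_0^t(1+s)^{-1+\delta}ds\simeq \delta^{-1}(1+t)^{\delta}$, so the bootstrap closes only under $C\delta^{-1}M^3\le\al$, i.e. $\al\lesssim\delta^{1/2}$. (Replacing this by a Gronwall argument on the resonant term $\frac{\pi}{t}\cR$ instead produces growth $t^{CM^2}$, which is compatible with the weight $(1+t)^{-\delta}$ in $X_T$ only if $C\al^2\le\delta$.) Either way the smallness threshold degenerates as $\delta\to0$, which is precisely what the statement ``absolute constant $\al$'' is meant to exclude: the Remark following Proposition \ref{nlest2} explains that the sharpened, $\delta$-free rates ($T^{-34/33}$ for $\|\cE_1\|_Z$ and for $\|\int_{T/2}^T\cE_i\,dt\|_S$ under the $S^+$ hypothesis, $t^{-1/10}$ for $\cE_3$, $T^{-1/13}$ in Lemma \ref{high}) were proved exactly so that $\al$ can be chosen independently of $\delta$, and this independence is then used in Proposition \ref{APW}, where $\delta$ is taken smaller than $\al^2$. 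You flag the $\delta$-uniformity issue, but only for the $Z$-step; your $S$/$S^+$ estimate as written does not use the improved blockwise bounds of Proposition \ref{nlest2} at all (the offending $(1+t)^{-1+\delta}$ comes from the resonant term fed by the bootstrap hypothesis on $\|U^\ep\|_S$, not from $\cE$), so the sharpened estimates do not repair it in the form you gave. To match the paper you must estimate $\int_{T/2}^{T}\cN^t\,dt$ block by block, treating the resonant part and the errors separately with constants uniform in $\delta$, rather than integrating a pointwise-in-time bound.
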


\begin{proof}
	See \cite[Proposition 6.2]{MST}. In the proof we apply Proposition \ref{nlest2}. Because Proposition \ref{nlest2} has some better estimates than  \cite[Proposition 4.1]{MS}, we can give $\al$ as an absolute constant.
\end{proof}
\quad\\
\begin{proof}[Proof of Theorem \ref{main1}]
We define a complete metric space
\[\fA\label{key}:=\{ F\in C^1([1,\infty),S ): \|F\|_{\fA}\le \al \}\]
\begin{equation}
\begin{split}
	\|F\|_{\fA}:=\sup_{t\ge 1}\{& \ep^{-\f{1}{20}}(1+|t|)^{\f{1}{16}}\|F(t)\|_{L^2} + \ep^{-\f{1}{20}}(1+|t|)^{\f{1}{20}+10\delta}\|F(t)\|_Z \\
	&+ \ep^{-\f{1}{20}}(1+|t|)^{\f{1}{20}+9\delta}\|F(t)\|_S  +(1+|t|)^{1-10\delta}\|\partial_tF(t)\|_S \\
	&+ \ep^{-\f{1}{20}}(1+|t|)^{\f{1}{20}}\|F(t)\|_{S^+}  +(1+|t|)^{1-10\delta}\|\partial_tF(t)\|_{S^+} \}
\end{split}
\end{equation}
where $\al$ is a positive constant.  We show the map
\[\Phi(F)(t):= i\int_{t}^\infty \Big(\cN^s[F+W,F+W,F+W]-\cN_0^s[W,W,W]\Big)ds \]
is on $\fA$ and a contraction for some $\al$. \\

\noindent
 First, if we take $\al>0$ so that satisfying Proposition \ref{apriori}, the solution to \eqref{pr} with $W(0)=\psi_0$ exists globally and satisfies for all $t>0$ that
 \begin{equation}\label{apw0}
 \begin{split}
 	\|W(t)\|_Z &+(1+|t|)^{-\delta}\|W(t)\|_S + (1+|t|)^{1-3\delta}\|\partial_t W(t)\|_S \\
 	&+(1+|t|)^{-5\delta}\|W(t)\|_{S^+} + (1+|t|)^{1-7\delta}\|\partial_t W(t)\|_{S^+} \lesssim \al.
 \end{split}
 \end{equation}
we decompose
\begin{equation*}
\begin{split}
	&\cN^s[F+W,F+W,F+W]-\cN_0^s[W,W,W] \\
	&=  \cN^s[F,F,F] + 	2\cN^s[F,F,W] +	\cN^s[F,W,F] \\
	&\quad + 2\cN^s[F,W,W] + \cN^s[W,F,W] +  \tilde{\cN^s}[W,W,W]
\end{split}
\end{equation*}
and denote
\[ \cA[F,W]:=  \cN^s[F,F,F] + 	2\cN^s[F,F,W] +	\cN^s[F,W,F] \]
\[\cB[F,W]:= 2\cN^s[F,W,W] + \cN^s[W,F,W].\]
It is sufficient to show for any $F$, $F_1$, $F_2\in \fA$ that  
\begin{equation}\label{estA}
	\Big\|\int_t^\infty \cA[F(s),W(s)]ds\Big\|_{\fA} \lesssim  \al^3
\end{equation}
\begin{equation}\label{estB}
	\Big\|\int_t^\infty \cB[F(s),W(s)]ds\Big\|_{\fA} \lesssim \al^3
\end{equation}
\begin{equation}\label{estA2}
	\Big\|\int_t^\infty (\cA[F_1(s),W(s)] -  \cA[F_2(s),W(s)])ds\Big\|_{\fA} \lesssim  \al^2 \|F_1-F_2\|_{\fA}
\end{equation}
and show
\begin{equation}\label{estN}
	\Big\|\int_t^\infty \tilde{\cN^t}[W(s),W(s),W(s)]ds\Big\|_{\fA} \lesssim \al^3.
\end{equation}
We first prove \eqref{estB}. The same argument can be applied to \eqref{estA} and \eqref{estA2}. By Proposition \ref{nlest1}, \eqref{N01}, \eqref{apw0} and the definition of $Z_t^\delta$, $Y_t$ and $\fA$ norms, we have
\begin{equation}\label{SL2}
	\begin{split}
		& \Big\|\int_t^\infty \cB[F,W]ds\Big\|_{L_\bfx^2} \\
		&\lesssim \int_{t}^\infty (\| \cN_0^s[F,W,W] \|_{L_\bfx^2} + \|\cN_0^s[W,F,W] \|_{L_\bfx^2} )ds  \\
		&\qquad \qquad  + \Big\|\int_t^\infty (2\tilde{\cN}^s[F,W,W]  +  \tilde{\cN}^s[W,F,W] ) ds \Big\|_{S} \\
		&\lesssim \int_t^\infty (1+s)^{-1}  \|F\|_{L_\bfx^2}\|W\|_{Z_s^\delta}^2 ds + \ep^{\f{1}{20}}(1+|t|)^{-\f{1}{16}} \al^2\|F\|_{Y_t} \\
		&\lesssim \ep^{\f{1}{20}}(1+|t|)^{-\f{1}{16}} \al^3.
	\end{split}
\end{equation}
Similarly, 
\begin{equation*}
\begin{split}
		& \Big\|\int_t^\infty \cB[F,W]ds\Big\|_{Z} \lesssim \int_{t}^\infty ( \| \cN_0^s[F,W,W] \|_Z + \|\cN_0^s[W,F,W] \|_Z)ds   + \ep^{\f{1}{20}}(1+|t|)^{-\f{1}{16}}\al^3.
			\end{split}
	\end{equation*}
By interpolation in \cite[Lemma 2.1 (2)]{MS}:
\begin{equation}\label{interpo}
	\|F\|_{Z}\lesssim \|F\|_{L^2}^{\f{1}{8}} \|F\|_{S}^{\f{7}{8}}, 
\end{equation}
\eqref{N01} and \eqref{N03}, we have
\begin{equation*}
	\begin{split}
		\|\cN_0^s[F,W,W] \|_Z&\lesssim  \|\cN_0^s[F,W,W] \|_{L_\bfx^2}^{\f{1}{8}}  \|\cN_0^s[F,W,W] \|_S^{\f{7}{8}} \\
		&\lesssim (1+s)^{-1} (\|F\|_{L_\bfx^2}\|W\|_{Z_s^\delta}^2)^{\f{1}{8}} (\|W\|_{Z_s^\delta}^2   \|F\|_S + \|W\|_{Z_s^\delta} \|W\|_{S}  \|F\|_{Z_s^\delta} )^{\f{7}{8}}\\
		&\lesssim \ep^{\f{1}{20}} (1+s)^{-1-\f{33}{640}} \|F\|_{\fA}\al^2 \lesssim \ep^{\f{1}{20}} (1+s)^{-\f{21}{20}-10\delta} \al^3.
	\end{split}
\end{equation*}
Hence we have
\begin{equation}\label{SZ}
	\begin{split}
			\Big\|\int_t^\infty \cB[F,W]ds\Big\|_{Z}\lesssim \ep^{\f{1}{20}}(1+|t|)^{-\f{1}{20}-10\delta} \al^3.
\end{split}
\end{equation}
Next, by \eqref{N03} and \cite[Lemma 2.2]{MS}, we have
\begin{equation}\label{SS}
	\begin{split}
		& \Big\|\int_t^\infty \cB[F,W]ds\Big\|_{S} \\
		&\lesssim \int_{t}^\infty (\| \cN_0^s[F,W,W] \|_S + \|\cN_0^s[W,F,W] \|_S) ds  + \ep^{\f{1}{20}}(1+|t|)^{-\f{1}{16}} \|F\|_{Y_t} \al^2 \\
		&\lesssim \int_t^\infty (1+s)^{-1} (\|W\|_{Z_t^\delta}^2   \|F\|_S + \|W\|_{Z_t^\delta} \|W\|_{S}  \|F\|_{Z_t^\delta} )ds + \ep^{\f{1}{20}}(1+|t|)^{-\f{1}{16}} \|F\|_{Y_t}\al^2 \\
		&\lesssim \ep^{\f{1}{20}}(1+|t|)^{-\f{1}{20}-9\delta} \al^3
	\end{split}
\end{equation}
and
\begin{equation}\label{SdS}
\|\cB[F,W]\|_{S} \lesssim (1+|t|^{-1})\|F\|_S \|W\|_S^2 \lesssim (1+|t|)^{-\f{21}{20}-7\delta} \al^3.
\end{equation}
Furthermore, by \eqref{N04} we also have
\begin{equation}\label{SS+}
	\begin{split}
		& \Big\|\int_t^\infty \cB[F,W]ds\Big\|_{S^+} \\
		&\lesssim \int_{t}^\infty (\| \cN_0^s[F,W,W] \|_{S^+} + \|\cN_0^s[W,F,W] \|_{S^+}) ds  + \ep^{\f{1}{20}}(1+|t|)^{-\f{1}{16}} \|F\|_{Y_t^+}\al^2 \\
		&\lesssim \int_t^\infty \Big[(1+s)^{-1} (\|W\|_{Z_t^\delta}^2   \|F\|_{S^+} + \|W\|_{Z_t^\delta} \|W\|_{S^+}  \|F\|_{Z_t^\delta} ) \\
		& \qquad +(1+s)^{-1+2\delta} (\|W\|_{Z_t^\delta} \|W\|_S   \|F\|_S +  \|W\|_{S}^2  \|F\|_{Z_t^\delta} ) \Big]ds + \ep^{\f{1}{20}}(1+|t|)^{-\f{1}{16}} \al^3 \\
		&\lesssim \ep^{\f{1}{20}}(1+|t|)^{-\f{1}{20}} \al^3
	\end{split}
\end{equation}
and
\begin{equation}\label{SdS+}
\begin{split}
	\|\cB[F,W]\|_{S^+} &\lesssim (1+|t|^{-1})(\|F\|_{S^+} \|W\|_S^2 + \|F\|_S \|W\|_{S^+} \|W\|_S)\lesssim (1+|t|)^{-\f{21}{20}} \al^3.	
\end{split}
\end{equation}

\noindent
We finally consider \eqref{estN}. From Proposition \ref{nlest1}, we have

\begin{equation}\label{tN1}
	\Big\| \int_t^\infty \tilde{\cN^s}[W(s),W(s),W(s)]ds \Big\|_{S^+} \lesssim \ep^\f{1}{20} (1+t)^{-\f{1}{16}} \|W\|_{Y_t^+}^3 \lesssim  \ep^\f{1}{20} (1+t)^{-\f{1}{16}} \al^3.
\end{equation}
By \cite[Lemma 2.2]{MS}, \eqref{N04} and \eqref{apriori}, we have
\begin{equation}\label{tN2}
	\begin{split}
		&\|  \tilde{\cN^s}[W(t),W(t),W(t)]\|_{S^+} \\
		&\lesssim  (1+|t|)^{-1} \|W(t)\|_{S^+}\|W(t)\|_{S}^2 + (1+|t|)^{-1+2\delta} \|W(t)\|_{S}^2\|W(t)\|_{Z_t^\delta}\\
		&\lesssim (1+|t|)^{-1+7\delta}\al^3.	
	\end{split}
\end{equation}
By the estimates from \eqref{SL2} to \eqref{tN2} we obtain \eqref{estB}.\\

If we choose $\al$ sufficiently small, we construct the desired $F\in C([1,\infty),S^+). $ Let $U^\ep(t):=W(t)-F(t)$, we have \eqref{main1-1} and \eqref{main1-2} for $t\ge1$. To extend \eqref{main1-2} to $[0,1]$, we use Lemma \ref{3}.

\end{proof}

\subsection{Strong magnetic confinement limit}

\begin{proof}[Proof of Theorem \ref{main2}]
The statement (i) can be proved by changing $U^\ep$ and $W$ in the proof of \ref{main1}. That is, we define  $\fA$ as above and consider the map
\[\Psi^\ep(F)(t):= i\int_{t}^\infty \Big(\cN_0^s[F-U^\ep,F-U^\ep,F-U^\ep]+\cN^s[U^\ep,U^\ep,U^\ep]\Big)ds, \]
and use 
\begin{equation*}
\begin{split}
		\|U^\ep(t)\|_Z &+(1+|t|)^{-\delta}\|U^\ep(t)\|_S + (1+|t|)^{1-3\delta}\|\partial_t U^\ep(t)\|_S  \\
		&+(1+|t|)^{-5\delta}\| U^\ep(t) \|_{S^+} + (1+|t|)^{1-7\delta}\|\partial_t U^\ep(t)\|_{S^+} \lesssim \al,
\end{split}
\end{equation*}
where $U^\ep$ is the solution to \eqref{Uep} with the data $\psi_0$.\\

We now show the statement (ii). By using Lemma \ref{apriori} again, a solution to \eqref{pr} $W\in C([0,\infty),S^+)$ with the data $\psi_0$ satisfies
\begin{equation}\label{apw2}
	\begin{split}
		\|W(t)\|_Z &+(1+|t|)^{-\delta}\|W(t)\|_S + (1+|t|)^{1-3\delta}\|\partial_t W(t)\|_S \\
		&+(1+|t|)^{-5\delta}\|W(t)\|_{S^+} + (1+|t|)^{1-7\delta}\|\partial_t W(t)\|_{S^+} \lesssim \al.
	\end{split}
\end{equation} 
Here,  we use the following propositions.

\begin{prop}\label{th3s} 	Let $\phi \in  C([0,\Tmax), S^+)$ be a maximal solution to (\ref{lmt0})  with  $\phi(0)=\psi_0\in S^+$. Then for any $T\in (0,\Tmax)$ there exists a solution to (\ref{ep0}) $\psiep \in  C([0, T], S^+)$ with  $\psi(0)=\psi_0$ for all sufficiently small $\ep>0$, which satisfies
		\begin{equation}
			\lim_{\ep \to +0}\| e^{it\cH/\ep^2}\psiep-\phi\|_{L^\infty ([0,T],S^+)}=0.
		\end{equation}
		Furthermore, there exist $ \ep_1=\ep_1(\|\psi_0\|_{S^+},T)>0$ and $C=C(\|\psi_0\|_{\Sigma},T)>0$ such that for any $\ep \in (0 , \ep_1]$,
		\begin{equation}
			\|e^{it\cH/\ep^2}\psiep-\phi\|_{L^\infty([0,T],S)}\le C\ep .
		\end{equation}
\end{prop}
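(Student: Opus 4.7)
The plan is to compare the filtered solution $U^\ep := e^{it\cD^\ep}\psiep$, which obeys $i\partial_t U^\ep = \cN^{t,\ep}[U^\ep, U^\ep, U^\ep]$, with the filtered limit $W := e^{-it\partial_z^2/2}\phi$, which obeys $i\partial_t W = \cN_0^t[W, W, W]$. Since $U^\ep(0) = W(0) = \psi_0$, setting $R^\ep := U^\ep - W$ gives $R^\ep(0) = 0$, and the goal is to bound $R^\ep$ on $[0,T]$; the claimed estimates on $\|e^{it\cH/\ep^2}\psiep - \phi\|$ translate to bounds on $R^\ep$ up to factors controlled on bounded time intervals. I would first establish local $S^+$-existence of $U^\ep$ on some $[0,T_0]$ with $T_0$ independent of $\ep$ by contraction in $C([0,T_0],S^+)$, using that $\cN^{t,\ep}$ is a bounded trilinear form on the Banach algebra $S^+$ uniformly in $t$ and $\ep$ on bounded time intervals (via $\cN^t[F,G,H]=e^{it\cD_0^\ep}(e^{-it\cD_0^\ep}F\cdot e^{it\cD_0^\ep}\overline{G}\cdot e^{-it\cD_0^\ep}H)$ and the isometry of $e^{\pm it\cD_0^\ep}$ on the Hermite-Sobolev components).

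The Duhamel formulation gives
\[
R^\ep(t) = -i\int_0^t \tilde{\cN}^{\tau,\ep}[W,W,W]\,d\tau \;-\; i\int_0^t \bigl(\cN^{\tau,\ep}[U^\ep,U^\ep,U^\ep]-\cN^{\tau,\ep}[W,W,W]\bigr)\,d\tau.
\]
The second integrand is trilinear in $R^\ep$ and $W$, contributing a term bounded by $C(T,\|W\|_{L^\infty_{[0,T]}S^+})\int_0^t \|R^\ep(\tau)\|_S\,d\tau$ after using the $S$-continuity of $\cN^{\tau,\ep}$ and the a priori bound on $\|W\|_{L^\infty_{[0,T]}S^+}$ from the local well-posedness of \eqref{lmt0} in \cite{Fr,K}; this will be absorbed by Gronwall. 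The core estimate is for the first integral, which exploits the fast oscillations in $\tilde{\cN}^{\tau,\ep}$. Expanding via the Hermite decomposition,
\[
\tilde{\cN}^{\tau,\ep}[W,W,W] = \sum_{\omega \ne 0} e^{i\omega \tau/(2\ep^2)}\, K_\omega(\tau), \qquad K_\omega(\tau) := \sum_{(p,q,r,s)\in \Gamma_\omega} \Pi_p\, \cI^\tau[W_q,W_r,W_s],
\]
and integrating by parts in $\tau$ yields
\[
\int_0^t e^{i\omega\tau/(2\ep^2)} K_\omega(\tau)\,d\tau = \frac{2\ep^2}{i\omega}\Bigl[e^{i\omega\tau/(2\ep^2)}K_\omega(\tau)\Bigr]_0^t - \frac{2\ep^2}{i\omega}\int_0^t e^{i\omega\tau/(2\ep^2)}\partial_\tau K_\omega(\tau)\,d\tau,
\]
gaining a factor $\ep^2/|\omega|$. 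Summability in $\omega$ is guaranteed by the $\Sigma_0^N$-regularity built into $S$, with $\partial_\tau K_\omega$ controlled via the equation $i\partial_\tau W = \cN_0^\tau[W,W,W]$ and the trilinear bound on $\cN_0^\tau$. Summing and applying Gronwall on $[0,T]$ delivers $\|R^\ep\|_{L^\infty_{[0,T]} S}\le C(T,\|\psi_0\|_\Sigma)\ep$.

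For the $S^+$-convergence without rate, I would approximate $\psi_0\in S^+$ by a sequence $\psi_0^{(n)}$ in a denser, more regular subspace where the same integration-by-parts argument produces a quantitative $S^+$-rate $\ep^{a_n}\to 0$, and then combine this with uniform-in-$\ep$ $S^+$-bounds on $U^\ep$ (obtained by bootstrap from the local existence and the difference estimate) via a diagonal argument. The main obstacle I anticipate is the bootstrap itself: extending $\psi^\ep$ from $[0,T_0]$ up to the full interval $[0,T]$ requires an a priori $S^+$-bound on $U^\ep$ that must be closed against the comparison estimate, since the $S^+$-bound on $W$ over $[0,T]$ and the quantitative $S$-rate together force $\|U^\ep\|_{S^+}$ to remain close to $\|W\|_{S^+}$ for $\ep$ small enough, but making this simultaneous closure rigorous (especially the control of $\partial_\tau K_\omega$ in $S^+$ and the $t$-growth of the $z$-weighted norm under the conjugation by $e^{it\cD^\ep}$) is the delicate technical step.
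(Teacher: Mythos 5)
Your overall architecture coincides with the paper's: the paper proves this proposition by invoking and adapting \cite[Theorem 1.5, Proposition 5.1]{K}, and that argument is exactly the one you reconstruct --- compare the filtered flows $U^\ep=e^{it\cD^\ep}\psiep$ and $W=e^{-it\partial_z^2/2}\phi$ with the same data, exploit the $e^{i\omega t/2\ep^2}$ oscillation of the non-resonant part by integrating in time to gain powers of $\ep$, close with Gronwall on $[0,T]$, and treat the top norm by approximating the data by more regular data (the paper takes $\phi_{0,N}=P_{\le N}\phi_0$ and uses $\|\partial_z^k\phi_{0,N}\|_{S^+}\lesssim N^k\|\phi_0\|_{S^+}$, $\|\phi_0-\phi_{0,N}\|_{S}\lesssim N^{-8}\|\phi_0\|_{S^+}$, together with the commutator $[z,e^{it\partial_z^2/2}]=-it\partial_z$ in place of Strichartz-type norms).

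There is, however, a concrete gap at the step where the $\ep$-gain is produced. You claim $\partial_\tau K_\omega$ is ``controlled via the equation $i\partial_\tau W=\cN_0^\tau[W,W,W]$ and the trilinear bound on $\cN_0^\tau$.'' That accounts only for the chain rule through $W$: since $K_\omega(\tau)=\sum_{(p,q,r,s)\in\Gamma_\omega}\Pi_p\,\cI^\tau[W_q,W_r,W_s]$ depends on $\tau$ explicitly through the propagators inside $\cI^\tau$, the derivative also produces the term with symbol $i\e\ka\, e^{i\tau\e\ka}$, i.e.\ a loss of two $z$-derivatives which is not controlled by any $\cN_0^\tau$ estimate (compare the $(\partial_t\cO_2^t)$ term in the paper's Lemma \ref{fast}, where it is tamed by the frequency cutoff $Q_{\le(T/\al)^{1/6}}$; you impose no such cutoff). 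For the $S$-rate this is repairable because $W\in S^+$ supplies the spare $(1-\partial_z^2)^4$ regularity, but it must be invoked explicitly; at the $S^+$ level it cannot be absorbed at all, and this derivative loss is precisely what forces the regularization $\phi_{0,N}=P_{\le N}\phi_0$ with a balance of $N$ against $\ep$ in the paper (following \cite{K}), yielding $S^+$-convergence without a rate and the rate $\ep$ only in $S$. Your proposal flags this region (together with the uniform-in-$\ep$ $S^+$ bound on $U^\ep$ needed to continue the solution to all of $[0,T]$) as ``the delicate technical step'' but leaves it unresolved, so as written the quantitative part of your argument is incomplete exactly where the proposition's content lies.
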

This proposition is an application of \cite[Theorem 1.5]{K} with $\sigma=1$ and $V\equiv 0$, and follows from the next lemma.

\begin{lemma}\label{3}
	Suppose $\psiep_0, \phi_0 \in S^+$ satisfy
	\begin{equation}\label{hyiv}
		\lim_{\ep \to +0 }\| \psiep_0 -\phi_0 \|_{S^+} = 0. 
	\end{equation}
	Let $\psiep \in C([0,\Tmax^\ep),S^+)$ be a maximal solution to (\ref{ep}) with the data $\psiep_0$ and $\phi \in C([0,\Tmax),S^+)$ be a maximal solution to (\ref{lmt}) with the data $\phi_0$. Then for any small $\beta>0$, there exist $\ep_0=\ep_0(\phi_0, \beta)$, $T=T(\| \phi_0 \|_{S^+}) \in (0, \inf_{ \ep\in(0,\ep_0] }\{1,\Tmax^\ep, \Tmax \})$ and $C=C(\| \phi_0 \|_{S^+})>0$ such that
	
	\begin{equation}\label{3-1}
		\| e^{it\cH/\ep^2}\psiep-\phi \|_{L^\infty([0,T], S^+)}\le C(\beta +\| \psiep_0 - \phi_0 \|_{S^+})
	\end{equation}
	holds for all $\ep\in (0,\ep_0]$. Moreover, there exists $\ep_1=\ep_1(\| \phi_0 \|_{S})>0$ such that
	\begin{equation}\label{3-2}
		\| e^{it\cH/\ep^2}\psiep-\phi \|_{L^\infty([0,T],S)}\le C(\ep +\| \psiep_0 - \phi_0 \|_{S}) 
	\end{equation}
	holds for all $\ep \in (0,\ep_1]$.
\end{lemma}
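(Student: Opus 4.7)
I would begin by setting $\phiep := e^{it\cH/\ep^2}\psiep$. A direct computation then gives
\[
  i\partial_t \phiep = -\h\partial_z^2 \phiep + \Fav(\phiep) + \tilde{\cF}^\ep[\phiep], \qquad \phiep(0) = \psiep_0,
\]
where $\tilde{\cF}^\ep[\phiep]$ collects the non-resonant frequencies $\om\neq 0$ of the expansion
\[
  e^{it\cH/\ep^2}\big(|e^{-it\cH/\ep^2}\phiep|^2\, e^{-it\cH/\ep^2}\phiep\big) = \sum_{\om\in\Z} e^{it\om/(2\ep^2)}\sum_{(p,q,r,s)\in\Gamma_\om}\Pi_p\big(\phiep_q\,\overline{\phiep_r}\,\phiep_s\big),
\]
the $\om=0$ piece being exactly $\Fav(\phiep)$, cf.\ the decomposition \eqref{tildeN}. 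The remainder $R^\ep := \phiep - \phi$ then satisfies
\[
  i\partial_t R^\ep = -\h\partial_z^2 R^\ep + \big(\Fav(\phiep) - \Fav(\phi)\big) + \tilde{\cF}^\ep[\phiep], \qquad R^\ep(0) = \psiep_0 - \phi_0.
\]
Uniform-in-$\ep$ local well-posedness for \eqref{ep0} and \eqref{lmt0} in the algebra $S^+$, inherited from \cite{K}, supplies a common time $T = T(\|\phi_0\|_{S^+})$ on which both flows are bounded by $M := 2(\|\phi_0\|_{S^+}+1)$ whenever $\|\psiep_0 - \phi_0\|_{S^+}$ is small enough, together with the Lipschitz bound $\|\Fav(\phiep)-\Fav(\phi)\|_{S^+} \lesssim M^2\|R^\ep\|_{S^+}$ (and the same estimate in $S$).

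\medskip

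The main analytic step is a normal-form (integration-by-parts in time) on $\tilde{\cF}^\ep$. Denoting the level-set block by $\mathcal{G}_\om(\tau) := \sum_{(p,q,r,s)\in\Gamma_\om}\Pi_p(\phiep_q\overline{\phiep_r}\phiep_s)$, one has, for each $\om\neq 0$,
\[
  \int_0^t e^{i\om\tau/(2\ep^2)}\mathcal{G}_\om(\tau)\,d\tau = \frac{2\ep^2}{i\om}\Big(e^{i\om t/(2\ep^2)}\mathcal{G}_\om(t) - \mathcal{G}_\om(0)\Big) - \frac{2\ep^2}{i\om}\int_0^t e^{i\om\tau/(2\ep^2)}\partial_\tau \mathcal{G}_\om(\tau)\,d\tau.
\]
Because conjugation by $e^{it\cH/\ep^2}$ has removed the $\ep^{-2}\cH$-term, $\partial_t\phiep$ is bounded in $S$ by $CM^3$ uniformly in $\ep$, so each boundary and Duhamel piece above is $O(\ep^2 M^3/|\om|)$. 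Summing over $\om\neq 0$ converges thanks to the additional $(1-\partial_z^2)^4$ and $z$-weights built into $S^+$, which generate the needed $\ell^1(\om)$-decay on the level-set contributions. Combined with the Lipschitz estimate for $\Fav$ and a Gronwall inequality, this produces
\[
  \|R^\ep(t)\|_S \le C\big(\|R^\ep(0)\|_S + M^3\,\ep\big)e^{CM^2 T}, \qquad t\in [0,T],
\]
which is \eqref{3-2} for $\ep\in (0,\ep_1]$ with $\ep_1 = \ep_1(\|\phi_0\|_S)$ chosen so as to close an $S$-bootstrap on $\phiep$.

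\medskip

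The bound \eqref{3-1} cannot follow the same route: integration by parts consumes one degree of regularity and does not close in the top norm $S^+$. Here I would argue by approximation. Given $\beta > 0$, pick $\phi_0^{(N)}$ lying in a strictly smoother subspace of $S^+$ with $\|\phi_0 - \phi_0^{(N)}\|_{S^+} \le \beta$. At this elevated regularity the same normal-form scheme does close in $S^+$ and delivers a quantitative bound $\|\phi^{\ep,(N)} - \phi^{(N)}\|_{L^\infty([0,T], S^+)} \le C_N\,\ep$ on the smoothed solutions. Choosing $\ep_0 = \ep_0(\phi_0, \beta)$ so that $C_N\,\ep_0 \le \beta$, and invoking the $S^+$-continuous dependence on data of both \eqref{ep0} and \eqref{lmt0} (with constants depending on the data only through $M$ on the short time $T$), a triangle inequality then yields \eqref{3-1}.

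\medskip

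The principal obstacle will be the summability in $\om$ after the normal-form step: once $\partial_\tau$ has fallen on the trilinear block $\mathcal{G}_\om$, one factor becomes $\partial_t\phiep$, which by the evolution equation is itself trilinear in $\phiep$, so that ultimately one must bound a quintic expression $\sum_{\om\neq 0}|\om|^{-1}\|\partial_\tau\mathcal{G}_\om\|$ purely by the $S^+$-norm of $\phiep$. This is precisely the role played by the extra weights in $S^+$, but tracking the regularity loss carefully is where the technical work lies. A secondary point is that the density argument underlying \eqref{3-1} requires $S^+$-continuous dependence on data uniformly in $\ep$, which must be imported carefully from the local theory of \cite{K}.
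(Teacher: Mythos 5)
Your overall architecture is the same as the paper's: the paper proves this lemma by following \cite[Proposition 5.1]{K}, i.e. filtering by $e^{it\cH/\ep^2}$, splitting the nonlinearity into the resonant average $\Fav$ plus oscillatory remainder, gaining $\ep^2$ from the oscillation $e^{i\om t/(2\ep^2)}$ by integrating in time, regularizing the data (the paper takes $\phi_{0,N}=P_{\le N}\phi_0$, with $\|\partial_z^k\phi_{0,N}\|_{S^+}\lesssim N^k\|\phi_0\|_{S^+}$ and $\|\phi_0-\phi_{0,N}\|_S\lesssim N^{-8}\|\phi_0\|_{S^+}$) to compensate the derivative loss, and closing with Gronwall and continuous dependence; your approximation-plus-triangle-inequality argument for \eqref{3-1} is exactly this. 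The paper's additional modifications are only technical: it avoids the $L_t^4$ Strichartz-type norm of \cite{K} by a pure $L_t^\infty S^+$ product estimate, and uses the commutator $[z,e^{it\partial_z^2/2}]=-it\partial_z$ throughout.

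There are, however, two points where your justification is wrong or misdirected. First, the claimed $\ell^1(\om)$-summability does \emph{not} come from the weights $(1-\partial_z^2)^4$ and $z$ in $S^+$: the frequencies $\om=p-q+r-s$ are differences of Landau levels in the $x$-variables, so decay in $\om$ must be extracted from the Hermite--Sobolev regularity in $x$ (the $\Sigma_0^N$ component already contained in $S$), or avoided altogether by integrating the zero-mean periodic profile in $\theta=t/\ep^2$ over one period, as in \cite{Fr,K} and as in the paper's treatment of the fast oscillations; $z$-weights are simply irrelevant to this sum. The genuine role of the extra $z$-regularity in $S^+$ is the one you partly obscure in the preceding sentence: after the integration by parts, $\partial_t\phiep$ contains the linear term $\h\partial_z^2\phiep$, so it is \emph{not} bounded in $S$ by $CM^3$; it is bounded by $C(M+M^3)$ only because $M$ is an $S^+$-bound and $\partial_z^2$ maps $S^+$ into $S$ (two of the eight spare $z$-derivatives). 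Second, your claim that \eqref{3-2} closes without any regularization, with rate $\ep^{2}$ effectively, deviates from the paper, which runs the $P_{\le N}$ truncation for both estimates and obtains the rate $\ep$ precisely from balancing the $N^k$ growth of the smoothed data against the $N^{-8}$ truncation error (together with the error terms denoted $I_2,I_3$ in \cite{K}); your shortcut is plausible given the derivative surplus in $S^+$, but you must then verify that every boundary and quintic term produced by the normal form is controlled by $S^+\to S$ alone, including the uniform boundedness of $e^{it\cH/\ep^2}$ on $S$ and $S^+$, before asserting it.
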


\begin{proof}
It is sufficient to follow most of the proof of \cite[Proposition 5.1]{K},  but we need some modifications.\\

First, we approximate $\phi$ by the solution to \eqref{lmt0} with the initial data
\[\phi_{0,N}:= P_{\le N}\phi_0, \]
where $P_{\le N}$ is in Section \ref{preli} in this paper. 
We need this approximation to estimate the terms corresponding to $I_2$ and $I_3$ in the proof of \cite[Proposition5.1]{K}.\\

Second, we do not need $L_t^4\Sigma_{x,z}^{4,2}$ norm used in the proof of \cite[Proposition 5.1]{K}. By \eqref{normeq}, the commutator $[z, e^{it\partial_z^2/2}]=-it\partial_z$, H\"{o}lder's inequality, Minkowski's inequality and the embedding $\Sigma_x^2 \hookrightarrow L_x^\infty$ and $H_z^1 \hookrightarrow L_z^\infty$, we have
\begin{equation*}
	\begin{split}
		\|A_1 \|_{L_t^\infty([0,T], S^+)}&\lesssim  \| u_n^\ep\|_{L_t^\infty ([0,T],S^+)}( \| \psiep \|_{L_t^\infty ([0,T],S^+)}^2 +\| \tpsi_n \|_{L_t^\infty ([0,T],S^+) }^2 )T.
	\end{split}
\end{equation*}
Here, we use the same notation as the proof of \cite[Proposition 5.1]{K}. 
Note that we use $[z, e^{it\partial_z^2/2}]=-it\partial_z$ instead of \cite[Lemma 2.1]{K} throughout our proof. \\

Finally, it hold that
\[ \|\partial_z^k\phi_{0,N}\|_{S^+}\lesssim N^k\|\phi_0\|_{S^+} \quad \text{for} \quad k\in \N_0\]
and
\[  \|\partial_z^k\phi_{0,N}\|_{S}\lesssim \|\phi_0\|_{S^+} \quad \text{for}\quad  k\le8, \qquad  \| \phi_0-\phi_{0,N}\|_{S} \lesssim N^{-8}\|\phi_0\|_{S^+}. \]
 \quad\\

By the above modifications, we have \eqref{3-1} and \eqref{3-2} from the same argument as the proof of \cite[Proposition 5.1]{K}.
\end{proof}

\begin{prop}\label{APW}
	Suppose solutions to \eqref{pr} $W_1(t), W_2(t) \in C([0,\infty), S^+)$ satisfy \eqref{apw2} for some $\al>0$ and an arbitrarily small $\delta>0$. then the following estimates hold for $t\in [1,\infty)$\rm{:}
	\begin{equation}\label{dW}
		\|W_1(t)-W_2(t)\|_S \lesssim  \|W_1(1)-W_2(1)\|_S t^{C \al^2}
	\end{equation} 
	\begin{equation}\label{dW+}
		\|W_1(t)-W_2(t)\|_{S^+} \lesssim \|W_1(1)-W_2(1)\|_{S^+} t^{C \al^2}.
	\end{equation}
\end{prop}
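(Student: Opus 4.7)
I plan to set $V(t) := W_1(t) - W_2(t)$, which by the trilinearity of $\cN_0^t$ obeys the telescoping identity
\begin{equation*}
i\partial_t V = \cN_0^t[V,W_1,W_1] + \cN_0^t[W_2,V,W_1] + \cN_0^t[W_2,W_2,V].
\end{equation*}
The hypothesis \eqref{apw2} supplies the uniform background estimates $\|W_j(t)\|_{Z_t^\delta} \lesssim \al$ (since $\|W_j\|_Z \lesssim \al$ and $\langle t\rangle^{-\delta}\|W_j\|_S \lesssim \al$), together with the slightly lossy bounds $\|W_j(t)\|_S \lesssim \al(1+t)^\delta$ and $\|W_j(t)\|_{S^+} \lesssim \al(1+t)^{5\delta}$ for $j = 1,2$.

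For \eqref{dW}, I apply estimate \eqref{N03} from Lemma \ref{N0} to each of the three trilinear terms, placing the $S$-norm on the $V$-slot and the $Z_t^\delta$-norms on the two $W_j$-slots. This gives the differential bound $\|\partial_t V(t)\|_S \lesssim (1+t)^{-1}\al^2 \|V(t)\|_S$, which upon integration from $t=1$ produces
\begin{equation*}
\|V(t)\|_S \le \|V(1)\|_S + C\al^2 \int_1^t (1+s)^{-1}\|V(s)\|_S \, ds.
\end{equation*}
Gronwall's lemma then yields $\|V(t)\|_S \lesssim \|V(1)\|_S\,(1+t)^{C\al^2}$, which is \eqref{dW}.

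For \eqref{dW+}, I combine the same decomposition with \eqref{N04}. Placing $\|\cdot\|_{S^+}$ on the $V$-slot in the first right-hand side of \eqref{N04} contributes the favourable Gronwall term $(1+t)^{-1}\al^2\|V\|_{S^+}$. The second right-hand side of \eqref{N04} carries two $S$-factors and one $Z_t^\delta$-factor; using $\|W_j\|_S \lesssim \al(1+t)^\delta$, the embedding \eqref{emb} (so that $\|V\|_{Z_t^\delta} \lesssim \|V\|_S$), and the just-proved bound \eqref{dW}, this piece is dominated by
\begin{equation*}
\al^2(1+t)^{-1+4\delta}\|V(t)\|_S \lesssim \al^2 \|V(1)\|_S (1+t)^{-1+4\delta+C\al^2},
\end{equation*}
which is an integrable forcing in $t$. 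A second Gronwall application then delivers
\begin{equation*}
\|V(t)\|_{S^+} \lesssim \|V(1)\|_{S^+}(1+t)^{C\al^2} + \|V(1)\|_S (1+t)^{4\delta+2C\al^2},
\end{equation*}
and \eqref{dW+} follows after bounding $\|V(1)\|_S \le \|V(1)\|_{S^+}$. The main obstacle is purely bookkeeping: the additive $4\delta$ in the $S^+$ exponent must be absorbed into the $C\al^2$ stated in the proposition, which is legitimate since $\delta < 10^{-4}$ is a fixed small parameter — and in the applications to Theorem \ref{main2} can be chosen small enough relative to $\al^2$ so that the enlargement of the constant $C$ causes no difficulty.
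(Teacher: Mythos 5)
There is a genuine gap, and it sits exactly where the paper's proof does its real work. Estimates \eqref{N03} and \eqref{N04} carry a $\max_{\sigma\in\fS^3}$ on the right-hand side, not a $\min$: you are not free to ``place the $S$-norm on the $V$-slot''. Applying \eqref{N03} to $\cN_0^t[V,W_j,W_j]$ necessarily produces, besides the benign term $(1+t)^{-1}\|V\|_S\|W_j\|_{Z_t^\delta}^2$, the cross term $(1+t)^{-1}\|W_j\|_{S}\|V\|_{Z_t^\delta}\|W_j\|_{Z_t^\delta}$, in which the strong norm falls on $W_j$. Since \eqref{apw2} only gives $\|W_j(t)\|_S\lesssim\al(1+t)^{\delta}$, the crude bound $\|V\|_{Z_t^\delta}\lesssim\|V\|_S$ turns this into a Gronwall rate $\al^2(1+t)^{-1+\delta}$, whose exponential is $\exp(C\al^2 t^{\delta}/\delta)$ — superpolynomial in $t$, not the claimed $t^{C\al^2}$. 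So your differential inequality $\|\partial_tV\|_S\lesssim(1+t)^{-1}\al^2\|V\|_S$ is not justified, and the whole $S$-estimate \eqref{dW}, on which your $S^+$ step also leans, does not follow. The same unaddressed cross term (now with $\|W_j\|_{S^+}\lesssim\al(1+t)^{5\delta}$) appears in the first right-hand side of \eqref{N04}.

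The missing idea, which is the paper's route, is to first control the difference in $L^2$: estimate \eqref{N01} does carry a $\min$, so there the $L^2$-norm can be put on $V$, and Gronwall (after the change of variables $t\mapsto e^t$) gives $\|V\|_{L^2}\lesssim\|V(1)\|_{L^2}\,t^{C\al^2}$. One then feeds this into the $S$-estimate through the interpolation \eqref{interpo}, $\|V\|_{Z}\lesssim\|V\|_{L^2}^{1/8}\|V\|_{S}^{7/8}$, and closes the resulting inequality by a bootstrap; the $(1+t)^{\delta}$ loss from $\|W_j\|_S$ is absorbed because the $L^2$ factor decays relative to the ansatz, and finally $\delta$ is taken smaller than $\al^2$. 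Only after \eqref{dW} is established this way does the $S^+$ bound go through essentially as you sketch, with the cross terms in \eqref{N04} dominated via the proven $S$-difference bound. Without the $L^2$ step and the interpolation/bootstrap, the argument cannot reach the polynomial growth rate $t^{C\al^2}$.
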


\begin{proof}
	Let $\tilde{W}_j(t):= W_j(e^t)$, $\tilde{W}_j$ solves
	\begin{equation*}
		i\partial_t \tilde{W}_j(t)=e^t\cN_0^{e^t}[\tilde{W}_j(t), \tilde{W}_j(t), \tilde{W}_j(t)]
	\end{equation*}
	\begin{equation}\label{tW}
		\tW_j(t)=\tW_j(0)-i\int_0^t e^s\cN_0^{e^s}[\tilde{W}_j(s), \tilde{W}_j(s), \tilde{W}_j(s)] ds.
	\end{equation}
Taking $L_\bfx^2$ norm and applying \eqref{N01}, we get for some $C_1>0$ that
\begin{equation}\label{L2W}
	\begin{split}
		\| \tW_1(t)-\tW_2(t) \|_{L^2}  &\le \| \tW_1(0)-\tW_2(0) \|_{L^2} \\
		&\quad + C\int_0^t \f{e^s}{1+e^s} \| \tW_1(s) - \tW_2(s) \|_{L^2} (\|\tW_1(s)\|_{Z_{e^s}^\delta}^2 + \|\tW_s(s)\|_{Z_{e^s}^\delta}^2) ds\\
		&\le \| \tW_1(0)-\tW_2(0)\|_{L^2}  +C_1\al^2 \int_{0}^t  \|  \tW_1(s)-\tW_2(s) \|_{L^2} ds.  \\
	\end{split}
\end{equation}
By Gronwall's lemma, we have 
\begin{equation}\label{L2W2}
	\|\tW_1(t)-\tW_2(t)\|_{L^2}  \le  \|\tW_1(0)-\tW_2(0)\|_{L^2} e^{C_1\al^2 t}.
\end{equation}
On the other hand, by taking $S$ norm in both sides of \eqref{tW} and applying \eqref{N03} and \eqref{interpo}, we have
\begin{equation}\label{SW}
	\begin{split}
		&\|\tW_1(t)-\tW_2(t)\|_{S} \\
		&\le \|\tW_1(0)-\tW_2(0)\|_S  +C \int_0^t  \f{e^s}{1+e^s}\Big[\|\tW_1(s)-\tW_2(s)\|_{S}(\|\tW_1\|_{Z_{e^s}^\delta}^2+ \|\tW_2\|_{Z_{e^s}^\delta}^2)  \\
		&\qquad  + \|\tW_1(s)-\tW_2(s)\|_{Z_{e^s}^{\delta}}(\|\tW_1\|_{S}+ \|\tW_2\|_{S})(\|\tW_1\|_{Z_{e^s}^\delta}+ \|\tW_2\|_{Z_{e^s}^\delta})\Big] ds \\
		&\le \|\tW_1(0)-\tW_2(0)\|_S \\
		&\qquad  +C_1\al^2 \int_0^t \Big[ \|\tW_1(s)-\tW_2(s)\|_{S} +e^{\delta s} \| \tW_1(s)-\tW_2(s)\|_{L_\bfx^2}^\f{1}{8} \| \tW_1(s)-\tW_2(s)\|_{S}^{\f{7}{8}} \Big] ds  , \\
	\end{split}
\end{equation} 
where we replace $C_1>2$ so that \eqref{SW} and 
\begin{equation}\label{interpo2}
	\|F\|_{L^2} \le C_1 \|F\|_S, \quad \forall F\in S
\end{equation}
hold. Let $F(t):= e^{-79(C_1^2\al^2+\delta)t}\|\tW_1(t)-\tW_2(t)\|_{S}$. Then there exists $T>0$ such that 
\begin{equation}\label{apSW}
	F(t)\le C_1F(0)e^{(C_1^2\al^2+\delta)t}
\end{equation}
holds for all $t\in [0,T]$. Multiplying both sides of \eqref{SW} by $e^{-79(C_1^2\al^2+\delta) t}$ and using \eqref{L2W2}, \eqref{interpo2} and  \eqref{apSW}, we have for $t\in [0,T]$ that 
\begin{equation}\label{SW2}
	\begin{split}
		 F(t)&\le F(0)  + e^{-79(C_1^2\al^2+\delta )t}C_1\al^2 \int_0^t \Big[ e^{79(C_1^2\al^2+\delta )s} F(s) \\
		&\qquad +e^{\delta s} (\| \tW_1(0)-\tW_2(0)\|_{L_\bfx^2}e^{C_1\al^2s})^{\f{1}{8}} (F(s)e^{79(C_1^2\al^2+\delta)s})^{\f{7}{8}} \Big] ds  \\
		&\le F(0)  + C_1\al^2 \int_0^t  F(s) ds + C_1^2\al^2F(0)\int_0^\infty e^{-5(C_1^2\al^2)s} ds  \\
		&\le 2F(0) + C_1\al^2 \int_0^t  F(s) ds  . \\
	\end{split}
\end{equation} 
Gronwall's lemma yields
\begin{equation}\label{apSW2}
	F(t)\le 2F(0)e^{C_1\al^2t}.
\end{equation}
Hence by the usual bootstrap argument \eqref{apSW} holds for all $t\in [0,\infty)$. 
Since $\al$ is an  absolute constant and  $\delta$ is an arbitrarily small constant, we can take $\delta$ smaller than $\al^2$, which implies \eqref{dW}. \\

We next prove \eqref{dW+} by the same strategy. 
By \eqref{N04}, $S \hookrightarrow Z$, \eqref{apSW} and $S^+ \hookrightarrow S$, we obtain for some $C_2>2$ that
\begin{equation}\label{SW+}
	\begin{split}
		&\|\tW_1(t)-\tW_2(t)\|_{S^+} \\
		&\le \|\tW_1(0)-\tW_2(0)\|_{S^+}  +C\int_0^t  \Big[\|\tW_1(s)-\tW_2(s)\|_{S^+}(\|\tW_1\|_{Z_{e^s}^{\delta}}^2+ \|\tW_2\|_{Z_{e^s}^{\delta}}^2)  \\
		&\qquad  + \|\tW_1(s)-\tW_2(s)\|_{Z_{e^s}^{\delta}}(\|\tW_1\|_{S^+}+ \|\tW_2\|_{S^+})(\|\tW_1\|_{Z_{e^s}^{\delta}}+ \|\tW_2\|_{Z_{e^s}^{\delta}}) \\
		&\qquad + e^{2\delta s}\Big(\|\tW_1(s)-\tW_2(s)\|_{S}(\|\tW_1\|_{S}+ \|\tW_2\|_{S})(\|\tW_1\|_{Z_{e^s}^{\delta}}+ \|\tW_2\|_{Z_{e^s}^{\delta}})  \\
		&\qquad  +\|\tW_1(s)-\tW_2(s)\|_{Z_{e^s}^{\delta}}(\|\tW_1\|_{S}+ \|\tW_2\|_{S})^2\Big)\Big] ds \\
		&\le \|\tW_1(0)-\tW_2(0)\|_{S^+} +C\al^2 \int_0^t \Big[  \|\tW_1(s)-\tW_2(s)\|_{S^+}  +e^{5\delta s} \| \tW_1(s)-\tW_2(s)\|_{S}    \Big] ds \\
		&\le C_2 e^{C_2\al^2t}\|\tW_1(0)-\tW_2(0)\|_{S^+} +C_2\al^2 \int_0^t  \|\tW_1(s)-\tW_2(s)\|_{S^+} ds .
	\end{split}
\end{equation} 
Let $F_+(t):=e^{-C_2\al^2t} \|\tW_1(s)-\tW_2(s)\|_{S^+}$, we have
\[  F_+(t)\le C_2F_+(0) + C_2\al^2 \int_0^t F_+(s) ds.\]
By Gronwall's lemma, we have \eqref{dW+}. 
\end{proof}
\quad\\

We return to  the proof of Theorem \ref{main2}. Applying Proposition \ref{th3s} to $\psiep=e^{-it\cD^\ep}U^\ep$ and $\phi=e^{it\partial_z^2/2}W$, it holds
\begin{equation}\label{dFW}
	\lim_{\ep\to+0}\|U^\ep-W\|_{L^\infty([0,1],S^+)}=0.
\end{equation}
From Proposition \ref{APW} and the property of $W^\ep$, we have for $t\ge1$ that 
\begin{equation*}
\begin{split}
	\|U^\ep(t)-W(t)\|_{S^+} &\le 	\|U^\ep(t)-W^\ep(t)\|_{S^+} + 	\|W^\ep(t)-W(t)\|_{S^+} \\
	& \lesssim \ep^{\f{1}{20}} (1+t)^{-\f{1}{20}} + 	\|W^\ep(1)-W(1)\|_{S^+} (1+t)^{C\al^2}\\
	 &\lesssim C_{\ep,\psi_0}(1+t)^{C\al^2} .
\end{split}
\end{equation*}
where $C_{\ep,\psi_0}:=C(\ep^{\f{1}{20}}+\|W^\ep(1)-W(1)\|_{S^+})$. From \eqref{dFW} and the property of $W^\ep$ we obtain 
\begin{equation*}
	\begin{split}
		C_{\ep,\psi_0} \lesssim \ep^{\f{1}{20}} +(\|W^\ep(1)-U^\ep(1)\|_{S^+} +\|U^\ep(1)-W(1)\|_{S^+})  \to 0 \text{\quad as \quad} \ep\to+0.
	\end{split}
\end{equation*} 
Moreover, by  Proposition \ref{APW} there exists $\ep_1=\ep_1(\al)>0$ such that it holds for all $\ep\in (0,\ep_1)$ that

\begin{equation}\label{dFW2}
	\|U^\ep-W\|_{L^\infty([0,1],S)} \lesssim \ep.
\end{equation}
Hence we also have for $t\ge1$ that
\begin{equation*}
	\begin{split}
		\|U^\ep(t)-W(t)\|_{S} &\le 	\|U^\ep(t)-W^\ep(t)\|_{S} + 	\|W^\ep(t)-W(t)\|_{S} \\
		& \lesssim \ep^{\f{1}{20}} (1+t)^{-\f{1}{20}} + 	\|W^\ep(1)-W(1)\|_{S} (1+t)^{C\al^2}  \\
		& \lesssim \ep^{\f{1}{20}} +(\|W^\ep(1)-U^\ep(1)\|_{S} +\|U^\ep(1)-W(1)\|_{S}) (1+t)^{C\al^2} \\
		&\lesssim \ep^{\f{1}{20}} (1+t)^{C\al^2}.
	\end{split}
\end{equation*}
\end{proof}
 
 \section*{Acknowledgements}
 \noindent
 The author is indebted to Kenji Nakanishi, for useful advice for this work. \\
 \noindent
 The author is supported by JST, the establishment of university fellowships towards the creation of science technology innovation, Grant Number JPMJFS2123.

\bibliographystyle{abbrv}
\footnotesize{
\bibliography{Gapprox}
}

\end{document}